\newtheorem{remark}[theorem]{Remark}
\newcommand{\R}{{\mathbb R}}
\newcommand{\C}{{\mathbb C}}
\newcommand{\be}{\begin{eqnarray}}
\newcommand{\ben}{\begin{eqnarray*}}
\newcommand{\en}{\end{eqnarray}}
\newcommand{\enn}{\end{eqnarray*}}
\newcommand{\ba}{\backslash}
\newcommand{\pa}{\partial}
\newcommand{\ov}{\overline}
\newcommand{\curl}{{\rm curl\,}}
\newcommand{\eps}{\epsilon}
\newcommand{\om}{\omega}
\newcommand{\hx}{\hat{x}}
\newcommand{\bbS}{\mathbb{S}^2}
\newcommand{\bE}{\mathbf{E}}
\newcommand{\bH}{\mathbf{H}}
\title{Inverse electromagnetic source scattering problems with multi-frequency sparse phased and phaseless far field data}
\author{Xia Ji\thanks{LSEC, Academy of Mathematics and Systems Science, Chinese Academy of Sciences, Beijing 100190, China ({\tt jixia@lsec.cc.ac.cn}).}
\and
Xiaodong Liu\thanks{Institute of Applied Mathematics, Academy of Mathematics and Systems Science, Chinese Academy of Sciences, 100190 Beijing, China({\tt xdliu@amt.ac.cn}).}
}
\begin{document}

\maketitle

\begin{abstract}
This paper is concerned with uniqueness, phase retrieval and shape reconstruction methods for solving inverse electromagnetic source scattering problems with multi-frequency sparse phased or phaseless far field data.
With the phased data, we show that the smallest strip containing the source support with the observation direction as the normal can be uniquely determined by the multi-frequency far field pattern at a single observation direction. A phased direct sampling method is also proposed to reconstruct the strip.
The phaseless far field data is closely related to the outward energy flux, which can be measured more easily in practice. We show that the phaseless far field data is invariant under the translation of the sources, which implies that the location of the sources can not be uniquely recovered by the data.
To solve this problem, we consider simultaneously the scattering of magnetic dipoles with one fixed source point and at most three scattering strengths. With this technique, a fast and stable phase retrieval approach is proposed based on a simple geometrical result which provides a stable reconstruction of a point in the plane from three distances to the given points. A novel phaseless direct sampling method is also proposed to reconstruct the strip. The phase retrieval approach can also be combined with the phased direct sampling method to reconstruct the strip. Finally, to obtain a source support, we just need to superpose the indicators with respect to the sparse observation directions.
Extended numerical examples in three dimensions are conducted with noisy data, and the results further verify the effectiveness and robustness of
the proposed phase retrieval approach and direct sampling methods.

\vspace{.2in} {\bf Keywords:}
Electromagnetic source scattering, phaseless far field data, uniqueness, phase retrieval, direct sampling methods.

\vspace{.2in} {\bf AMS subject classifications:}
35R30, 35P25, 78A46

\end{abstract}
\section{Introduction}
In this paper, we consider the inverse source problem for the Maxwell system. Such a problem arises from many applications such as  antenna synthesis \cite{Balanis} and biomedical imaging \cite{ABF}. A specific example is the magnetoencephalography (MEG) \cite{DFK,FKM} in which it is desired to determine the location of electric activity within the brain from the induced magnetic fields outside the head. Motivated by these significant applications, the inverse source problem has been exhaustively studied in the literature \cite{ACTV, AM, AMR, ABF, BN, BLT, BLT2, BC, Bojarski, DFK, DML, DW, EV, FKM, HKP, HR, KW, MDZ, MKB, MZ, PD, Valdivia, WMGL, WSGLL}.
The primary difficulty is that the inverse source problem at a fixed frequency does not have a unique solution due to non-radiating sources \cite{AM,BC}. Possible solutions can be derived using models involving a priori knowledge of the sources or using multi-frequency measurements \cite{ABF,AM,MDZ,LU,Valdivia,WMGL,WSGLL}.
In particular, the multi-frequency measurements can be used to improve the stability of the inverse electromagnetic source problems \cite{BLZ}. 
%In this paper, we consider the Maxwell equations in the frequency domain and investigate time-harmonic waves.

In many cases of practical interest, it is difficult to implement measurement of a complex-valued electric field due to oscillations of its argument, so one has to recover the sources from the phaseless (intensity only) data. We refer to \cite{CH} for a direct imaging method for extended obstacle reconstruction using  phaseless electromagnetic total field data. 
%They show that the quality of image resolution is essentially the same as the imaging results obtained from scattering data with full phase information. 
However, difficulties arise if the phaseless far field data is considered. Actually, we will show in the next section that phaseless electric far field pattern of the scattered electromagnetic wave is invariant under the translation of the source, thus the source cannot be uniquely determined by the phaseless far field data, even multi-frequency data are considered. The same problem also appears in the acoustic and elastic source scattering problems \cite{JL, JLZ-source}. Other than developing methods using  phaseless data directly, we take an alternative approach to first recover the phase information. As a consequence, methods using phased data can be employed.
Recently in \cite{JLZ-obstacle}, by adding a point-like obstacle into the scattering system, a fast and stable phase retrieval approach is proposed for the inverse acoustic obstacle scattering problems. Such a phase retrieval approach is further modified by adding point sources with a fixed source point and at most three different scattering strengths in \cite{JL, JLZ-obstacle2, JLZ-source}. The phase retrieval approach in \cite{JL, JLZ-obstacle, JLZ-obstacle2, JLZ-source} is based on a simple geometrical result which provides a stable reconstruction of a point in the plane from three distances to the given points. 
A proper choice of the three given points is the key of the success of the stable phase retrieval approach.
We also refer to \cite{ZGLL} for a different phase retrieval technique with several specially arranged reference point sources.

The second case of tremendous practical interest is the sparse data.
Uniqueness results for determine a source usually assume that the measurements are taken at an infinite number of points around the source \cite{ABF,WMGL,WSGLL}.
For sparse data, in general, one cannot expect uniqueness for inverse problems. To determine the scattering objects, additional assumptions have to be considered. For example, in \cite{HLZ}, it is shown that a perfectly conducting ball can be determined by the electric far field patterns at three observation directions. Unfortunately, it seems that there are no corresponding results in the literature for the source determination.
From a practical perspective, it is reasonable and interesting to ask the question: {\em What kind of information can be extracted if the the measurements can only be taken at a fixed or a few (finitely many) points or directions.} Recently in \cite{HLLZ}, it is shown that the maximum and minimum distance between measurement point and the support of the source can be uniquely determined by using the electric field at a single point. The idea is from \cite{AHLS,GS,SK} for inverse acoustic source problem with multi-frequency sparse far field data. 

In this paper, we study the inverse electromagnetic source scattering problems using the multi-frequency phased or phaseless electric far field patterns taken at a fixed or a few observation directions. To our best knowledge, this is the first work on uniqueness, direct sampling methods and phase retrieval scheme for inverse electromagnetic scattering problems using phased or phaseless far field data at a single observation direction. We show that under certain condition the strip containing the source support with the observation direction as the normal can be uniquely determined by the multi-frequency far field patterns at a single observation direction. A quite simple direct sampling method (DSM) is then proposed to reconstruct the strip.  The phaseless far field data is also quite important due to its close connection to the well known Poynting vector and the flux density of energy transport through a closed surface. However, difficulties arise because of the translation invariant property of the phaseless far field data. To overcome the non-uniqueness difficulty, we introduce a magnetic dipole into the scattering system. Such a technique enables us to establish a uniqueness result and to provide a novel DSM for the source support reconstruction. Furthermore, with properly chosen scattering strength and polarization, a simple and stable phase retrieval method is proposed.

This work is a nontrivial extension of the results in \cite{JLZ-obstacle2, JLZ-source} for the inverse acoustic scattering problem of the Helmholtz equation to the inverse electromagnetic scattering of the Maxwell equations.
The Maxwell equations are more challenging because of the complexity of the vector fields. In particular, some sophisticated choices on polarization directions are required.

This paper is organized as follows.
In the next section, we formulate the direct and inverse electromagnetic source scattering problem and study the properties of the phaseless electric far field pattern. In section \ref{Sec-phased}, we study what kind of information of the source can be determined by the multi-frequency phased electric far field pattern at a fixed observation direction. We proceed to study the phaseless inverse problem in section \ref{Sec-phaseless}. Finally, in section \ref{Sec-Num}, some numerical simulations are presented to validate the effectiveness and robustness of the proposed DSM and phase retrieval algorithm.

\section{Electromagnetic source scattering problems}\label{ESSP}
This section is devoted to address the time harmonic electromagnetic source scattering problems.
We begin with the notations used throughout this paper. Vectors are distinguished from scalars by the use of bold typeface.
For a vector $\mathbf{a}:=(a_1, a_2, a_3)^{\rm T}\in\C^3$, where the superscript $``{\rm T}"$ denotes the transpose, we define the Euclidean norm of $\mathbf{a}$ by $|\mathbf{a}|:=\sqrt{\mathbf{a\cdot \ov{a}}}$, where
$\ov{\mathbf{a}}:=(\ov{a_1}, \ov{a_2}, \ov{a_3})^{\rm T}\in\C^3$ and $\ov{a_j}$ is the complex conjugate of $a_j$.
Denote by $\bbS:=\{\mathbf{x}\in \R^3: |\mathbf{x}|=1\}$ the unit sphere in $\R^3$.

Consider electromagnetic wave propagation in an isotropic medium in $\R^3$ with space independent electric permittivity $\eps$, magnetic permeability $\mu$ and
electric conductivity $\sigma$. We assume that the conductivity $\sigma\equiv 0$ in $\R^3$. Then the time harmonic electromagnetic wave with the angular frequency $\om$ is described by the electric field ${\bf E}$ and the magnetic field ${\bf H}$ satisfying the {\em Maxwell equations} \cite{Monk}
\be\label{Maxwell}
\curl {\bf E} -i\om\mu {\bf H} = 0, \quad \curl {\bf H} + i\om\eps {\bf E} = {\bf J}\quad \mbox{in}\,\, \R^3,
\en
where $i=\sqrt{-1}$ and ${\bf J}$ describes the external sources of electromagnetic disturbances.
For simplicity, we always assume that ${\bf J=J(y)}\in (L^\infty(\R^3))^3$ with compact support in $D$, where $D\subset\R^3$ is a bounded Lipschitz domain in $\R^3$
with connected complement.
Furthermore, the scattered fields ${\bf E}$ and ${\bf H}$ have to satisfy the
{\em Silver-M$\ddot{u}$ller radiation condition}
\be\label{SMRC}
\sqrt{\frac{\mu}{\eps}}{\bf H\times x- |x|E}= O\left(\frac{1}{\bf |x|}\right) \quad\mbox{as}\,\, {\bf |x|}\rightarrow\infty,
\en
uniformly w.r.t. $\hat{\mathbf{x}}:=\mathbf{x}/|\mathbf{x}|\in \bbS$.

Eliminating the magnetic field ${\bf H}$ from the Maxwell equations \eqref{Maxwell} leads to
\be\label{E}
\curl\curl {\bf E} -k^2 {\bf E} = i\om\mu {\bf J}\quad \mbox{in}\,\, \R^3,
\en
where $k:=\om\sqrt{\eps\mu}$ denotes the wavenumber. Fixing two wave numbers $0<k_{min}<k_{max}$, we consider the wave equation \eqref{E} with
\be\label{kassumption}
k\in K:=(k_{min}, k_{max}).
\en
It is well known (see e.g. \cite{CK,Monk}) that every radiating solution of \eqref{E} has an asymptotic behavior of the form
\be\label{Easy}
{\bf E}({\bf x},k; \mathbf{J})=\frac{e^{ik|{\bf x}|}}{4\pi|{\bf x}|}{\bf E}^{\infty}({\bf\hx},k;\mathbf{J}) + O\left(\frac{1}{|{\bf x}|^2}\right),\quad |{\bf x}|\rightarrow\infty,
\en
uniformly w.r.t. ${\bf\hx}$. The vector field ${\bf E}^{\infty}({\bf\hx},k;\mathbf{J})$ is known as the electric far field pattern of ${\bf E}$. It is an analytic function on the unit sphere $\bbS$ with respect to ${\bf \hx}$ and is a tangential field, i.e., ${\bf \hx\cdot E^{\infty}}({\bf \hx},k;\mathbf{J})=0$ for all ${\bf\hx}\in\bbS$. In this paper, the first question of particular interest is as follows.\\
\begin{itemize}
  \item {\bf IP1:} {\em What kind of information of the source $\mathbf{J}$ can be determined by the electric far field $\bE^{\infty}(\mathbf{\hx}_0, k; \mathbf{J}), k\in K$ at a single observation direction ${\bf \hx}_0$}?\\
\end{itemize}

To compute the electric far field pattern $\bE^{\infty}$ of the scattered field, we consider the scalar data $\mathbf{e}\cdot \bE^{\infty}$ for some direction $\mathbf{e}\in\bbS$. Clearly, taking $\mathbf{e}$ to be each of the orthogonal unit vectors $\mathbf{e_1}:=(1,0,0)^{\rm T},\, \mathbf{e_2}:=(0,1,0)^{\rm T}$ and $\mathbf{e_3}:=(0,0,1)^{\rm T}$, we can compute the three components of $\bE^{\infty}$. In fact since $\mathbf{\hx\cdot E^{\infty}(\mathbf{\hx})}=0$, we need only to use two tangential unit vectors for each $\bf\hx$. For any $\bf\hx\in\bbS$, choose a vector $\mathbf{q}$ such that $\mathbf{\hx\times q}\neq 0$. Then we can define vectors $\mathbf{l}$ and $\mathbf{m}$ by
\be\label{lm}
\mathbf{l}:=\frac{\mathbf{\hx\times q}}{|\mathbf{\hx\times q}|} \quad\mbox{and}\quad  \mathbf{m}:= \mathbf{\hx}\times \mathbf{l}.
\en
The three-tuple $(\mathbf{\hx, l, m})$ forms an orthonormal coordinate system in $\R^3$. Then $\mathbf{l}$ and $\mathbf{m}$ are two desired tangential unit vectors,
and
\be\label{Einflm}
\bE^{\infty}(\mathbf{\hx})= (\mathbf{l}\cdot \bE^{\infty}(\mathbf{\hx}) ) \mathbf{l} + (\mathbf{m}\cdot \bE^{\infty}(\mathbf{\hx}) ) \mathbf{m}.
\en

We recall the {\em dyadic Green's function} for Maxwell equations given by
\ben
\mathbb{G}_k(\mathbf{x,y}):=\Phi_k(\mathbf{x,y})\mathbb{I}+\frac{1}{k^2}\nabla_{\bf y}\nabla_{\bf y}\Phi_k(\mathbf{x,y}),\quad \mathbf{x\neq y},
\enn
where $\mathbb{I}$ is the $3\times3$ identity matrix and $\nabla_{\bf y}\nabla_{\bf y}\Phi_k(\mathbf{x,y})$ is the Hessian matrix for $\Phi$ defined by
\ben
\left(\nabla_{\bf y}\nabla_{\bf y}\Phi_k(\mathbf{x,y})\right)_{m,n} = \frac{\pa ^2 \Phi_k}{\pa y_m \pa y_n},\quad 1\leq m,n\leq 3.
\enn
Here, $\Phi_k$ denotes the fundamental solution of the scalar Helmholtz equation in $\R^3$ given by
\ben
\Phi_k({\bf x,y}):=\frac{e^{ik|\mathbf{x-y}|}}{4\pi|\mathbf{x-y}|}, \quad \mathbf{x\neq y}.
\enn
Then the scattered field ${\bf E}$ has the representation
\be\label{Erepresentation}
{\bf E}({\bf x}, k; \mathbf{J})=i\om\mu\int_{\R^3}\mathbb{G}_k(\mathbf{x,y}){\bf J(y)}d{\bf y}, \quad \mathbf{x}\in\R^3.
\en
Straightforward calculations show that the corresponding electric far field pattern is given by
\be\label{Einf}
{\bf E}^{\infty}({\bf \hx}, k; \mathbf{J})=i\om\mu(\mathbb{I}-{\bf\hx\hx}^{\rm T})\int_{\R^3}e^{-ik{\bf\hx\cdot y}}{\bf J(y)}d{\bf y}, \quad \mathbf{\hx}\in\bbS.
\en

Physically, the quantity $\Re(\bE\times\ov{\bH})$ is the well known {\em Poynting vector} and
\ben
F:=\Re\int_{\pa D}\nu\cdot\bE\times\ov{\bH}ds
\enn
gives the flux density of energy transport through the surface $\pa D$ (see p.79 in \cite{Cessenat}).

\begin{theorem}\label{Fproperties}
Let $B_R$ be a ball centered at the origin with radius $R$ large enough such that $\ov{D}\subset B_R$. Then we have the conservation of energy result
\be\label{FR}
\Re\int_{\pa D}\nu\cdot\bE\times\ov{\bH}ds = \Re\int_{\pa B_R}\nu\cdot\bE\times\ov{\bH}ds
\en
and
\be\label{FEinf}
F = \frac{1}{16\pi^2}\sqrt{\frac{\eps}{\mu}}\int_{\bbS}|\bE^{\infty}|^2 ds.
\en
\end{theorem}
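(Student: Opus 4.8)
The plan is to treat the two identities separately: I would derive \eqref{FR} by a divergence-theorem argument on the source-free annular region $B_R\setminus\ov{D}$, and then pass to the limit $R\to\infty$ to obtain \eqref{FEinf}. For \eqref{FR}, I work in $B_R\setminus\ov{D}$, where $\mathbf{J}$ vanishes, so that the Maxwell system \eqref{Maxwell} reduces to the source-free relations $\curl\bE = i\om\mu\bH$ and $\curl\bH = -i\om\eps\bE$. The key computation is the vector identity $\nabla\cdot(\bE\times\ov{\bH}) = \ov{\bH}\cdot\curl\bE - \bE\cdot\curl\ov{\bH}$. Substituting the source-free equations, and using that $\om,\eps,\mu$ are real so that $\curl\ov{\bH} = i\om\eps\ov{\bE}$, gives $\nabla\cdot(\bE\times\ov{\bH}) = i\om\mu|\bH|^2 - i\om\eps|\bE|^2$, which is purely imaginary. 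Hence $\Re\,\nabla\cdot(\bE\times\ov{\bH}) \equiv 0$ throughout the annulus.

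Applying the divergence theorem over $B_R\setminus\ov{D}$, whose boundary consists of $\pa B_R$ with outward normal together with $\pa D$ with normal pointing into $D$, and taking real parts, the vanishing of the volume integral yields $\Re\int_{\pa B_R}\nu\cdot\bE\times\ov{\bH}\,ds = \Re\int_{\pa D}\nu\cdot\bE\times\ov{\bH}\,ds$, which is exactly \eqref{FR}. In particular this shows that the flux $F$ does not depend on the radius of any sufficiently large ball.

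For \eqref{FEinf}, I exploit this $R$-independence and evaluate the flux over $\pa B_R$ asymptotically as $R\to\infty$. Beyond the expansion \eqref{Easy} for $\bE$, the Silver-M\"uller condition \eqref{SMRC} furnishes the corresponding asymptotics of $\bH$, from which the standard far field relation $\bH^{\infty}(\hx) = \sqrt{\eps/\mu}\,\hx\times\bE^{\infty}(\hx)$ follows (indeed, applying $\curl\bE = i\om\mu\bH$ to the leading term produces $i\om\mu\,\bH^{\infty} = ik\,\hx\times\bE^{\infty}$, and $k=\om\sqrt{\eps\mu}$ gives the constant). Since $\bE^{\infty}$ is tangential, the triple-product identity yields $\hx\cdot(\bE^{\infty}\times\ov{\bH^{\infty}}) = \sqrt{\eps/\mu}\,|\bE^{\infty}|^2$. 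Inserting the asymptotics into $\nu\cdot\bE\times\ov{\bH}$ on $\pa B_R$, where $\nu=\hx$ and the surface element equals $R^2$ times that on $\bbS$, the oscillatory phases cancel in the product $\bE\times\ov{\bH}$ and the leading contribution is $\frac{1}{16\pi^2 R^2}\sqrt{\eps/\mu}\,|\bE^{\infty}|^2 + O(R^{-3})$. Integrating over $\pa B_R$ and letting $R\to\infty$ collapses the remainder and delivers \eqref{FEinf}.

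The step I expect to be the main obstacle is the careful bookkeeping in this far field computation: one must verify that the $O(1/|\mathbf{x}|^2)$ remainders in the expansions of $\bE$ and $\bH$ contribute only $O(R^{-1})$ to the surface integral after multiplication by the $R^2$ area element, and one must extract the relation between $\bE^{\infty}$ and $\bH^{\infty}$ cleanly from the radiation condition rather than simply quoting it. By contrast, the divergence-theorem step for \eqref{FR} is comparatively routine, the only care needed being the correct orientation of the inner normal on $\pa D$.
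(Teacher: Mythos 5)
Your proposal is correct and follows essentially the same route as the paper: a divergence-theorem/Maxwell computation showing the volume term is purely imaginary yields \eqref{FR}, and \eqref{FEinf} follows by letting $R\to\infty$ in the flux over $\pa B_R$ using the radiation condition and far field asymptotics. The only minor difference is that the paper never introduces the magnetic far field pattern: it writes $\nu\cdot\bE\times\ov{\bH}=\ov{\bH}\times\nu\cdot\bE$ and substitutes the Silver-M\"uller condition \eqref{SMRC} to get $\sqrt{\eps/\mu}\,|\bE|^2+O(1/R^3)$ on $\pa B_R$, so only the expansion \eqref{Easy} of $\bE$ is needed and the bookkeeping you flag as the main obstacle (justifying $\bH^{\infty}=\sqrt{\eps/\mu}\,\hx\times\bE^{\infty}$ by differentiating the expansion term by term) never arises.
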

\begin{proof}
We apply Gauss's divergence theorem in the domain $B_R\ba\ov{D}$ and the Maxwell equations \eqref{Maxwell} to deduce that
\ben
&&\int_{\pa B_R}\nu\cdot\bE\times\ov{\bH}ds - \int_{\pa D}\nu\cdot\bE\times\ov{\bH}ds\cr
&=&\int_{B_R\ba\ov{D}}(\curl\bE\cdot\ov{\bH} -\bE\cdot\curl\ov{\bH})d\mathbf{x}\cr
&=&\int_{B_R\ba\ov{D}}(i\om\mu|\bH|^2 +i\om\eps|\bE|^2)d\mathbf{x}.
\enn
Then the result on conservation of energy \eqref{FR} follows by taking the real part on both sides of the above equation.

It is well known that the electric field $\bE$ satisfies the finiteness condition
\be
\bE({\bf x}) = O\left(\frac{1}{|\bf x|}\right), \quad \bf |x|\rightarrow\infty,
\en
uniformly for all directions. Note that $F$ is independent of the radius $R$.
Furthermore, letting $R\rightarrow\infty$, using the {\em Silver-M$\ddot{u}$ller radiation condition} \eqref{SMRC} and the asymptotic behavior \eqref{Easy}, we find that
\ben
F
&=& \Re\int_{\pa B_R}\nu\cdot\bE\times\ov{\bH}ds\cr
&=& \lim_{R\rightarrow\infty}\Re\int_{\pa B_R}\ov{\bH}\times\nu\cdot\bE ds\cr
&=& \sqrt{\frac{\eps}{\mu}}\lim_{R\rightarrow\infty}\int_{\pa B_R}[|\bE|^2+O(1/R^3)] ds\cr
&=& \frac{1}{16\pi^2}\sqrt{\frac{\eps}{\mu}}\int_{\bbS}|\bE^{\infty}|^2 ds
\enn
and the proof is finished.
\end{proof}

Theorem \ref{Fproperties} shows that the phaseless electric far field pattern $|\bE^{\infty}|$ is closely related to the flux density of energy. Particularly, for radar applications, the phaseless electric far field pattern $|\bE^{\infty}(\mathbf{\hx},k;\mathbf{J})|$ defines the {\em radar cross section} in the direction $\mathbf{\hx}\in\bbS$ (see p. 392 in \cite{Monk}). Inspired by these applications, recall the two vectors $\mathbf{l}$ and $\mathbf{m}$ given in \eqref{lm}, we are also interest in the following question.\\
\begin{itemize}
  \item {\bf IP2:} {\em What kind of information of the source ${\bf J}$ can be determined by the phaseless electric far field pattern $|\mathbf{e}\cdot\bE^{\infty}(\mathbf{\hx}_0,k;\mathbf{J})|, k\in K, \mathbf{e}\in\{\mathbf{l},\mathbf{m}\}$  at a single observation direction ${\bf \hx}_0$}?\\
\end{itemize}

Unfortunately, the following theorem implies that the phaseless electric far field pattern is invariant under the translation of the source ${\bf J}$.

\begin{theorem}
Let $\mathbf{J}_{\mathbf{h}}(\mathbf{y}):=\mathbf{J}(\mathbf{y+h})$ be the shifted source with a fixed vector $\mathbf{h}\in \R^{3}$.
Then, for any fixed wavenumber $k>0$,
\be\label{Trans-Invariance}
\bE^{\infty}(\mathbf{\hx},k;\mathbf{J}_{\mathbf{h}}) = e^{ik{\bf\hx\cdot h}} \bE^{\infty}(\mathbf{\hx},k;\mathbf{J}), \quad \hat{\mathbf{x}}\in\bbS.
\en
\end{theorem}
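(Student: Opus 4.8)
The plan is to start directly from the explicit formula for the electric far field pattern derived earlier in the excerpt, equation \eqref{Einf}, which expresses $\bE^{\infty}(\mathbf{\hx},k;\mathbf{J})$ as a projection operator $i\om\mu(\mathbb{I}-\mathbf{\hx\hx}^{\rm T})$ applied to the Fourier-type integral $\int_{\R^3}e^{-ik\mathbf{\hx\cdot y}}\mathbf{J(y)}d\mathbf{y}$. Since the claimed identity \eqref{Trans-Invariance} is an identity of tangential vector fields on $\bbS$ for each fixed $k$, it suffices to track how this integral transforms when $\mathbf{J}$ is replaced by the shifted source $\mathbf{J}_{\mathbf{h}}(\mathbf{y})=\mathbf{J}(\mathbf{y+h})$. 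The projection factor $i\om\mu(\mathbb{I}-\mathbf{\hx\hx}^{\rm T})$ depends only on $\mathbf{\hx}$ and not on the source, so it plays no role in the translation and simply factors through; the entire effect of the shift is confined to the integral.

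First I would write down $\bE^{\infty}(\mathbf{\hx},k;\mathbf{J}_{\mathbf{h}})$ using \eqref{Einf} with $\mathbf{J}_{\mathbf{h}}$ in place of $\mathbf{J}$, obtaining $i\om\mu(\mathbb{I}-\mathbf{\hx\hx}^{\rm T})\int_{\R^3}e^{-ik\mathbf{\hx\cdot y}}\mathbf{J(y+h)}d\mathbf{y}$. Then I would perform the change of variables $\mathbf{z}=\mathbf{y+h}$, so that $\mathbf{y}=\mathbf{z-h}$ and $d\mathbf{y}=d\mathbf{z}$ (the translation is measure-preserving with unit Jacobian, and it maps $\R^3$ onto itself). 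Under this substitution the exponent becomes $-ik\mathbf{\hx\cdot(z-h)}=-ik\mathbf{\hx\cdot z}+ik\mathbf{\hx\cdot h}$, so the factor $e^{ik\mathbf{\hx\cdot h}}$ splits off as a constant that is independent of the integration variable $\mathbf{z}$. Pulling this scalar factor outside the integral (and outside the projection, since it is just a number) leaves precisely $e^{ik\mathbf{\hx\cdot h}}\,i\om\mu(\mathbb{I}-\mathbf{\hx\hx}^{\rm T})\int_{\R^3}e^{-ik\mathbf{\hx\cdot z}}\mathbf{J(z)}d\mathbf{z}=e^{ik\mathbf{\hx\cdot h}}\bE^{\infty}(\mathbf{\hx},k;\mathbf{J})$, which is the desired identity.

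Honestly, there is no serious obstacle here: the result is a direct consequence of the Fourier-shift property applied to the representation \eqref{Einf}, and the argument is essentially a one-line change of variables once that formula is in hand. The only points requiring mild care are verifying that the integrals are well defined (which is guaranteed by the standing assumption $\mathbf{J}\in(L^\infty(\R^3))^3$ with compact support in $D$, so $\mathbf{J}_{\mathbf{h}}$ is again compactly supported and the integrals converge absolutely) and noting that the shift preserves both the tangentiality condition $\mathbf{\hx\cdot E^{\infty}}=0$ and the compact-support hypothesis underlying \eqref{Einf}. The interpretive significance — that a pure phase factor $e^{ik\mathbf{\hx\cdot h}}$ of modulus one disappears upon taking $|\mathbf{e}\cdot\bE^{\infty}|$, so the phaseless data cannot distinguish $\mathbf{J}$ from any of its translates — follows immediately and motivates the phase-retrieval constructions developed in the subsequent sections.
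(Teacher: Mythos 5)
Your proposal is correct and is essentially identical to the paper's own proof: both apply the representation \eqref{Einf} to the shifted source, perform the change of variables $\mathbf{z}=\mathbf{y+h}$, and pull the resulting constant phase factor $e^{ik\mathbf{\hx\cdot h}}$ out of the integral. The additional remarks on absolute convergence and the irrelevance of the projection factor are fine but not needed beyond what the paper records.
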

\begin{proof}
Using the representation \eqref{Einf},
\ben
{\bf E}^{\infty}({\bf \hx}, k; \mathbf{J}_{\mathbf{h}})
&=&i\om\mu(\mathbb{I}-{\bf\hx\hx}^{\rm T})\int_{\R^3}e^{-ik{\bf\hx\cdot y}}{\bf J_{h}(y)}d{\bf y}\cr
&=&i\om\mu(\mathbb{I}-{\bf\hx\hx}^{\rm T})\int_{\R^3}e^{-ik{\bf\hx\cdot y}}{\bf J(y+h)}d{\bf y}\cr
&=&i\om\mu(\mathbb{I}-{\bf\hx\hx}^{\rm T})\int_{\R^3}e^{-ik{\bf\hx\cdot (z-h)}}{\bf J(z)}d{\bf z}\cr
&=&i\om\mu(\mathbb{I}-{\bf\hx\hx}^{\rm T})e^{ik{\bf\hx\cdot h}}\int_{\R^3}e^{-ik{\bf\hx\cdot z}}{\bf J(z)}d{\bf z}\cr
&=&e^{ik{\bf\hx\cdot h}} \bE^{\infty}(\mathbf{\hx},k;\mathbf{J}), \quad \mathbf{\hx}\in\bbS.
\enn
\end{proof}

From \eqref{Trans-Invariance}, it is clear that
\be\label{Translationinvariance}
|\bE^{\infty}(\mathbf{\hx},k;\mathbf{J}_{\mathbf{h}})| = | \bE^{\infty}(\mathbf{\hx},k;\mathbf{J})|,\quad \hat{\mathbf{x}}\in\bbS.
\en
That is, the location of the source can not be uniquely determined by the phaseless electric far field pattern $| \bE^{\infty}(\mathbf{\hx},k;\mathbf{J})|$, even multi-frequency is considered. Actually, even the location is known in advance, the source ${\bf J}$ can not be uniquely determined by noting the fact that
$|\bE^{\infty}(\mathbf{\hx},k;c\mathbf{J})| = | \bE^{\infty}(\mathbf{\hx},k;\mathbf{J})|, \hat{\mathbf{x}}\in\bbS$ for any constant $c\in\C$ with modulus one.

To overcome the above difficulties, we consider a magnetic dipole located at ${\bf z_0}\in\R^3$ with scattering strength $\tau\in\C$, polarization ${\bf p}\in\bbS$. The corresponding scattered electric field takes the form \cite{CK}
\be\label{Ez}
\bE_{\bf z_0}({\bf x},k; \tau, \mathbf{p}) = \tau \curl_{\bf x} \mathbf{p} \Phi({\bf x, z_0}), \quad {\bf x}\in\R^3\ba\{\bf z_0\}.
\en
The electric far field pattern corresponding to the electric field $\bE_{\bf z_0}$ is given by
\be\label{Ezinf}
\bE^{\infty}_{\bf z_0}({\bf \hx},k; \tau, \mathbf{p}) = ik\tau e^{-ik{\bf\hx\cdot z_0}}{\bf\hx}\times\mathbf{p},  \quad \mathbf{\hx}\in\bbS.
\en

Due to the translation invariance \eqref{Translationinvariance}, we add a magnetic dipole into the scattering system. Because of linearity, the resulting electric far field $\bE^{\infty}(\mathbf{\hx},k;\mathbf{J},\tau)$ is given by the sum of the electric far field pattern $\bE^{\infty}(\mathbf{\hx},k;\mathbf{J})$ due to the current source $\mathbf{J}$ and the electric far field pattern $\bE^{\infty}_{\bf z_0}({\bf \hx},k;\tau)$ due to the known dipole, i.e.,
\be\label{Einfsum}
\bE^{\infty}_{\bf z_0}(\mathbf{\hx},k;\mathbf{J},\tau, \mathbf{p}) = \bE^{\infty}(\mathbf{\hx},k;\mathbf{J}) + \bE^{\infty}_{\bf z_0}({\bf \hx},k;\tau, \mathbf{p}), \quad \mathbf{\hx}\in\bbS,
\en
We want to remark that the special case $\tau=0$ implies that no additional dipole exists.
Instead of studying the inverse problem {\bf IP2}, we turn to consider the following problem with updated phaseless data.\\
\begin{itemize}
  \item {\bf IP3:} {\em What kind of information of the source ${\bf J}$ can be determined by the phaseless electric far field pattern $|\mathbf{e}\cdot\bE^{\infty}_{\bf z_0}(\mathbf{\hx}_0,k;\mathbf{J},\tau, \mathbf{p})|, k\in K, \mathbf{e}\in\{\mathbf{l},\mathbf{m}\}$  at a single observation direction ${\bf \hx}_0$}?\\
\end{itemize}

In the next sections, we consider the uniqueness and sampling methods for {\bf IP1} with the phased data and for {\bf IP3} with the phaseless data. By properly choosing the dipole strengths, a simple phase retrieval scheme will also be proposed.

\section{Uniqueness and DSM for {\bf IP1}}\label{Sec-phased}
In this section, we investigate what kind of information of the source can be uniquely determined by the multi-frequency phased electric far field pattern at a single observation direction $\mathbf{\hx_0} \in \bbS$.
%We are interested in broadband sparse measurements, i.e., the data can only be measured in finitely many observation directions,
%\ben
%\{\bf\hx_1, \hx_2, \cdots, \hx_M\}=:\Theta\subset \bbS.
%\enn
%In particular, we are interested in what information can be obtained using multi-frequency data with a single observation direction.
Denote by $D$ the support of the source $\mathbf{J}$, the ${\bf\hx}$-strip hull of $D$ for an observation direction $\mathbf{\hx} \in \bbS$ is defined by
\ben
S(\mathbf{\hx; J}):=  \{ \mathbf{y}\in \mathbb \R^{3}\; | \; \inf_{\mathbf{z}\in D}\mathbf{z}\cdot \mathbf{\hx} \leq \mathbf{y}\cdot \mathbf{\hx} \leq \sup_{\mathbf{z}\in D}\mathbf{z}\cdot \mathbf{\hx}\},
\enn
which is the smallest strip (region between two parallel hyper-planes) with $\pm \mathbf{\hx}$ as normals that contains $\overline{D}$.
Let
\ben
\Pi_{\alpha}:=\{\mathbf{y}\in \R^3 | \mathbf{y}\cdot\mathbf{\hx}+\alpha=0\}, \quad \alpha\in\R,
\enn
be a hyperplane with normal $\mathbf{\hx}$. Define
\be
\hat{\bf J}_{\mathbf{e}}(\alpha):=\int_{\Pi_\alpha}\mathbf{e}\cdot\mathbf{J}(\mathbf{y})ds(\mathbf{y}), \quad \mathbf{e}\in\bbS, \alpha\in\R.
\en

We first give a uniqueness result following the ideas for the inverse acoustic and elastic scattering problems \cite{AHLS, JL}.

\begin{theorem}\label{uni-strip}
For any fixed $\mathbf{\hx_{0}}\in\bbS$, choose a vector $\mathbf{q_0}\in\mathbb{S}^2$ such that $\mathbf{q_0\cdot\hx_0}\neq 0$ and then define two vectors $\mathbf{l}$ and $\mathbf{m}$ as in \eqref{lm}. For any $\mathbf{e}\in\{\mathbf{l,m}\}$, if the set
\be\label{set1}
\{\alpha\in\R | \, \Pi_\alpha\subset S(\mathbf{\hx}_0; \mathbf{J}), \hat{\bf J}_{\mathbf{e}}(\alpha)=0\}
\en
has Lebesgue measure zero, then the strip $S(\mathbf{\hx}_0; \mathbf{J})$ of the source support $D$ can be uniquely determined by
the data $\mathbf{e}\cdot\bE^{\infty}(\mathbf{\hx_0}, k; \mathbf{J})$ for all $k\in K$ at the observation direction $\mathbf{\hx_{0}}\in\bbS$.
\end{theorem}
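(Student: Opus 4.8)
The plan is to reduce the vector data at the single direction $\mathbf{\hx}_0$ to a one-dimensional Fourier transform of the plane-integral profile $\hat{\bf J}_{\mathbf{e}}$, and then recover the strip from the support of that profile. First I would use the representation \eqref{Einf}. Since $\mathbf{e}\in\{\mathbf{l,m}\}$ is tangential, $\mathbf{e}\cdot\mathbf{\hx}_0=0$, so $\mathbf{e}^{\rm T}(\mathbb{I}-\mathbf{\hx}_0\mathbf{\hx}_0^{\rm T})=\mathbf{e}^{\rm T}$ and the tangential projection drops out, giving $\mathbf{e}\cdot\bE^{\infty}(\mathbf{\hx}_0,k;\mathbf{J})=i\om\mu\int_{\R^3}e^{-ik\mathbf{\hx}_0\cdot\mathbf{y}}\,\mathbf{e}\cdot\mathbf{J}(\mathbf{y})\,d\mathbf{y}$. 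I would then slice the volume integral along $\mathbf{\hx}_0$ by Fubini and the coarea formula, writing $\mathbf{y}\cdot\mathbf{\hx}_0=t$ and integrating first over the hyperplane $\{\mathbf{y}\cdot\mathbf{\hx}_0=t\}=\Pi_{-t}$. After the change of variable $\alpha=-t$ this yields the clean identity $\mathbf{e}\cdot\bE^{\infty}(\mathbf{\hx}_0,k;\mathbf{J})=i\om\mu\int_{\R}e^{ik\alpha}\hat{\bf J}_{\mathbf{e}}(\alpha)\,d\alpha$; that is, the data is, up to the known nonzero factor $i\om\mu$, the Fourier transform of $\hat{\bf J}_{\mathbf{e}}$ evaluated at $k$.

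Next I would exploit analyticity. Because $\mathbf{J}\in(L^\infty(\R^3))^3$ is compactly supported in $D$, the profile $\hat{\bf J}_{\mathbf{e}}$ is a compactly supported $L^1(\R)$ function, so by the Paley--Wiener theorem its Fourier transform extends to an entire function of exponential type; in particular its restriction to the real axis is real-analytic in $k$. Hence knowing $\mathbf{e}\cdot\bE^{\infty}(\mathbf{\hx}_0,k;\mathbf{J})$ for all $k$ in the open interval $K$ determines the transform on all of $\R$ by unique continuation, and Fourier inversion then recovers $\hat{\bf J}_{\mathbf{e}}$ uniquely.

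Finally I would pass from $\hat{\bf J}_{\mathbf{e}}$ to the strip. Writing $a=\inf_{\mathbf{z}\in D}\mathbf{z}\cdot\mathbf{\hx}_0$ and $b=\sup_{\mathbf{z}\in D}\mathbf{z}\cdot\mathbf{\hx}_0$, the plane $\Pi_\alpha$ misses $D$ whenever $-\alpha\notin[a,b]$, so $\hat{\bf J}_{\mathbf{e}}$ is supported in $[-b,-a]$, while $\Pi_\alpha\subset S(\mathbf{\hx}_0;\mathbf{J})$ exactly for $\alpha\in[-b,-a]$. The hypothesis that the set in \eqref{set1} has Lebesgue measure zero says precisely that $\hat{\bf J}_{\mathbf{e}}(\alpha)\neq0$ for a.e.\ $\alpha\in(-b,-a)$, which forces the essential support of $\hat{\bf J}_{\mathbf{e}}$ to be exactly $[-b,-a]$. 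Reading off the endpoints of this support returns $a$ and $b$, hence the two bounding hyperplanes and the strip $S(\mathbf{\hx}_0;\mathbf{J})$.

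I expect the genuinely delicate point to be this last step rather than the transform identity or the analytic continuation: without the measure-zero hypothesis the profile $\hat{\bf J}_{\mathbf{e}}$ could vanish on a whole subinterval adjacent to an endpoint—for instance through cancellation of $\mathbf{e}\cdot\mathbf{J}$ across a slice—so that its support is strictly smaller than $[-b,-a]$ and only a narrower strip would be recovered; the hypothesis is exactly what excludes this degeneracy. A secondary care point is checking that $\hat{\bf J}_{\mathbf{e}}\in L^1(\R)$ with the requisite exponential type, which follows from the boundedness of $\mathbf{J}$ together with the finiteness of the cross-sectional areas of the bounded domain $D$.
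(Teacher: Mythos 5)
Your proposal is correct and follows essentially the same route as the paper's proof: use $\mathbf{e}\cdot\mathbf{\hx}_0=0$ to reduce the data to the one-dimensional Fourier transform $i\om\mu\int_{\R}e^{ik\alpha}\hat{\bf J}_{\mathbf{e}}(\alpha)\,d\alpha$, extend from $k\in K$ to all of $\R$ by analyticity (the paper simply asserts analyticity of the data in $k$ where you invoke Paley--Wiener, which is the right justification), and recover the strip from the essential support of $\hat{\bf J}_{\mathbf{e}}$ under the measure-zero hypothesis. Your explicit reading of the endpoints $a,b$ is just a more detailed phrasing of the paper's identity $S(\mathbf{\hx}_0;\mathbf{J})=\ov{\bigcup_{\alpha\in\R}\{\Pi_{\alpha}\,|\,\hat{\bf J}_{\mathbf{e}}(\alpha)\neq 0\}}$.
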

\begin{proof}
From the representation \eqref{Einf}, by noting the fact that $\mathbf{e}\cdot\mathbf{\hx} = 0$ for all $\mathbf{e}\in\{\mathbf{l,m}\}$, we deduce that
\ben
\mathbf{e}\cdot\bE^{\infty}(\mathbf{\hx_0}, k; \mathbf{J})
&=&i\om\mu\int_{\R^3}e^{-ik{\bf\hx\cdot y}}\mathbf{e}\cdot{\bf J(y)}d{\bf y}\cr
&=&i\om\mu\int_{\R}e^{ik\alpha}\int_{\Pi_{\alpha}}\mathbf{e}\cdot{\bf J(y)}ds(\mathbf{y})d\alpha\cr
&=&i\om\mu\int_{\R}e^{ik\alpha}\hat{\bf J}_{\mathbf{e}}(\alpha)d\alpha, \quad \mathbf{e}\in\{\mathbf{l,m}\}.
\enn
Clearly, the data  $\frac{1}{i\om\mu}\mathbf{e}\cdot\bE^{\infty}(\mathbf{\hx_0}, k; \mathbf{J})$ is just the inverse Fourier transform of $\hat{\bf J}_{\mathbf{e}}(\alpha)$. This implies that $\hat{\bf J}_{\mathbf{e}}(\alpha)$ is uniquely determined by $\mathbf{e}\cdot\bE^{\infty}(\mathbf{\hx_0}, k; \mathbf{J}), k\in K$.
Note that here we have used the fact that the data $\mathbf{e}\cdot\bE^{\infty}(\mathbf{\hx_0}, k; \mathbf{J})$ is analytic in $k$.
Under the assumption that the set in \eqref{set1} has Lebesgue measure zero, it is seen that
\ben
S(\mathbf{\hx}_0;\mathbf{J})=\ov{\bigcup_{\alpha\in\R}\{\Pi_{\alpha} |\, \hat{\bf J}_{\mathbf{e}}(\alpha)\neq 0\}}
\enn
which implies that the strip $S(\mathbf{\hx}_0;\mathbf{J})$ is uniquely determined by $\hat{\bf J}_{\mathbf{e}}$, and also by $\mathbf{e}\cdot\bE^{\infty}(\mathbf{\hx_0}, k; \mathbf{J})$ for all $k\in K$. The proof is complete.
\end{proof}

\begin{remark}
We add some remarks on Theorem \ref{uni-strip}.
\begin{itemize}
  \item Clearly, the vector $\mathbf{e}$ can be any unit vector satisfying $\mathbf{e\cdot\hx_0}=0$.
  \item The set in \eqref{set1} has Lebesgue measure zero if the real part of a complex multiple of the source projection function $\mathbf{e\cdot J}$  is bounded away from zero on their support, i.e., we assume that $\mathbf{e\cdot J}\in L^{\infty}(D)$ satisfies
\be\label{fassumption}
\Re(e^{i\alpha}\mathbf{e\cdot J(y)})\geq c_{0},\quad a.e.\,\, \mathbf{y}\in D
\en
for some $\alpha \in \R$ and $c_{0} >0$.
However, Theorem \ref{uni-strip} is not true in general if the set in \eqref{set1} has positive Lebesgue measure.
For example, for $\mathbf{y}=(y_1,y_2,y_3)^{\rm T}\in\R^3$, we consider
\be\label{J1}
  \mathbf{J}_1(\mathbf{y})=\left\{
         \begin{array}{ll}
           (1,0,0)^{\rm T}, & \hbox{$y_1\in (-1,1),\, y_2\in (-2,-1]\cup[1,2),\,  y_3\in (-1,1)$;} \quad\\
           (y_1, 0,0)^{\rm T}, & \hbox{$y_1\in (-1,1),\,  y_2\in (-1,1),\,  y_3\in (-1,1)$;}\\
           (0,0,0)^{\rm T}, & \hbox{otherwise.}
         \end{array}
       \right.
\en
and
\be\label{J2}
  \mathbf{J}_2(\mathbf{y})=\left\{
         \begin{array}{ll}
           (1,0,0)^{\rm T}, & \hbox{$y_1\in (-1,1),\,  y_2\in (-2,-1]\cup[1,2),\,  y_3\in (-1,1)$;}\quad\\
           (0,0,0)^{\rm T}, & \hbox{otherwise.}
         \end{array}
       \right.
\en
Taking $\mathbf{\hx}_0 = (0,1,0)^{\rm T}$, then the strips with normals $\pm\mathbf{\hx}_0$ corresponding to $\mathbf{J}_1$ and $\mathbf{J}_2$ are given by
\ben
  S(\mathbf{\hx}_0;\mathbf{J}_1):=\R\times [-2,2]\times\R \quad\mbox{and}\quad S(\mathbf{\hx}_0;\mathbf{J}_2):=\R\times [-2,-1] \cup [1,2]\times\R,
\enn
respectively.
However, for any $\mathbf{e}\in\bbS$ satisfying $\mathbf{e\cdot\hx_0}=0$, straightforward calculations show that
\ben
  \mathbf{e}\cdot \bE^{\infty}(\mathbf{\hx_0}, k; \mathbf{J}_1) = \mathbf{e}\cdot \bE^{\infty}(\mathbf{\hx_0}, k; \mathbf{J}_2), \quad k\in K.
\enn
\end{itemize}
\end{remark}

Now we turn to the DSM on how to determine the strip $S(\mathbf{\hx_0};\mathbf{J})$ from the projection of the electric far field pattern in the direction $\mathbf{e} \in \{\mathbf{l,m}\}$ at a single observation direction $\mathbf{\hx_0}\in\bbS$. We consider the following integral
\be\label{G}
G_{\mathbf{\hx_0}}(\mathbf{z,e}) :=\int_{K}\mathbf{e}\cdot \bE^{\infty}(\mathbf{\hx_0}, k; \mathbf{J}) e^{ik\mathbf{\hx_0\cdot z}}dk, \quad \mathbf{z}\in\R^3, \quad \mathbf{e} \in \{\mathbf{l,m}\}
\en
and define indicator the
\be\label{indicator1}
I_{\mathbf{\hx_0}}(\mathbf{z,e}):=\left|G_{\mathbf{\hx_0}}(\mathbf{z,e})\right|, \quad \mathbf{z}\in\R^3, \quad \mathbf{e} \in \{\mathbf{l,m}\}.
\en
To see how this indicator might work, we first define $\Pi:=\{\mathbf{d}\in\R^3 | \mathbf{d}=a\mathbf{l}+b\mathbf{m}, a, b\in \R\}$. Then $\Pi$ is a hyper plane with normals $\pm\mathbf{\hx_0}$, that is, for any $\mathbf{d}\in\Pi$, we have $\mathbf{d\cdot\hx_0}=0$. Straight calculations show that
\ben
I_{\mathbf{\hx_0}}(\mathbf{z}+\alpha\mathbf{d},\mathbf{e})= I_{\mathbf{\hx_0}}(\mathbf{z,e}),\quad \mathbf{z}\in\R^3, \quad \mathbf{e} \in \{\mathbf{l,m}\},\quad \alpha\in\R,
\enn
which implies that the indicator $I_{\mathbf{\hx_0}}$ takes the same value for sampling points moving in the hyperplane with normals $\pm\mathbf{\hx_0}$.
Inserting the representation \eqref{Einf} into \eqref{indicator1}, using the formula $k=\om\sqrt{\eps\mu}$ and the fact that $\mathbf{e\cdot\hx_0}=0$, and then integrating by parts,  we deduce that
\be\label{Ibehaviorfar}
I_{\mathbf{\hx_0}}(\mathbf{z,e})
&=&\sqrt{\frac{\mu}{\eps}}\left|\int_{D}\mathbf{e}\cdot \mathbf{J(y)}\int_{K} k e^{ik\mathbf{\hx_0\cdot (z-y)}}dkd\mathbf{y}\right|\cr
&=&\sqrt{\frac{\mu}{\eps}}\left|\int_{D}\frac{f(\mathbf{y,z,e})}{|\mathbf{\hx_0\cdot (z-y)}|}d\mathbf{y}\right|, \quad \mathbf{z}\in\R^3, \quad \mathbf{e} \in \{\mathbf{l,m}\}
\en
with
\ben
f(\mathbf{y,z,e}):=\mathbf{e}\cdot \mathbf{J(y)}\left[k e^{ik\mathbf{\hx_0\cdot (z-y)}}|_{K}-\int_{K} e^{ik\mathbf{\hx_0\cdot (z-y)}}dk\right], \quad \mathbf{y}\in D, \,\,\mathbf{e} \in \{\mathbf{l,m}\}.
\enn
Clearly, $f(\mathbf{y,z,e})$ is uniformly bounded with respect to the sampling point $\mathbf{z}\in\R^3$, and thus the indicator $I_{\mathbf{\hx_0}}(\mathbf{z,e})$
decays like $\frac{1}{|\mathbf{\hx_0\cdot (z-y)}|}$ when the sampling point $\mathbf{z}$ moves away from the strip $S(\mathbf{\hx_0};\mathbf{J})$.

From the behaviors explained above, we expect that the indicator $I_{\mathbf{\hx_0}}$ will give a rough reconstruction for the strip $S(\mathbf{\hx_0};\mathbf{J})$.
We can now propose the DSM for strip reconstruction using broadband data at a single observation direction.\\

{\bf Strip Reconstruction Scheme One: }
\begin{itemize}{\em
  \item (1). Collect the broadband electric far field pattern $\mathbf{e}\cdot \bE^{\infty}(\mathbf{\hx_0}, k; \mathbf{J}), \, k\in K$ with some $\mathbf{e}\in \{\mathbf{l,m}\}$ where $\mathbf{l}$ and $\mathbf{m}$ are given in \eqref{lm}.
  \item (2). Select a hyperplane $\Sigma$ parallel to the observation direction $\mathbf{\hx_0}$. Select a sampling region in $\Sigma$ with a fine mesh $\mathcal {Z}$ containing the projection of the source support $D$ in the hyperplane $\Sigma$,
  \item (3). Compute the indicator functional $I_{\mathbf{\hx_0}}$  for all sampling points $z\in\mathcal {Z}$,
  \item (4). Plot the indicator functional $I_{\mathbf{\hx_0}}$ }.
\end{itemize}\,\quad

The sampling region in above scheme is only a bounded two dimensional domain in the hyperplane $\Sigma$, which is parallel to the observation direction $\mathbf{\hx_0}$. Thus, different to the other numerical methods, our numerical implementation is very fast.
If the source support $D$ has only one component, by plotting the indicator $I_{\mathbf{\hx_0}}$, two parallel lines with normal $\mathbf{\hx_0}$ are expected to be roughly reconstructed. Then the strip $S(\mathbf{\hx_0};\mathbf{J})$ is the region between two parallel hyperplane with normal $\pm\mathbf{\hx_0}$ containing the two reconstructed lines, respectively.

Recall that $\bE^{\infty}(\mathbf{\hx}_0)= (\mathbf{l}\cdot \bE^{\infty}(\mathbf{\hx}_0) ) \mathbf{l} + (\mathbf{m}\cdot \bE^{\infty}(\mathbf{\hx}_0) ) \mathbf{m}$. However, to determine the strip $S(\mathbf{\hx_0};\mathbf{J})$, we have used only the data $\mathbf{l}\cdot \bE^{\infty}(\mathbf{\hx}_0)$ or $\mathbf{m}\cdot \bE^{\infty}(\mathbf{\hx}_0)$, which is partial information of the phased electric far field pattern.
Theorem \ref{uni-strip} and the DSM also implies that three linearly independent observation directions are enough to give a rough support of the source function $\mathbf{J}$.

\section{DSM and phase retrieval method for {\bf IP3}}\label{Sec-phaseless}
We begin with a novel DSM for strip $S(\mathbf{\hx_0};\mathbf{J})$ reconstruction using phaseless data directly.
For any fixed $\mathbf{\hx_{0}}\in\bbS$, choose a vector $\mathbf{q_0}\in\mathbb{S}^2$ such that $\mathbf{q_0\cdot\hx_0}\neq 0$ and then define two vectors $\mathbf{l}$ and $\mathbf{m}$ as in \eqref{lm}. Taking $\tau_1\in\C\ba\{0\}$ and $\mathbf{z_0}\in\R^3\ba\ov{D}$, for the novel DSM, we need the following phaseless data
\be\label{DSMphaselessdata}
|\mathbf{m}\cdot\bE^{\infty}_{\bf z_0}(\mathbf{\hx}_0,k;\mathbf{J},\tau,\mathbf{l})|,\quad k\in K,\,\,\tau\in \{0, \tau_1\}.
\en
Using the representation \eqref{Einfsum}, noting that the three-tuple $\mathbf{\{\hx_0, l, m\}}$ forms an orthonormal basis of $\R^3$, we find
\be\label{H}
&&\mathcal {H}(\mathbf{\hx_0}, k; \mathbf{J},\tau,\mathbf{z_0,l,m})\cr
&:=& \frac{1}{k}\left[|\mathbf{m}\cdot\bE^{\infty}_{\bf z_0}(\mathbf{\hx}_0,k;\mathbf{J},\tau_1,\mathbf{l})|^2-|\mathbf{m}\cdot\bE^{\infty}(\mathbf{\hx}_0,k;\mathbf{J})|^2-|k\tau_1|^2\right] \cr
&=& \frac{1}{k}\left[|\mathbf{m}\cdot\bE^{\infty}(\mathbf{\hx}_0,k;\mathbf{J}) +ik\tau_1 e^{-ik\mathbf{\hx_0\cdot z_0}}|^2-|\mathbf{m}\cdot\bE^{\infty}(\mathbf{\hx}_0,k;\mathbf{J})|^2-|k\tau_1|^2\right] \cr
&=& -2\Re[\mathbf{m}\cdot\bE^{\infty}(\mathbf{\hx}_0,k;\mathbf{J})i\tau_1e^{ik\mathbf{\hx_0\cdot z_0}}]
\en
Furthermore, we introduce the following indicator
\be\label{Indicatorphaseless}
I_{\bf \hx_0,z_0}{\bf(z)}:=\left|\int_{K} \mathcal {H}(\mathbf{\hx_0}, k; \mathbf{J},\tau,\mathbf{z_0,l,m})\cos[k\mathbf{\hx_0\cdot(z-z_0)}]dk\right|,\quad\,\mathbf{z}\in\R^3.
\en
We observe immediately two obvious properties of the indicator $I_{\bf \hx_0,z_0}$:
\begin{itemize}
  \item $I_{\bf \hx_0,z_0}{\bf(z+d)}=I_{\bf \hx_0,z_0}{\bf(z)}$ for any $\mathbf{z}\in\R^3$ provided that $\mathbf{d\cdot\hx_0}=0$;
  \item $I_{\bf \hx_0,z_0}{\bf(2z_0-z)}=I_{\bf \hx_0,z_0}{\bf(z)}$ for any $\mathbf{z}\in\R^3$.
\end{itemize}
Difficulty will arise due to the second property. Actually, we hope to determine the strip $S(\mathbf{\hx_0, J})$ by using the novel indicator $I_{\bf \hx_0,z_0}$.
However, from the second property, the symmetric strip $S(\mathbf{\hx_0, J, z_0})$ of the strip $S(\mathbf{\hx_0, J})$ with respect to $\mathbf{z_0}$ will also be reconstructed. We can not distinguish these two strips. In particular, if $\mathbf{z_0}\in S(\mathbf{\hx_0, J})$, the indicator will fail to reconstruct the strip $S(\mathbf{\hx_0, J})$. The good news is that we have the freedom to choose the dipole position $\mathbf{z_0}$. Thus, by letting $\mathbf{z_0}$ move along the direction $\mathbf{\hx_0}$, we can always assume that $\mathbf{z_0}\notin S(\mathbf{\hx_0, J})$. To pick the correct strip, we introduce two techniques. The first one is to take a different dipole position $\mathbf{z_1}$ along the direction $\mathbf{\hx_0}$, and then the symmetric strip will change. The second one is to take the dipole position far away from the domain of interest.

We now explain why the indicator $I_{\bf \hx_0,z_0}$ can be used to reconstruct the strip $S(\mathbf{\hx_0, J})$.
Inserting \eqref{H} into \eqref{Indicatorphaseless}, straightforward calculations show that
\ben
I_{\bf \hx_0,z_0}{\bf(z)}=\left| \ov{\tau_1}G_{\mathbf{\hx_0}}(\mathbf{z,m})+\tau_1\ov{G_{\mathbf{\hx_0}}(\mathbf{z,m})}+\ov{\tau_1}G_{\mathbf{\hx_0}}(\mathbf{2z_0-z,m})+\tau_1\ov{G_{\mathbf{\hx_0}}(\mathbf{2z_0-z,m})}\right|
\enn
with the auxiliary $G_{\mathbf{\hx_0}}$ defined by \eqref{G}. Note that we have the freedom to choose the dipole position $\mathbf{z_0}$. Taking $\mathbf{z_0}:=r\mathbf{\hx_0}$ and letting $r\rightarrow\infty$, using the well known Riemann-Lebesgue Lemma, we find that
\ben
G_{\mathbf{\hx_0}}(\mathbf{2z_0-z,m})\rightarrow 0, \quad\mbox{as}\quad |\mathbf{z_0}|=r\rightarrow\infty.
\enn
Thus, we have
\ben
I_{\bf \hx_0,z_0}{\bf(z)}\approx\left|\ov{\tau_1}G_{\mathbf{\hx_0}}(\mathbf{z,m})+\tau_1\ov{G_{\mathbf{\hx_0}}(\mathbf{z,m})}\right|, \quad\mbox{as}\quad |\mathbf{z_0}|=r\rightarrow\infty.
\enn
Motivated by this, we expect that the novel indicator $I_{\bf \hx_0,z_0}{\bf(z)}$ behaves like the indicator $I_{\mathbf{\hx_0}}{\bf(z,e)}$ with phased data, and thus can be used to reconstruct the strip $S(\mathbf{\hx_0, J})$.\\

{\bf Strip Reconstruction Scheme Two: }
\begin{itemize}{\em
  \item (1). Collect the broadband electric far field pattern $|\mathbf{m}\cdot\bE^{\infty}_{\bf z_0}(\mathbf{\hx}_0,k;\mathbf{J},\tau,\mathbf{l})|,\quad k\in K,\,\,\tau\in \{0, \tau_1\}$ where $\mathbf{l}$ and $\mathbf{m}$ are given in \eqref{lm}.
  \item (2). Select a hyperplane $\Sigma$ parallel to the observation direction $\mathbf{\hx_0}$. Select a sampling region in $\Sigma$ with a fine mesh $\mathcal {Z}$ containing the projection of the source support $D$ in the hyperplane $\Sigma$,
  \item (3). Compute the indicator functional $I_{\bf \hx_0,z_0}{\bf(z)}$ for all sampling points $z\in\mathcal {Z}$,
  \item (4). Plot the indicator functional $I_{\bf \hx_0,z_0}{\bf(z)}$ }.
\end{itemize}\,\quad

The second way to deal with the phaseless problem {\bf IP3} is first reconstruct the phased data $\mathbf{m}\cdot\bE^{\infty}(\mathbf{\hx_0}, k; \mathbf{J})$ for all $k\in K$, and then using the known methods and results with phased data.
To do so, we recall a geometrical result on determining a point in the plane from the distances to three given points \cite{JLZ-obstacle}.
\begin{lemma}\label{phaseretrieval}
Let $z_j:=x_j+iy_j,\,j=1,2,3,$ be three different complex numbers such that they are not collinear.
Then there is at most one complex number $z\in\C$ with the distances $r_j=|z-z_j|,\,j=1,2,3$. Let further $\eps>0$ and assume that
\ben
|r_j^{\eps}-r_j|\leq\eps, \quad j=1,2,3.
\enn
Here, and throughout the paper, we use the subscript $\eps$ to denote the polluted data.
Then there exists a constant $c>0$ depending on $z_j, j=1,2,3$, such that
\ben
|z^{\eps}-z|\leq c\eps.
\enn
\end{lemma}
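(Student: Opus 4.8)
The plan is to identify $\C$ with $\R^2$, writing $z=(x,y)$ and $z_j=(x_j,y_j)$, so that the three distance conditions become the quadratic system
\[
r_j^2=(x-x_j)^2+(y-y_j)^2,\qquad j=1,2,3.
\]
The first step I would take is to \emph{linearize}: subtracting the first equation from the second and from the third cancels the quadratic term $x^2+y^2$ and leaves the two linear equations
\[
2(x_1-x_j)x+2(y_1-y_j)y=r_j^2-r_1^2+x_1^2-x_j^2+y_1^2-y_j^2,\qquad j=2,3.
\]
Writing these as $2M(x,y)^{\rm T}=\mathbf{b}$ with
\[
M:=\begin{pmatrix} x_1-x_2 & y_1-y_2\\ x_1-x_3 & y_1-y_3\end{pmatrix},
\]
the key observation is that $\det M=(x_1-x_2)(y_1-y_3)-(y_1-y_2)(x_1-x_3)$ is exactly twice the signed area of the triangle $z_1z_2z_3$, hence nonzero \emph{precisely} under the non-collinearity hypothesis. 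Invertibility of $M$ then forces any $z$ solving the distance system to equal the single solution $(x,y)^{\rm T}=\tfrac12 M^{-1}\mathbf{b}$ of the linear system, which gives the asserted uniqueness (``at most one'', since for arbitrary $r_j$ this linear solution need not satisfy the original quadratic equations).

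For the stability estimate I would run the same elimination on the noisy distances. The reconstructed point $z^\eps$ solves $2M(x^\eps,y^\eps)^{\rm T}=\mathbf{b}^\eps$ with the \emph{same} matrix $M$ and a right-hand side $\mathbf{b}^\eps$ obtained by replacing each $r_j$ by $r_j^\eps$. Subtracting the clean and noisy linear systems removes $M$'s dependence on the data and yields $2M(z^\eps-z)=\mathbf{b}^\eps-\mathbf{b}$, whence
\[
|z^\eps-z|\le\tfrac12\,\|M^{-1}\|\,|\mathbf{b}^\eps-\mathbf{b}|.
\]
Each entry of $\mathbf{b}^\eps-\mathbf{b}$ is a difference of two terms of the form $(r_j^\eps)^2-r_j^2=(r_j^\eps-r_j)(r_j^\eps+r_j)$, and the hypothesis $|r_j^\eps-r_j|\le\eps$ bounds each by $\eps(2r_j+\eps)$; this makes $|\mathbf{b}^\eps-\mathbf{b}|\le c'\eps$ and delivers the Lipschitz bound $|z^\eps-z|\le c\eps$ with $c=\tfrac12\|M^{-1}\|c'$.

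The part I expect to be delicate is pinning down the dependence of $c$. The factor $\|M^{-1}\|$ is controlled by $1/|\det M|$ and is finite \emph{exactly} because of non-collinearity, so it depends only on $z_1,z_2,z_3$ — this is the clean half. The awkward half is the factor $r_j^\eps+r_j$ in the estimate for $\mathbf{b}^\eps-\mathbf{b}$, which involves the unknown $z$ through the true distances and would grow without bound as $z$ recedes from the reference points. I would close this gap by restricting $z$ to a fixed bounded sampling region (as in the intended application, where $z$ ranges over a mesh), so that $r_j\le R$ uniformly and, for $\eps$ below a fixed threshold, $c$ can be taken to depend only on $z_1,z_2,z_3$ and $R$; equivalently, one may read the claim as the first-order-in-$\eps$ statement in which $r_j^\eps+r_j$ is absorbed into $c$.
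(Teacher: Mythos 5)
Your proof is correct, but it takes a genuinely different route from the one the paper relies on. The paper does not prove Lemma \ref{phaseretrieval} by linearization; it recalls the result from \cite{JLZ-obstacle}, and the argument behind it is the explicit geometric construction reproduced as the \textbf{Phase Retrieval Scheme}: the point $M$ is placed on the ray $z_2z_1$ at distance $r_2$ from $z_2$, the angle $\alpha$ comes from the law of cosines \eqref{cosalpha}, the two candidates $z_A,z_B$ are the rotations of $M$ about $z_2$ by $\mp\alpha$, and the third distance $r_3$ selects between them --- non-collinearity enters exactly there, since $z_A$ and $z_B$ are mirror images across the line $z_1z_2$, so $z_3$ off that line guarantees $|z_A-z_3|\neq|z_B-z_3|$ whenever $z_A\neq z_B$; stability is then read off from the closed-form expressions \eqref{xMyM}--\eqref{yB}. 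Your trilateration argument instead subtracts the squared-distance equations to obtain a linear system with the fixed matrix $M$, and non-collinearity enters as $\det M\neq 0$. What your route buys: uniqueness in one line, a transparent constant $c\sim\|M^{-1}\|\sim 1/|\det M|$, a reconstruction that remains well defined when the noisy distances are inconsistent, and Lipschitz stability even when $z$ lies near the line through $z_1,z_2$ --- a configuration in which the circle--circle intersection underlying the geometric scheme becomes tangential ($|\cos\alpha|\to 1$) and the formula $\sin\alpha=\sqrt{1-\cos^2\alpha}$ degrades to $O(\sqrt{\eps})$ accuracy. What the paper's route buys: it is the algorithm actually implemented numerically, working directly with the measured moduli, and its two-candidate structure is precisely what dictates the data requirement of three strengths $\tau_j$ in $\mathcal{T}$. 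Finally, the caveat you flag is real but not specific to your argument: in both approaches the constant also involves an upper bound on the $r_j$, so the lemma's claim that $c$ depends only on $z_1,z_2,z_3$ implicitly assumes $z$ ranges over a bounded set; in the application $z=\mathbf{m}\cdot\bE^{\infty}(\mathbf{\hx}_0,k;\mathbf{J})$ is uniformly bounded for $k\in K$, so your restriction to a bounded sampling region is the right reading and costs nothing.
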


Define
\ben
\mathcal {T}:= \{\tau_1,\tau_2,\tau_3\},
\enn
where $\tau_1, \tau_2, \tau_3\in\C$ are three different scattering strengths such that $\tau_2-\tau_1$ and $\tau_3-\tau_1$ are linearly independent.
Recall that
\ben
|\mathbf{m}\cdot\bE^{\infty}_{\bf z_0}(\mathbf{\hx}_0,k;\mathbf{J},\tau_j,\mathbf{l})|
= |\mathbf{m}\cdot\bE^{\infty}(\mathbf{\hx}_0,k;\mathbf{J})+ik\tau_j e^{-ik\mathbf{z_0\cdot\hx_0}}|,\quad k\in K,\quad j=1,2,3.
\enn
For any fixed observation direction $\mathbf{\hx_0}\in \bbS$, wave number $k\in K$, we set
\be
\label{rj}r_j&:=&|\mathbf{m}\cdot\bE^{\infty}_{\bf z_0}(\mathbf{\hx}_0,k;\mathbf{J},\tau_j,\mathbf{l})|\\
\label{z} z&:=&\mathbf{m}\cdot\bE^{\infty}(\mathbf{\hx}_0,k;\mathbf{J}),
\en
and
\be\label{zj}
z_j:=-ik\tau_j e^{-ik\mathbf{z_0\cdot\hx_0}},\quad j=1,2,3.
\en

Combining Lemma \ref{phaseretrieval} and the uniqueness Theorem \ref{uni-strip} with phased data, we immediately derive the following uniqueness theorem with phaseless data.

\begin{theorem}\label{uni-strip-phaseless}
For any fixed $\mathbf{\hx_{0}}\in\bbS$, choose a vector $\mathbf{q_0}\in\mathbb{S}^2$ such that $\mathbf{q_0\cdot\hx_0}\neq 0$ and then define two vectors $\mathbf{l}$ and $\mathbf{m}$ as in \eqref{lm}. If the set
\be\label{set2}
\{\alpha\in\R | \, \Pi_\alpha\subset S(\mathbf{\hx}_0; \mathbf{J}), \hat{\bf J}_{\mathbf{m}}(\alpha)=0\}
\en
has Lebesgue measure zero, then the strip $S(\mathbf{\hx}_0; \mathbf{J})$ of the source support $D$ can be uniquely determined by
the phaseless data $|\mathbf{m}\cdot\bE^{\infty}_{\bf z_0}(\mathbf{\hx}_0,k;\mathbf{J},\tau,\mathbf{l})|$ for all $k\in K,\,\tau\in \mathcal {T}$ at the observation direction $\mathbf{\hx_{0}}\in\bbS$.
\end{theorem}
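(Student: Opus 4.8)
The plan is to decouple the argument into two independent pieces that are already available in the excerpt: first use the geometric phase retrieval result of Lemma~\ref{phaseretrieval} to recover, at each wavenumber, the complex scalar phased datum $z=\mathbf{m}\cdot\bE^{\infty}(\mathbf{\hx_0},k;\mathbf{J})$ from the three phaseless measurements; then feed the recovered function $k\mapsto \mathbf{m}\cdot\bE^{\infty}(\mathbf{\hx_0},k;\mathbf{J})$ into the phased uniqueness Theorem~\ref{uni-strip} (specialized to $\mathbf{e}=\mathbf{m}$) to pin down the strip. Thus the theorem is essentially the composition of two facts, and most of the work is in checking that the hypotheses of Lemma~\ref{phaseretrieval} genuinely hold.

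First I would record the algebraic identity that makes the reduction possible. Combining the superposition \eqref{Einfsum} with the dipole far field \eqref{Ezinf} and using $\mathbf{\hx_0}\times\mathbf{l}=\mathbf{m}$ from \eqref{lm} together with the orthonormality of $\{\mathbf{\hx_0,l,m}\}$, the measured projection factors as
\[
\mathbf{m}\cdot\bE^{\infty}_{\bf z_0}(\mathbf{\hx}_0,k;\mathbf{J},\tau_j,\mathbf{l})=\mathbf{m}\cdot\bE^{\infty}(\mathbf{\hx}_0,k;\mathbf{J})+ik\tau_j e^{-ik\mathbf{z_0\cdot\hx_0}}=z-z_j,
\]
with $z$ and $z_j$ exactly as defined in \eqref{z}--\eqref{zj}. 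Hence the three phaseless data $r_j$ in \eqref{rj} are precisely the three distances $r_j=|z-z_j|$, $j=1,2,3$, demanded by Lemma~\ref{phaseretrieval}, so the recovery of the phase reduces to the planar three-distance problem.

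The step I expect to be the crux is verifying the non-collinearity of the base points $z_1,z_2,z_3$, uniformly in $k$. From \eqref{zj} one has $z_j-z_1=-ik(\tau_j-\tau_1)e^{-ik\mathbf{z_0\cdot\hx_0}}$ for $j=2,3$, so the two difference vectors arise from $\tau_2-\tau_1$ and $\tau_3-\tau_1$ by multiplication by the single nonzero complex number $-ike^{-ik\mathbf{z_0\cdot\hx_0}}$. Since such a multiplication is an invertible $\R$-linear map of $\C\cong\R^2$, it preserves $\R$-linear independence; the assumed independence of $\tau_2-\tau_1$ and $\tau_3-\tau_1$ therefore forces $z_2-z_1$ and $z_3-z_1$ to be linearly independent, i.e.\ $z_1,z_2,z_3$ are not collinear, for every $k\in K$ (note $k>0$ keeps the factor nonzero throughout $K$). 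Lemma~\ref{phaseretrieval} then gives at most one $z$ consistent with the three measured distances, so $\mathbf{m}\cdot\bE^{\infty}(\mathbf{\hx_0},k;\mathbf{J})$ is uniquely determined at each fixed $k$; letting $k$ range over $K$ recovers the entire phased function. With that function in hand, I would invoke Theorem~\ref{uni-strip} with $\mathbf{e}=\mathbf{m}$: the measure-zero hypothesis \eqref{set2} is exactly \eqref{set1} for $\mathbf{e}=\mathbf{m}$, so the strip $S(\mathbf{\hx}_0;\mathbf{J})$ is uniquely determined, which completes the proof.
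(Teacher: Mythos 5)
Your proposal is correct and follows exactly the paper's route: the paper proves this theorem in one line by "combining Lemma~\ref{phaseretrieval} and the uniqueness Theorem~\ref{uni-strip}," relying on the identifications \eqref{rj}--\eqref{zj} set up just beforehand. Your write-up merely fills in the details the paper leaves implicit—most usefully the check that multiplication by the nonzero complex factor $-ike^{-ik\mathbf{z_0\cdot\hx_0}}$ preserves the $\R$-linear independence of $\tau_2-\tau_1$ and $\tau_3-\tau_1$, so that $z_1,z_2,z_3$ are non-collinear for every $k\in K$—which is precisely the justification the paper's assumption on $\mathcal{T}$ is designed to provide.
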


With the representations \eqref{rj} and \eqref{zj}, we use the following stable phase retrieval scheme \cite{JLZ-obstacle} to determine the phased data $\mathbf{m}\cdot\bE^{\infty}(\mathbf{\hx_0}, k; \mathbf{J})$ for all $k\in K$.\\

\begin{figure}[htbp]
\centering
\includegraphics[width=2.5in]{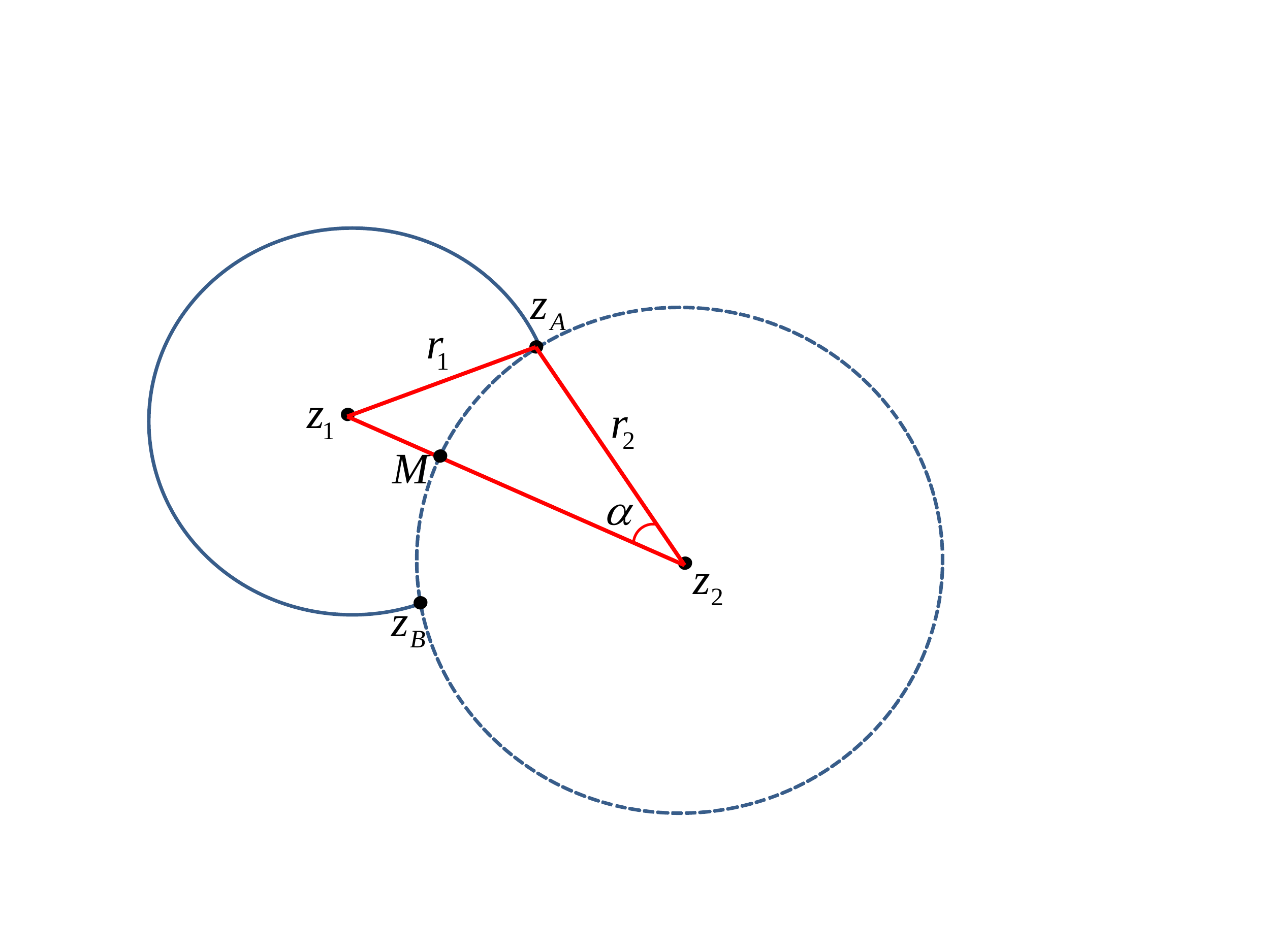}
\caption{Sketch map for phase retrieval scheme.}
\label{twodisks}
\end{figure}

{\bf Phase Retrieval Scheme \cite{JLZ-obstacle}:}

\begin{itemize}{\em
  \item (1). {\bf\rm Collect the distances $r_j:=|z-z_j|$ with given complex numbers $z_j,\,j=1,2,3$.} If $r_j=0$ for some $j\in\{1,2,3\}$, then $z=z_j$. Otherwise, go to next step.
  \item (2). {\bf\rm Look for the point $M=(x_M, y_M)$.} As shown in Figure \ref{twodisks}, $M$ is the intersection of circle centered at $Z_2$ with radius $r_2$ and the ray $z_2z_1$ with initial point $z_2$. Denote by $d_{1,2}:=|z_1-z_2|$ the distance between $z_1$ and $z_2$, then
       \be\label{xMyM}
       x_M=\frac{r_2}{d_{1,2}}x_1+\frac{d_{1,2}-r_2}{d_{1,2}}x_2,\quad y_M=\frac{r_2}{d_{1,2}}y_1+\frac{d_{1,2}-r_2}{d_{1,2}}y_2,
       \en
  \item (3). {\bf\rm Look for the points $z_A=(x_A, y_A)$ and $z_B=(x_B, y_B)$.} Note that $z_A$ and $z_B$ are just two rotations of $M$ around the point $z_2$. Let $\alpha\in [0,\pi]$ be the angle between rays $z_2z_1$ and $z_2z_A$. Then, by the law of cosines, we have
       \be\label{cosalpha}
       \cos \alpha=\frac{r_2^2+d_{1,2}^2- r_1^2}{2r_2d_{1,2}}.
       \en
       Note that $\alpha\in [0,\pi]$ and $\sin^2 \alpha+\cos^2 \alpha=1$, we deduce that $\sin \alpha=\sqrt{1-\cos^2 \alpha}$.
       Then
       \be
       \label{xA}x_A &=& x_2+\Re\{[(x_M-x_2)+i(y_M-y_2)]e^{-i\alpha}\},\\
       \label{yA}y_A &=& y_2+\Im\{[(x_M-x_2)+i(y_M-y_2)]e^{-i\alpha}\},\\
       \label{xB}x_B &=& x_2+\Re\{[(x_M-x_2)+i(y_M-y_2)]e^{i\alpha}\},\\
       \label{yB}y_B &=& y_2+\Im\{[(x_M-x_2)+i(y_M-y_2)]e^{i\alpha}\}.
       \en
  \item (4). {\bf\rm Determine the point $z$. $z=z_A$ if the distance $|z_Az_3|=r_3$, or else $z=z_B$.}}\\
\end{itemize}

Finally, we introduce the third strip reconstruction scheme using multi-frequency data at a single observation direction.\\

{\bf Strip Reconstruction Scheme Three: }
\begin{itemize}{\em
  \item (1). Collect the broadband electric far field pattern $|\mathbf{m}\cdot\bE^{\infty}_{\bf z_0}(\mathbf{\hx}_0,k;\mathbf{J},\tau,\mathbf{l})|,\quad k\in K,\,\,\tau\in \mathcal {T}$ where $\mathbf{l}$ and $\mathbf{m}$ are given in \eqref{lm}.
  \item (2). Compute the phased data $\mathbf{m}\cdot\bE^{\infty}(\mathbf{\hx_0}, k; \mathbf{J})$ for all $k\in K$ by using the {\rm\bf Phase Retrieval Scheme}.
  \item (3). Reconstruct the strip by using the {\rm\bf Strip Reconstruction Scheme One}.}
\end{itemize}\,\quad

\section{Numerical examples}\label{Sec-Num}

Now we present a variety of numerical examples in three dimensions to illustrate the applicability, effectiveness and robustness of our sampling methods with broadband sparse data.
The forward problems are computed  using equation \eqref{Einf}. We consider $ {\bf J}=(3/2,3\sqrt{3}/2,3/2)^{\rm T}$ and  five different supports of $ {\bf J}$.
\begin{itemize}
\item $ {\bf S_1}$: cubic $[0,1]^3$;
\item $ {\bf S_2}$: unit ball centered at the origin;
\item $ {\bf S_3}$: cubic $[0,1]^3$ and unit ball centered at $(3,3,0)^{\rm T}$;
\item $ {\bf S_4}$: L-shaped domain $[0,4]\times[0,4]\times [0,1]\backslash [1,4]\times[1,1]\times [0,1]$;
\item $ {\bf S_5}$: cuboid $[0,2]\times[0,1]\times [0,1]$.
\end{itemize}

For any fixed $\mathbf{\hx_{0}}\in\bbS$,  we assume to have multiple frequency phaseless far field
data $\bE^{\infty}(\mathbf{\hx}_0,k_j;\mathbf{J}),j=1,\cdots,M$, where $M=30,k_{min}=9.5,k_{max}=24$ such that  $k_j=(j-1)\times0.5+k_{min}$. We further perturb this data by relative error
\ben
|\mathbf{m}\cdot\bE^{\infty,\delta}_{\bf z_0}(\mathbf{\hx}_0,k_j;\mathbf{J},\tau,\mathbf{l})|= |\mathbf{m}\cdot\bE^{\infty}_{\bf z_0}(\mathbf{\hx}_0,k_j;\mathbf{J},\tau,\mathbf{l})|(1+\delta*\bbespilon), \quad j=1,2,\cdots, M,
\enn
where $\bbespilon$ is a uniformly distributed random number in the open interval $(-1,1)$. The value of $\delta$ is the error level.
We also consider absolute error in {\bf Example PhaseRetrieval}. In this case, we perturb the phaseless data in the form
\ben
|\mathbf{m}\cdot\bE^{\infty,\delta}_{\bf z_0}(\mathbf{\hx}_0,k_j;\mathbf{J},\tau,\mathbf{l})|= \max\Big\{0,\,|\mathbf{m}\cdot\bE^{\infty}_{\bf z_0}(\mathbf{\hx}_0,k_j;\mathbf{J},\tau,\mathbf{l})|+\delta*\bbespilon\Big\},\quad j=1,2,\cdots, M,
\enn
In the {\bf Strip Reconstruction Scheme One}, we perturb $\bE^{\infty}(\mathbf{\hx}_0,k_j;\mathbf{J}),j=1,\cdots,M$ directly in the same way.

In our numerical examples, we give the ${\bf x-y}$ plane projection and ${\bf y-z}$ plane projection of the reconstruction.  For the ${\bf x-y}$ plane projection, we choose the observation direction $\mathbf{\hx_{0}}$ as
$(\cos \theta_j, \sin \theta_j,0)^{\rm T}, \theta_j=(j-1)\pi/N, j=1,\cdots, N$. $N$ is the number of observation directions. For the ${\bf y-z}$ plane projection, we choose the observation direction $\mathbf{\hx_{0}}$ as
$(0, \cos \theta_j, \sin \theta_j)^{\rm T}, \theta_j=(j-1)\pi/N, j=1,\cdots, N$. $N=1,2,20$ are used.

In the simulations, we use $0.05$ as the sampling space. $\tau=0.1$ is used in {\bf Strip Reconstruction Scheme Two} and $\tau=\pm 0.1, 0.1i, {\bf z_0}=(2,2,0)^{\rm T}$ are used in {\bf Strip Reconstruction Scheme Three}. If not otherwise
stated, $10\%$ relative error is added and ${\bf x-y}$ plane projection is considered.

\subsection{{\bf Strip Reconstruction Scheme One} for cubic support (${\bf S_1}$)}
In this example, we consider the cubic support reconstruction with $1, 2$ and $20$ observation directions. Fig. \ref{square1} (a)
clearly shows that the source support lies in a strip, which is perpendicular to the observation direction. In Fig. \ref{square1} (b), since
the observation directions are perpendicular to each other, the strips are perpendicular to each
other too. The source support must be located in the cross sections of the two strips.  In Fig. \ref{square1} (c), with the increase of number of the observation directions, the location and size of the support are reconstructed correctly.\\

\begin{figure}[htbp]
  \centering
  \subfigure[\textbf{One observation direction.}]{
    \includegraphics[width=1.5in]{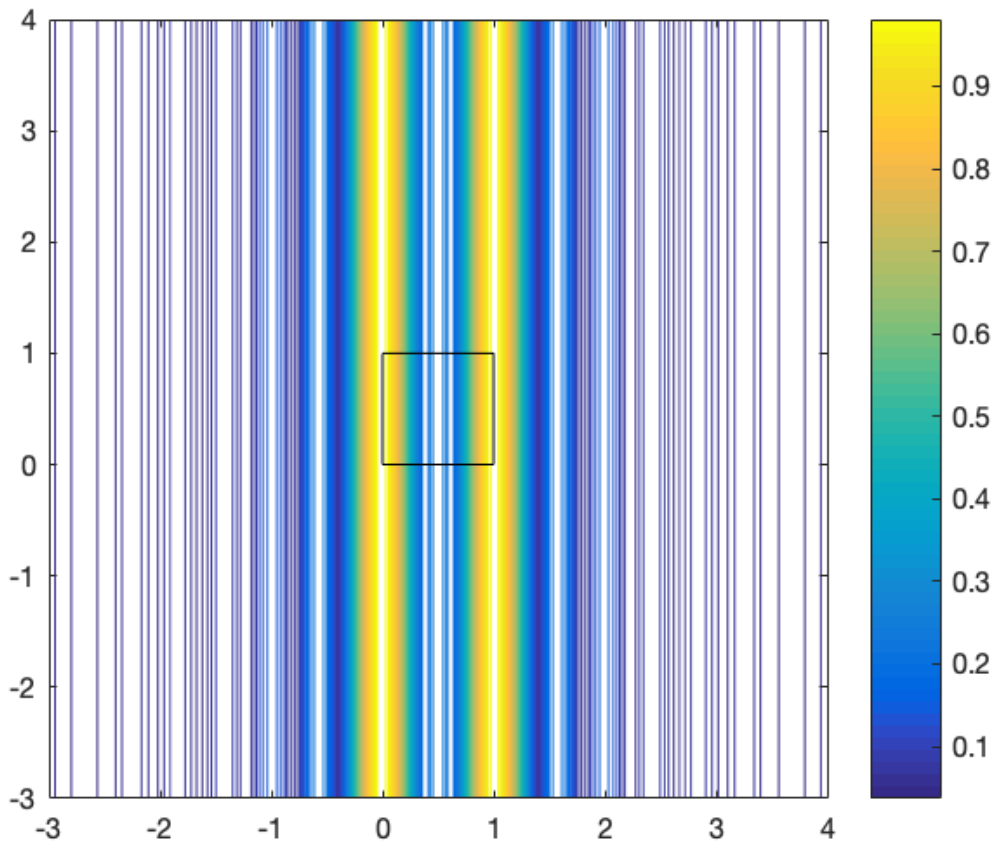}}
  \subfigure[\textbf{Two observation directions.}]{
    \includegraphics[width=1.5in]{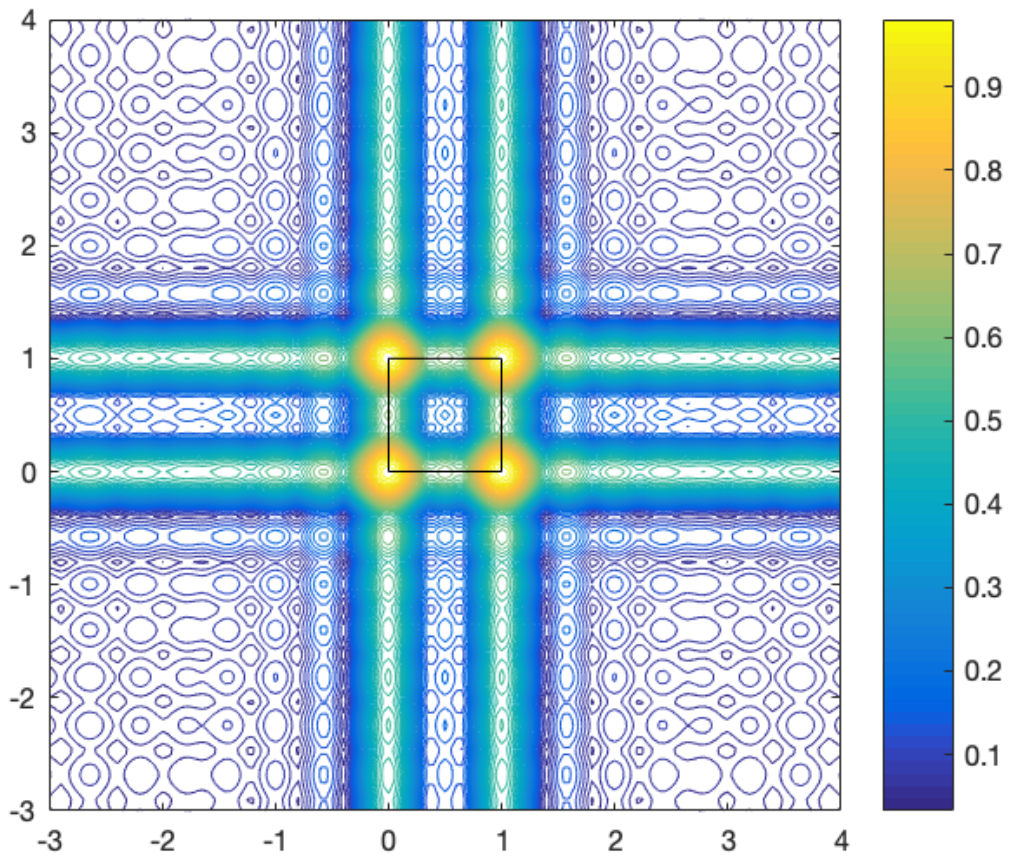}}
  \subfigure[\textbf{Twenty observation directions.}]{
    \includegraphics[width=1.5in]{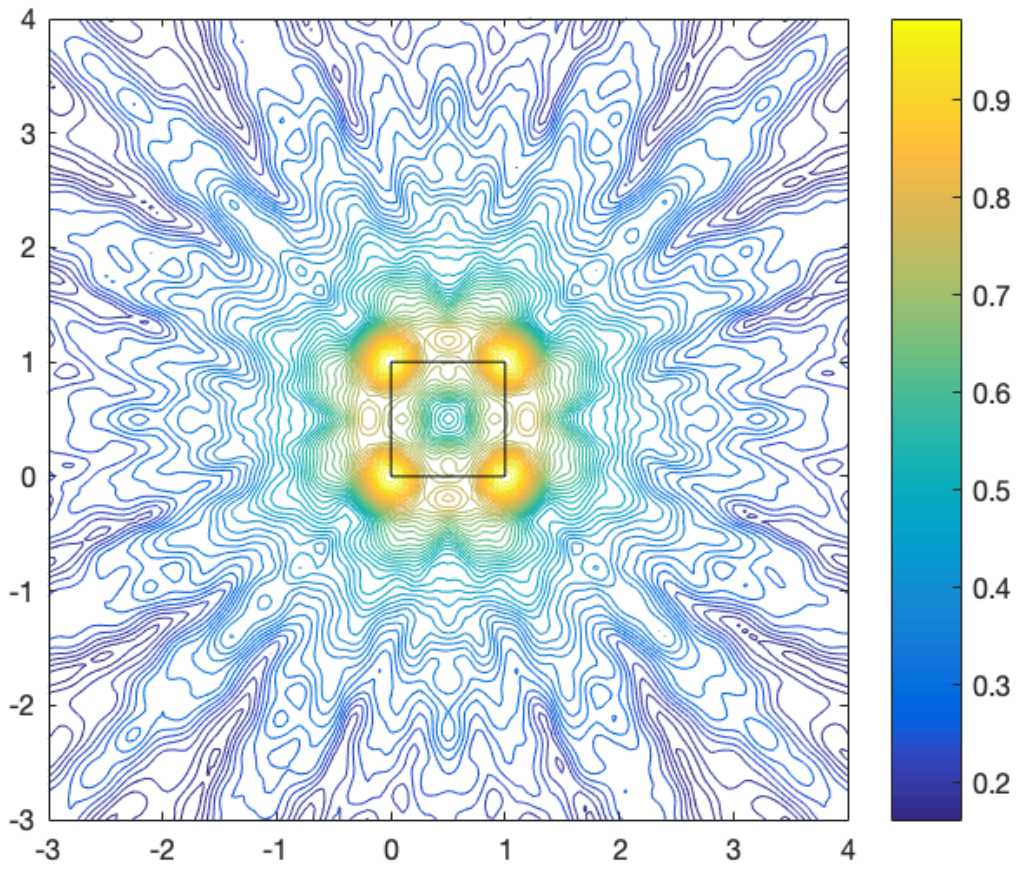}}
\caption{{\bf Strip Reconstruction Scheme One} with different observation directions for cubic support (${\bf S_1}$).}
\label{square1}
\end{figure}

\subsection{{\bf Strip Reconstruction Scheme One} for ball support (${\bf S_2}$)}
In this example, we consider the ball support reconstruction with $1, 2$ and $20$ observation directions. The phenomenon is similar to the previous example. We give the results in Fig. \ref{ball1}. The support is clearly reconstructed in Fig. \ref{ball1} (c). \\

\begin{figure}[htbp]
  \centering
  \subfigure[\textbf{One observation direction.}]{
    \includegraphics[width=1.5in]{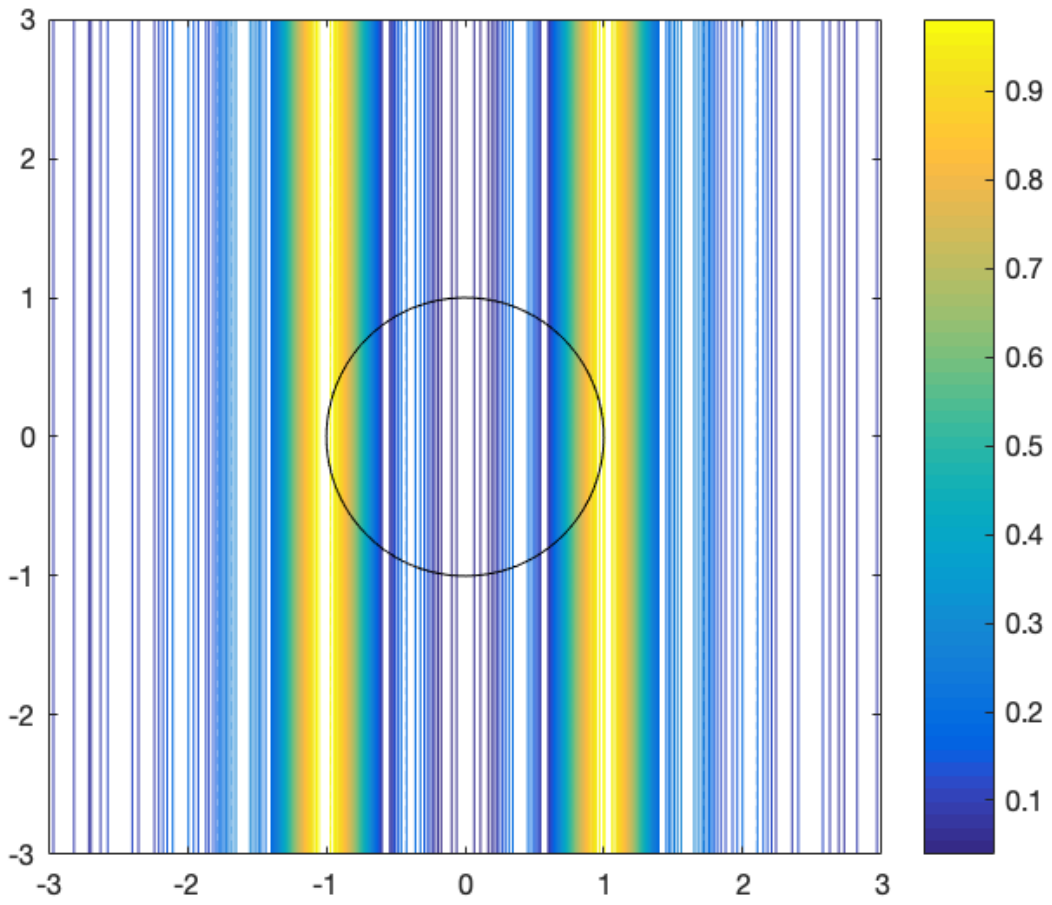}}
  \subfigure[\textbf{Two observation directions.}]{
    \includegraphics[width=1.5in]{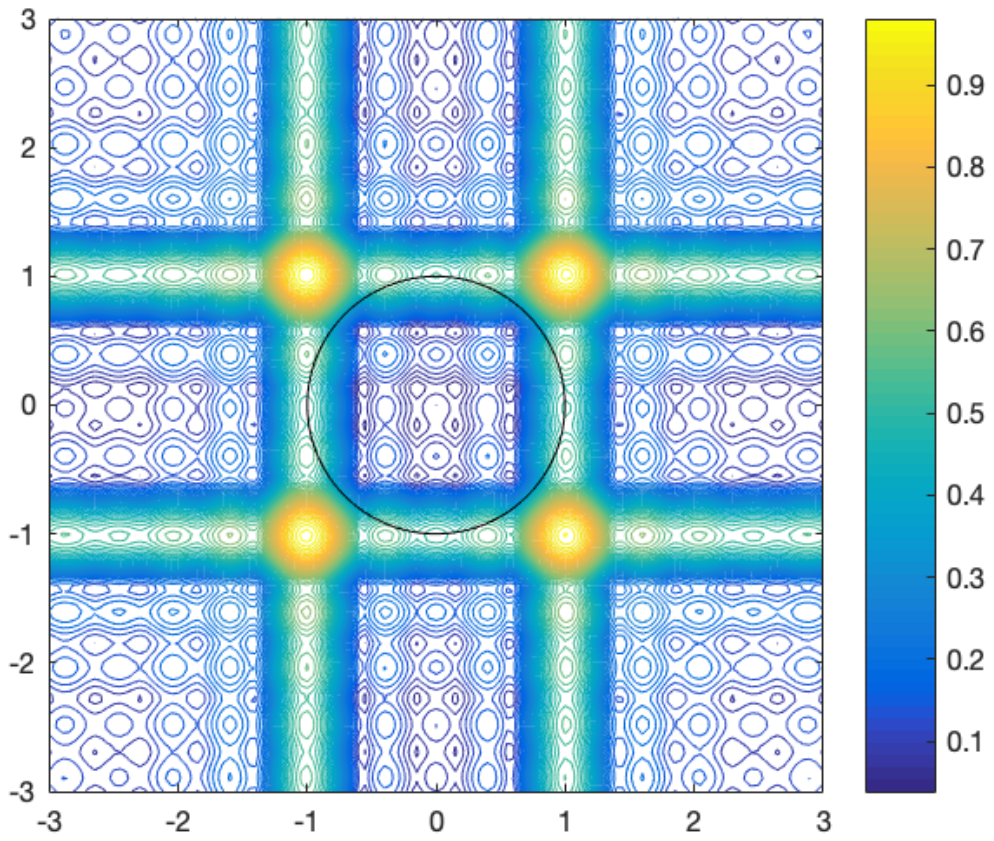}}
  \subfigure[\textbf{Twenty observation directions.}]{
    \includegraphics[width=1.5in]{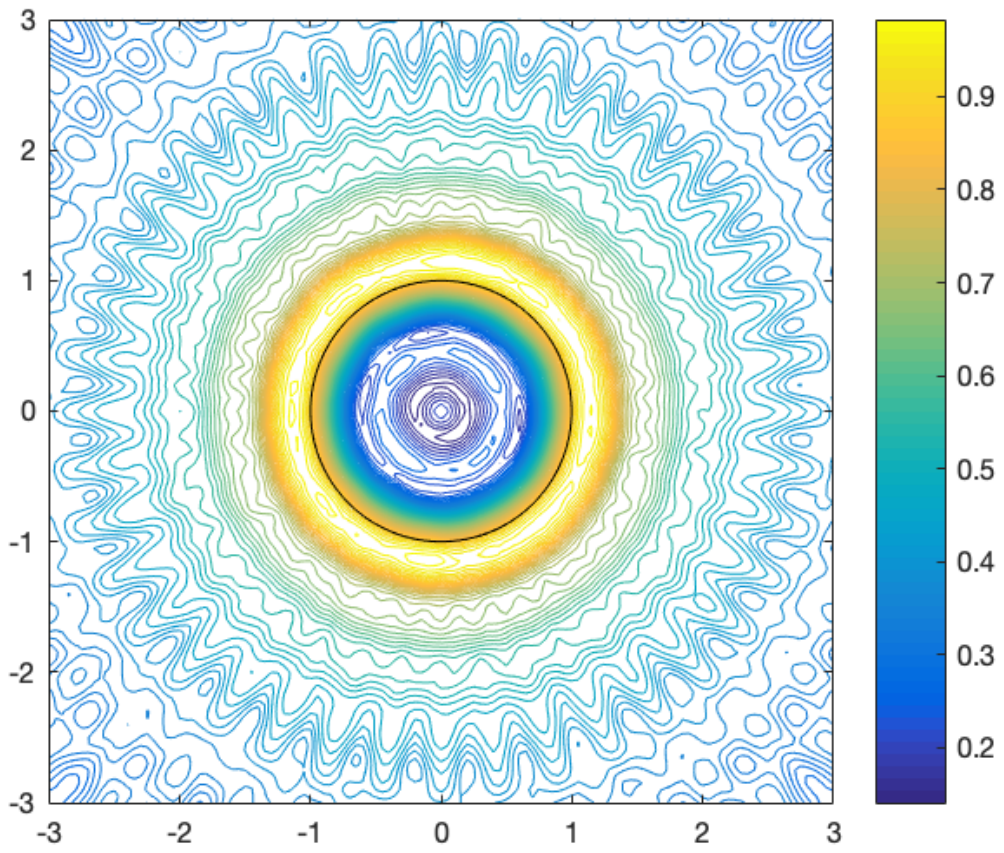}}
\caption{{\bf Strip Reconstruction Scheme One} with different observation directions for ball support (${\bf S_2}$).}
\label{ball1}
\end{figure}

\subsection{{\bf Strip Reconstruction Scheme One} for other supports (${\bf S_3}, {\bf S_4}, {\bf S_5}$)}
In this example, we consider the support reconstructions for ${\bf S_3}, {\bf S_4}$ and ${\bf S_5}$ with  $20$ observation directions. In Fig. \ref{S3S4} (a), the cubic and ball are both well constructed. The Fig. \ref{S3S4} (b) gives the L-shaped domain. In Fig. \ref{S5}, we give the ${\bf x-y}$ projection and ${\bf y-z}$ projection of the cuboid.

\begin{figure}[htbp]
  \centering
  \subfigure[\textbf{Cubic+ball.}]{
    \includegraphics[width=1.5in]{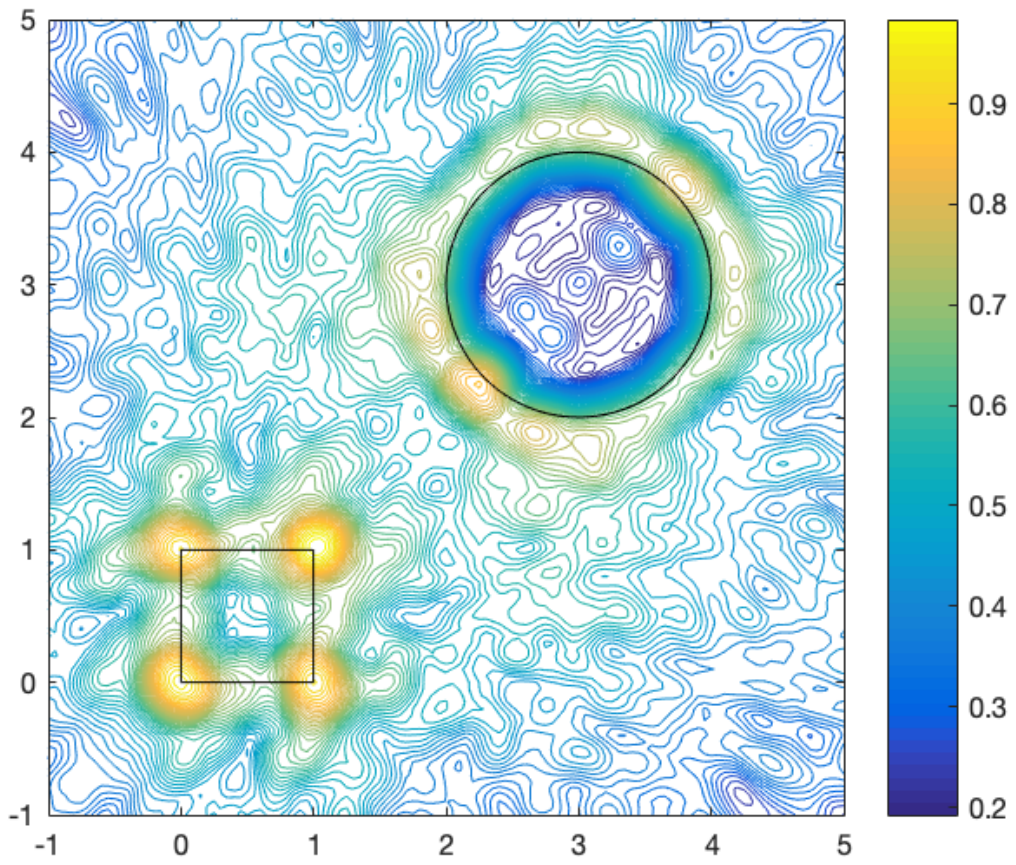}}
  \subfigure[\textbf{L-shaped domain.}]{
    \includegraphics[width=1.5in]{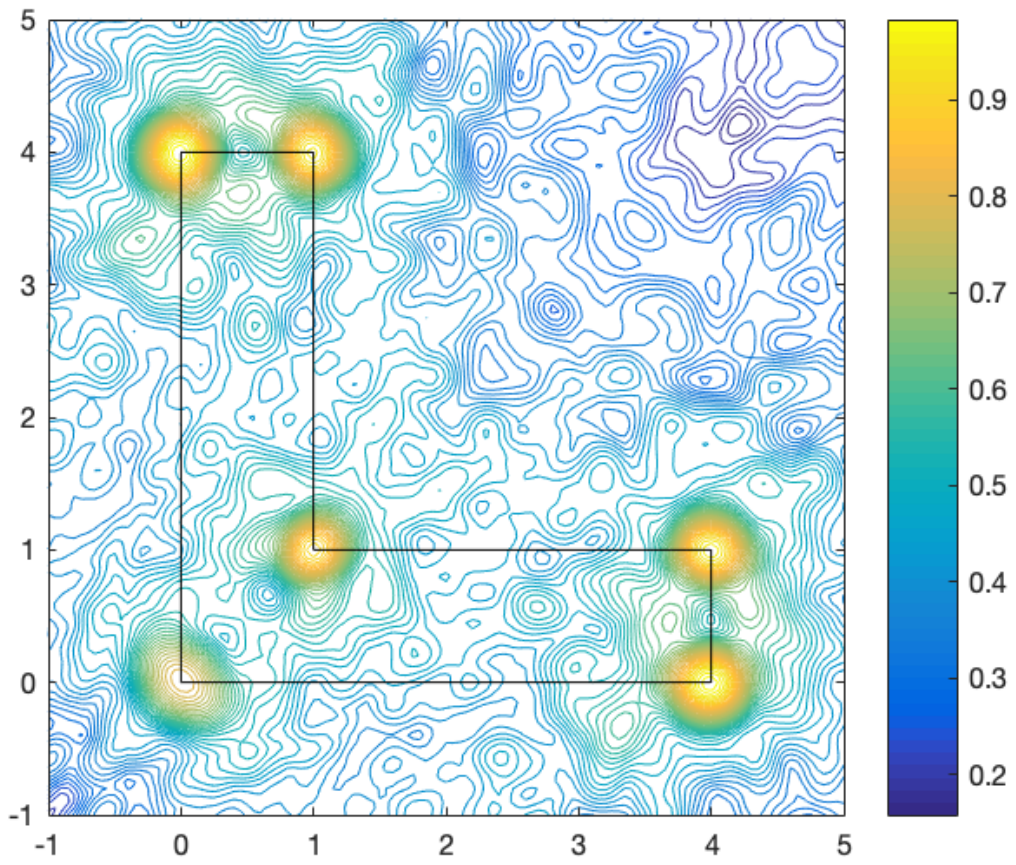}}
\caption{{\bf Strip Reconstruction Scheme One} with 20 observation directions for supports ${\bf S_3}$ and ${\bf S_4}$.}
\label{S3S4}
\end{figure}

\begin{figure}[htbp]
  \centering
  \subfigure[\textbf{$x-y$ projection.}]{
    \includegraphics[width=1.5in]{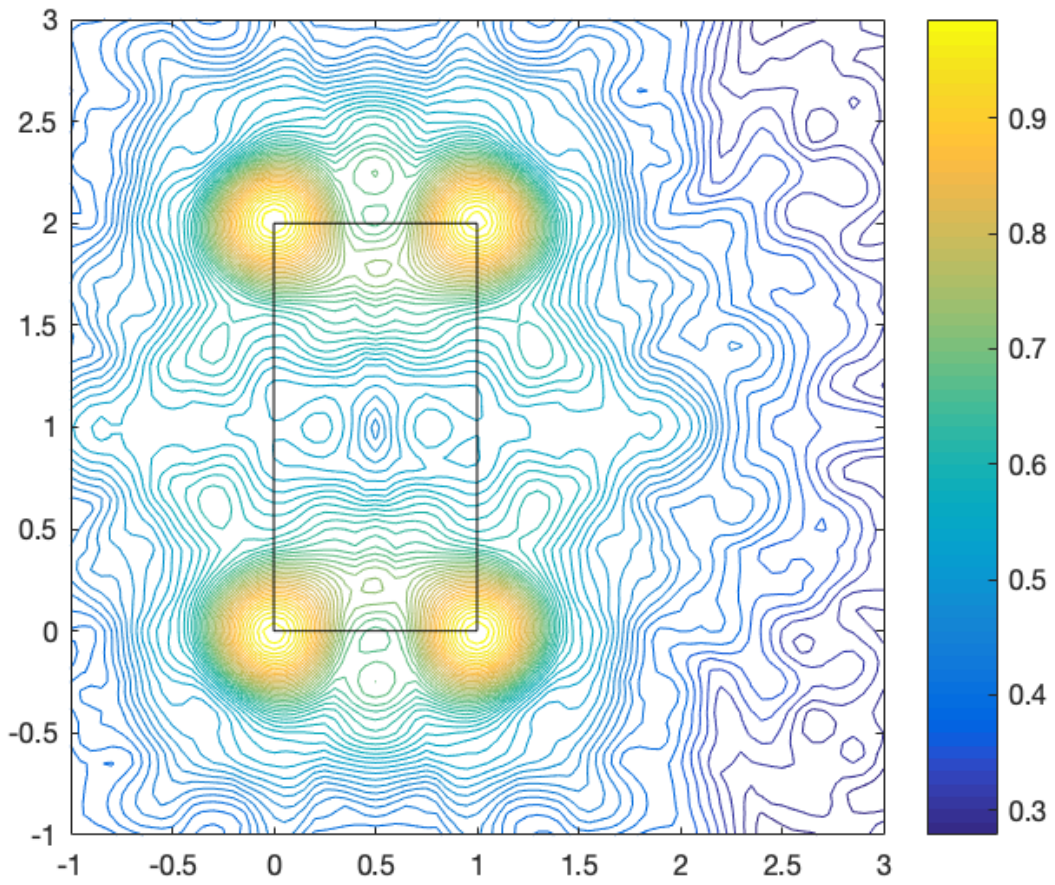}}
  \subfigure[\textbf{$y-z$ projection.}]{
    \includegraphics[width=1.5in]{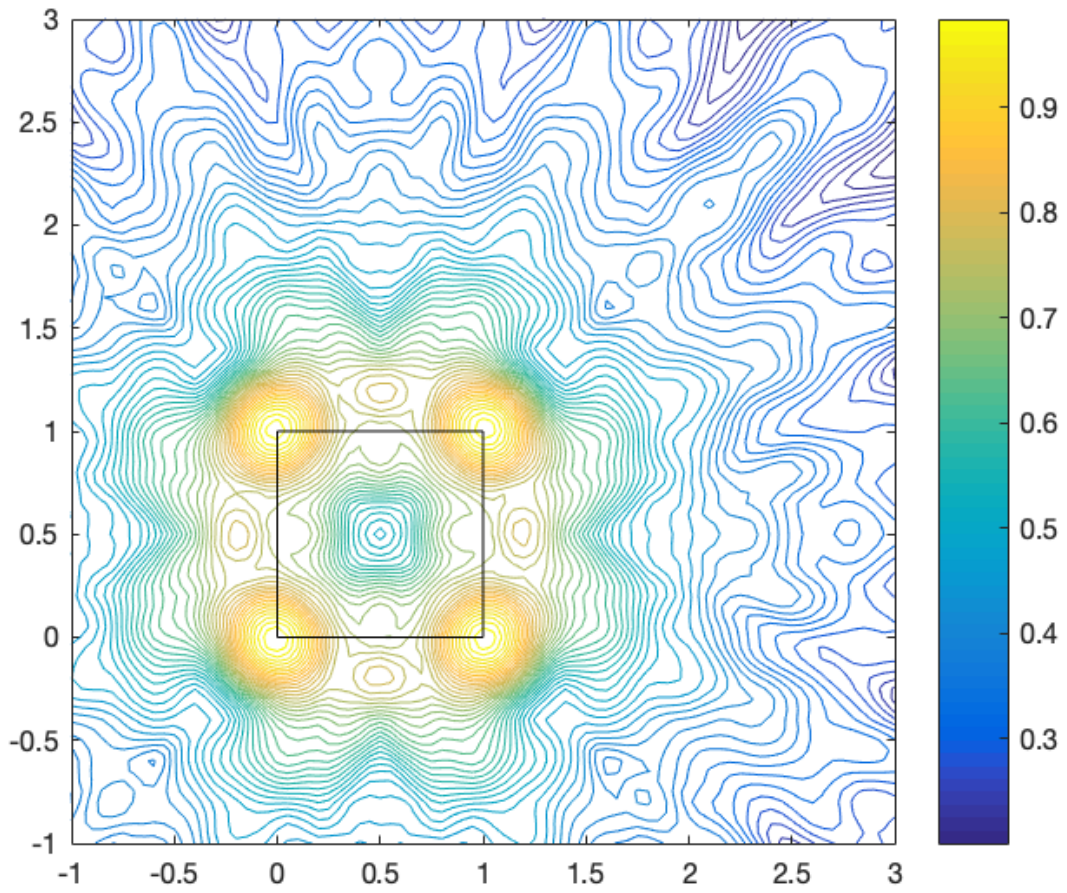}}
\caption{{\bf Strip Reconstruction Scheme One} with 20 observation directions for cuboid support ${\bf S_5}$.}
\label{S5}
\end{figure}

\subsection{{\bf Strip Reconstruction Scheme Two} for cubic support (${\bf S_1}$)}
We first consider the case of one observation direction using different $\bf {z_0}$. In Fig. \ref{square2}, we plot the indicators using $\bf\hx_0= (1,0,0)^{\rm T}$ and three reference points ${\bf z_0} = (0.5,2,0)^{\rm T}, {\bf z_0}  = (2,2,0)^{\rm T}$ and ${\bf z_0}  = (4,4,0)^{\rm T}$. The picture clearly shows that the source support and the corresponding point symmetric domain (with respect to $\bf z_0$) lies in a strip, which is perpendicular to the observation direction.
\begin{figure}[htbp]
  \centering
  \subfigure[\textbf{${\bf z_0} = (0.5,2,0)^{\rm T}$.}]{
    \includegraphics[width=1.5in, height=1.3in]{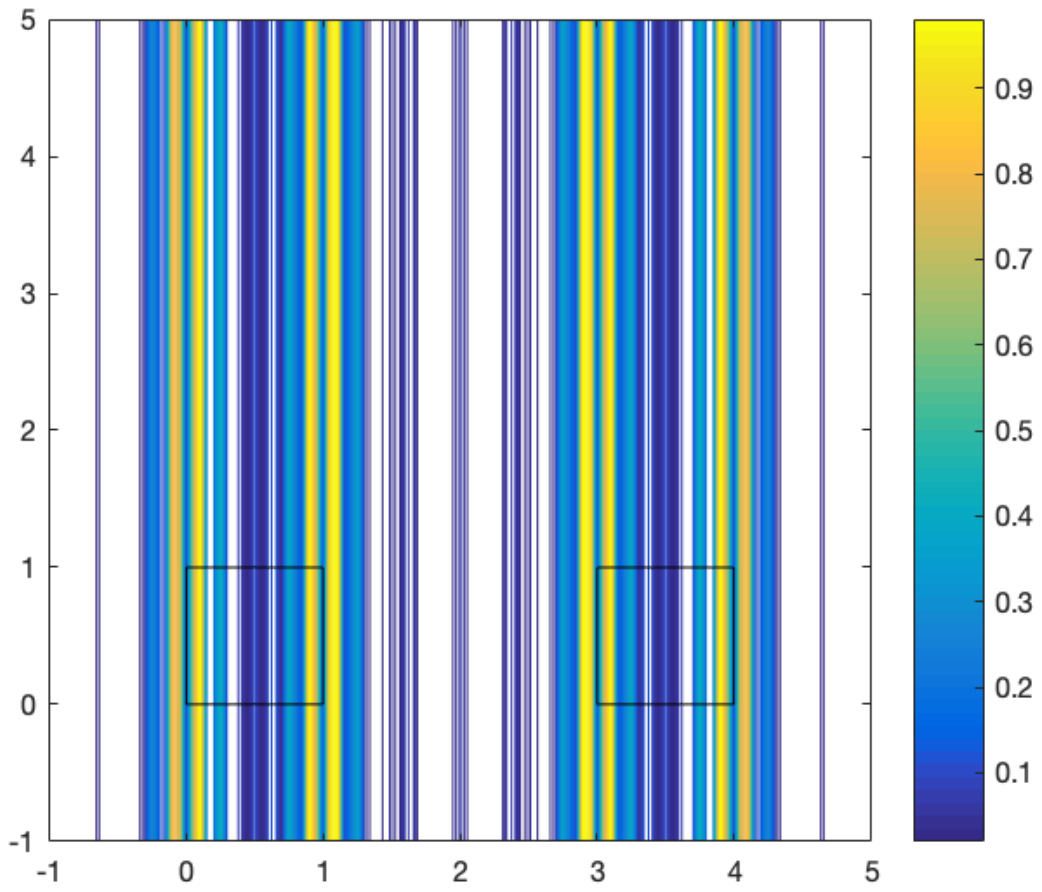}}
  \subfigure[\textbf{${\bf z_0} = (2,2,0)^{\rm T}$.}]{
    \includegraphics[width=1.5in]{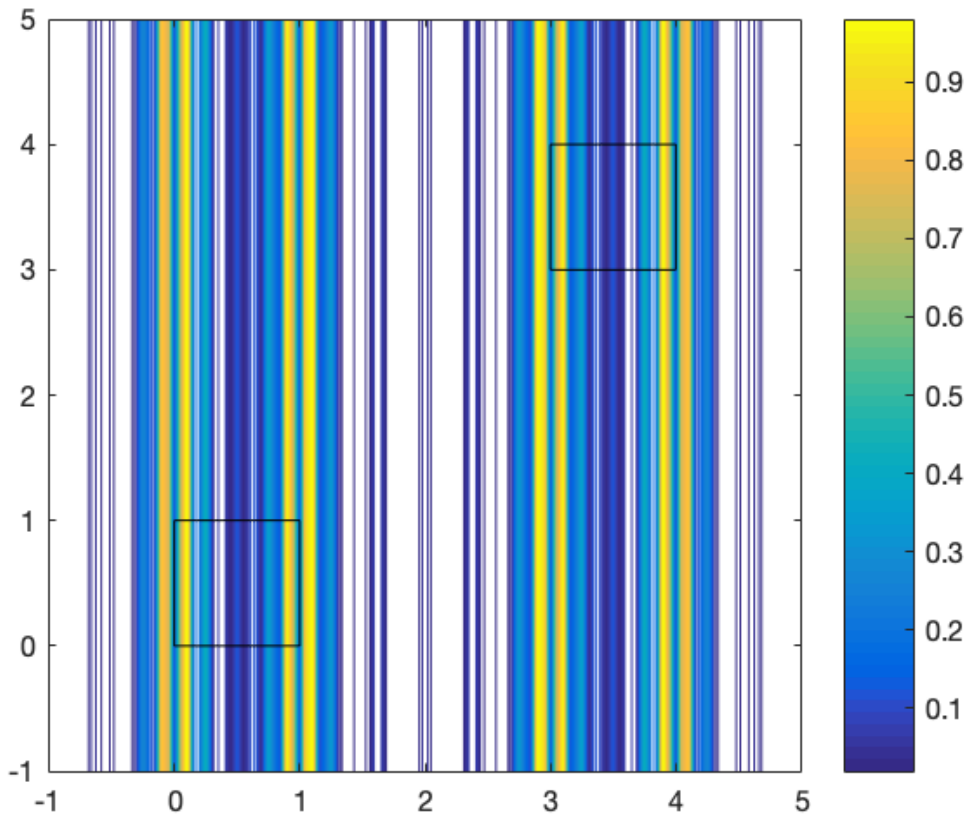}}
  \subfigure[\textbf{${\bf z_0} = (4,4,0)^{\rm T}$.}]{
    \includegraphics[width=1.5in]{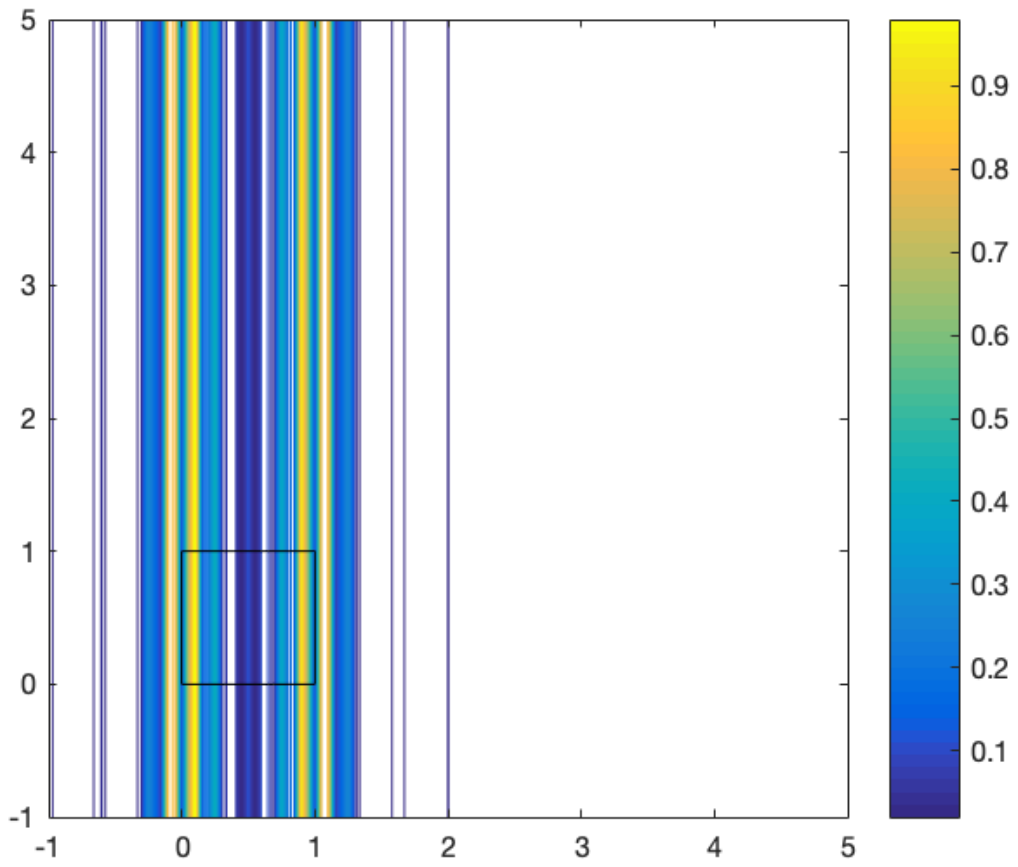}}
\caption{{\bf Strip Reconstruction Scheme Two} with different  $\bf {z_0}$ and one observation direction with cubic support (${\bf S_1}$).}
\label{square2}
\end{figure}

Next we consider two observation directions $(1,0,0)^{\rm T}$ and $(0,1,0)^{\rm T}$, we plot the indicators in Fig. \ref{square3}.  The source support also can be located in the cross sections of the two strips. To find the correct source support, one may consider choosing a reference point far away from the sampling domain (as shown in Figure \ref{square3}(c)), or using multiple observation directions.

\begin{figure}[htbp]
  \centering
  \subfigure[\textbf{${\bf z_0} = (0.5,2,0)^{\rm T}$.}]{
    \includegraphics[width=1.5in, height=1.3in]{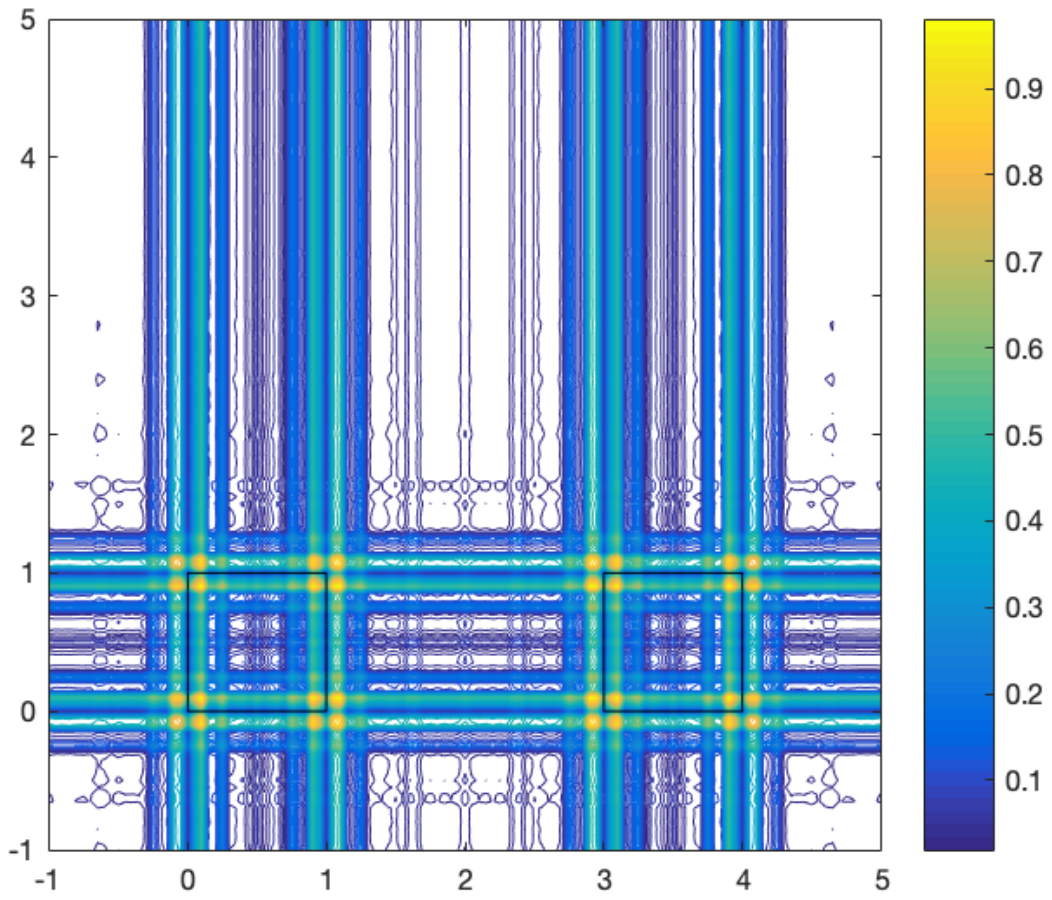}}
  \subfigure[\textbf{${\bf z_0} = (2,2,0)^{\rm T}$.}]{
    \includegraphics[width=1.5in]{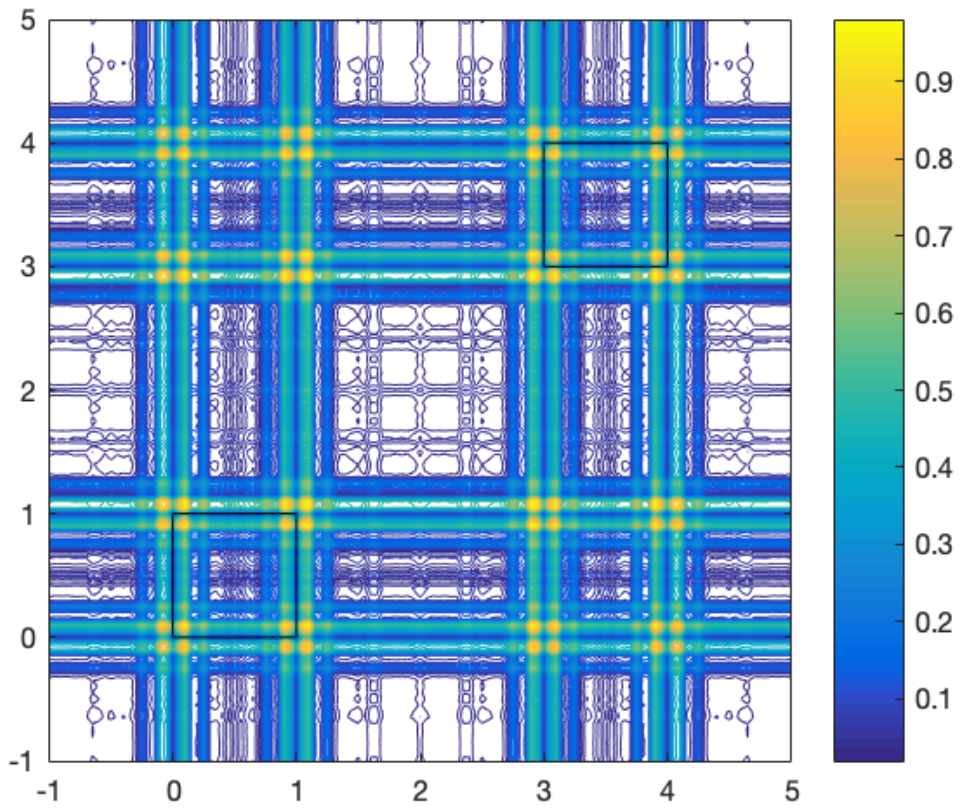}}
  \subfigure[\textbf{${\bf z_0} = (4,4,0)^{\rm T}$.}]{
    \includegraphics[width=1.5in]{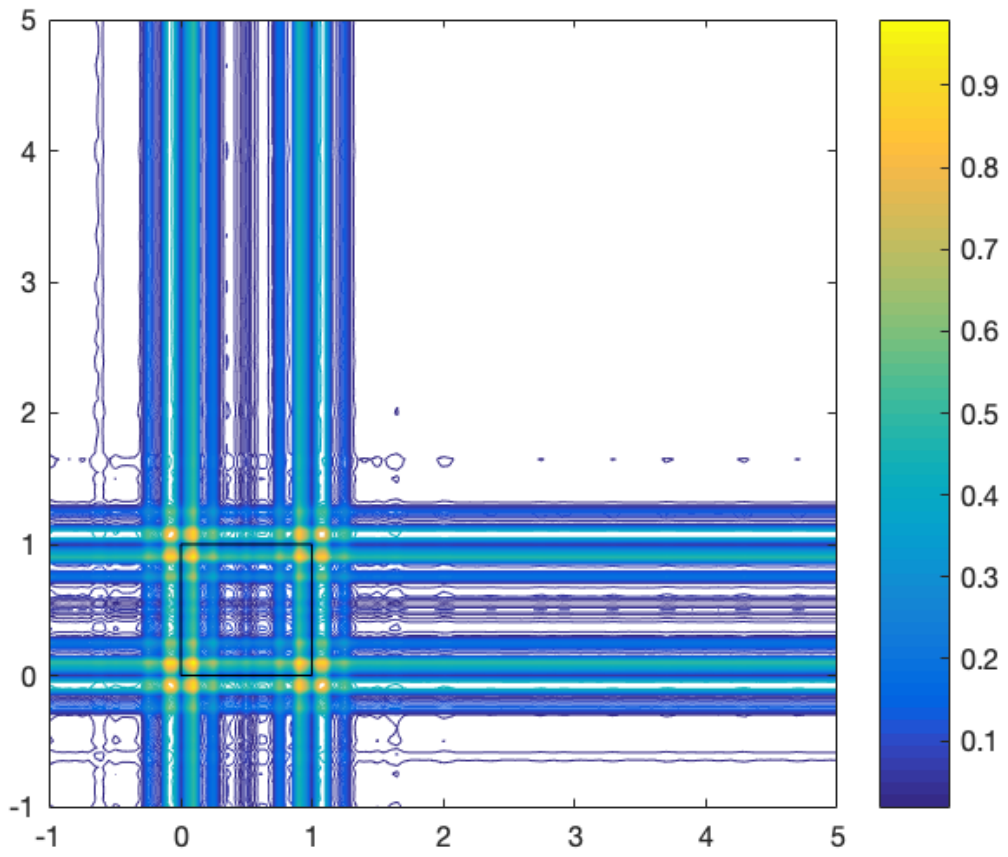}}
\caption{{\bf Strip Reconstruction Scheme Two} with different  $\bf {z_0}$ and two observation directions with cubic support (${\bf S_1}$).}
\label{square3}
\end{figure}

Now we use 20 observation directions. Fig. \ref{square4} gives the results for
cubic support with different $\bf {z_0}$. The locations and sizes
of support are reconstructed correctly.

\begin{figure}[htbp]
  \centering
  \subfigure[\textbf{${\bf z_0} = (0.5,2,0)^{\rm T}$.}]{
    \includegraphics[width=1.5in, height=1.3in]{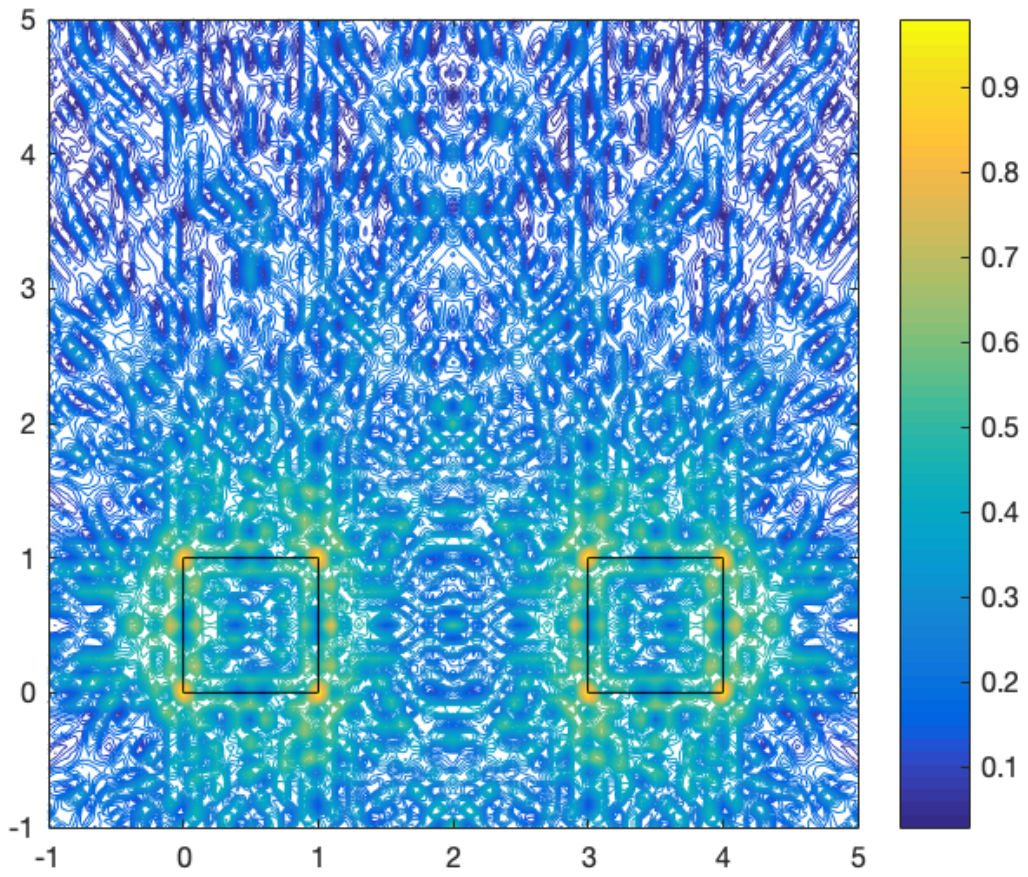}}
  \subfigure[\textbf{${\bf z_0} = (2,2,0)^{\rm T}$.}]{
    \includegraphics[width=1.5in]{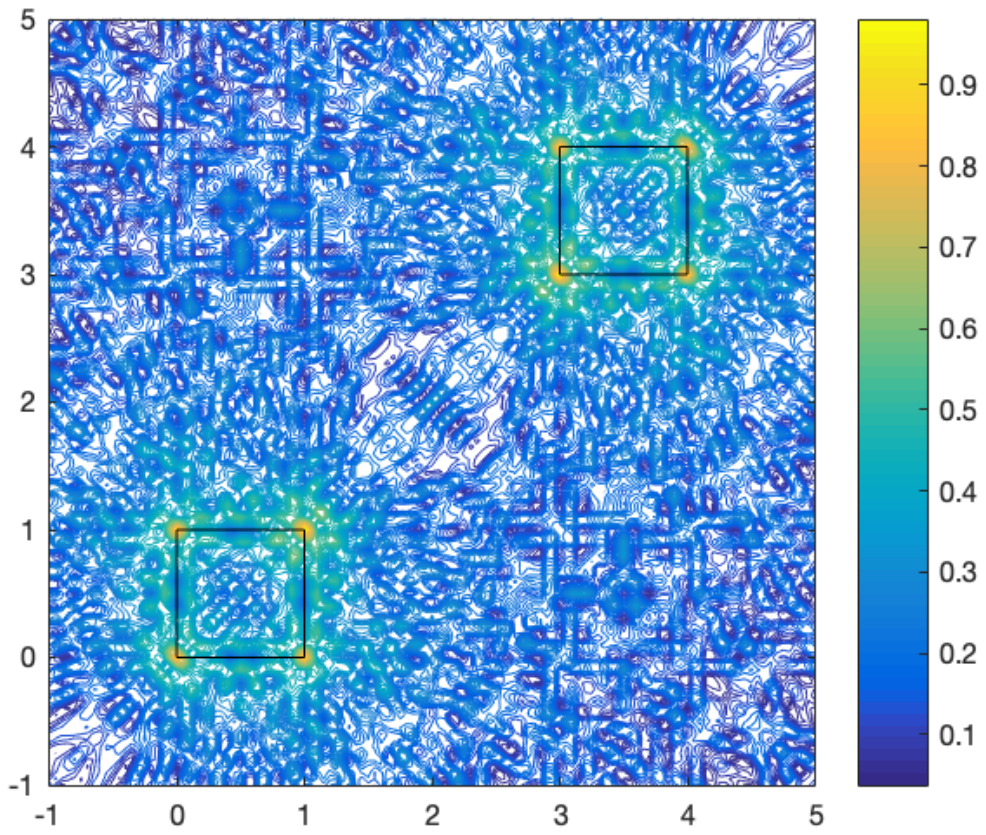}}
  \subfigure[\textbf{${\bf z_0} = (4,4,0)^{\rm T}$.}]{
    \includegraphics[width=1.5in]{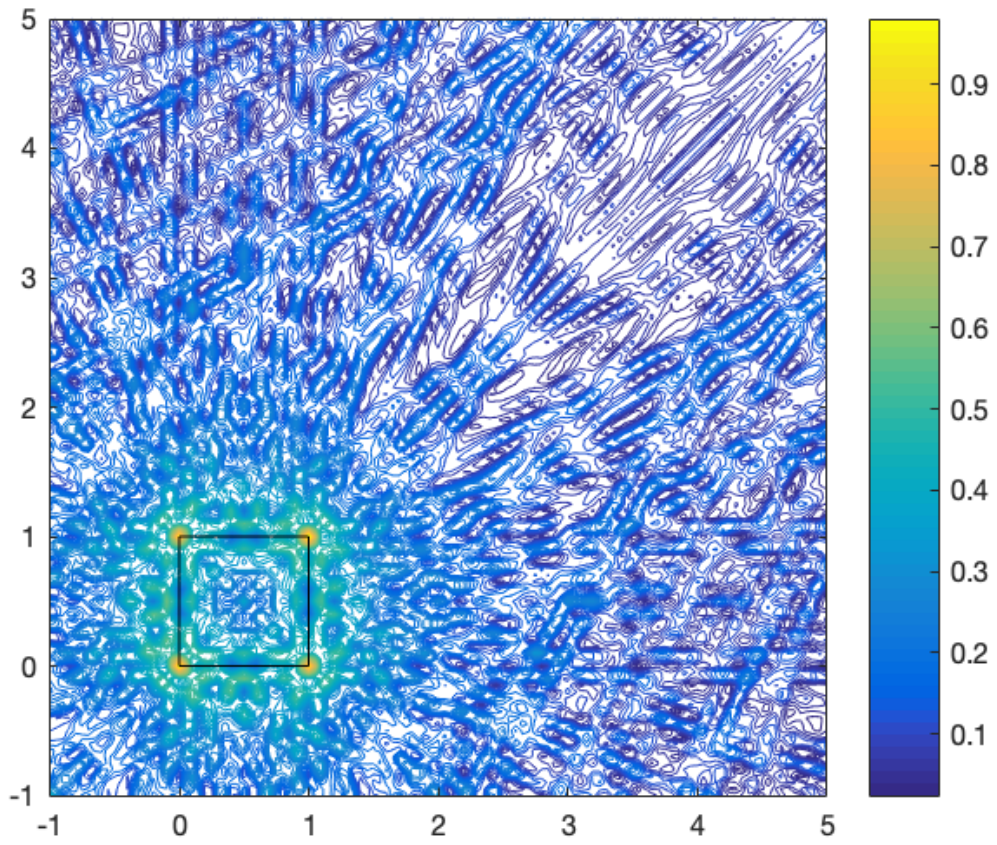}}
\caption{{\bf Strip Reconstruction Scheme Two} with different  $\bf {z_0}$ and twenty observation directions for cubic support (${\bf S_1}$).}
\label{square4}
\end{figure}

\subsection{{\bf Strip Reconstruction Scheme Two} for ball support (${\bf S_2}$)}
This example gives the results for ball support with three reference points ${\bf z_0} = (0,2,0)^{\rm T}, {\bf z_0}  = (2,2,0)^{\rm T}$ and ${\bf z_0}  = (4,4,0)^{\rm T}$.  Fig. \ref{ball2}, Fig. \ref{ball3} and Fig. \ref{ball4} give the indicators for one, two and twenty observation directions, respectively.

\begin{figure}[htbp]
  \centering
  \subfigure[\textbf{${\bf z_0} = (0,2,0)^{\rm T}$.}]{
    \includegraphics[width=1.5in, height=1.3in]{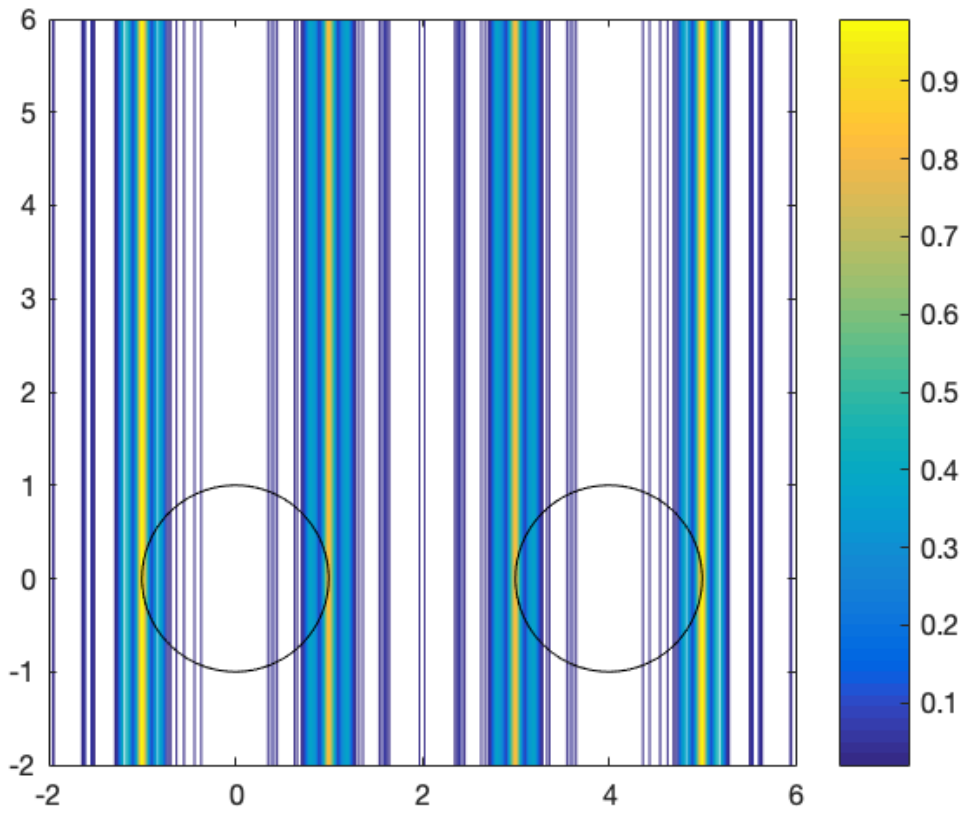}}
  \subfigure[\textbf{${\bf z_0} = (2,2,0)^{\rm T}$.}]{
    \includegraphics[width=1.5in]{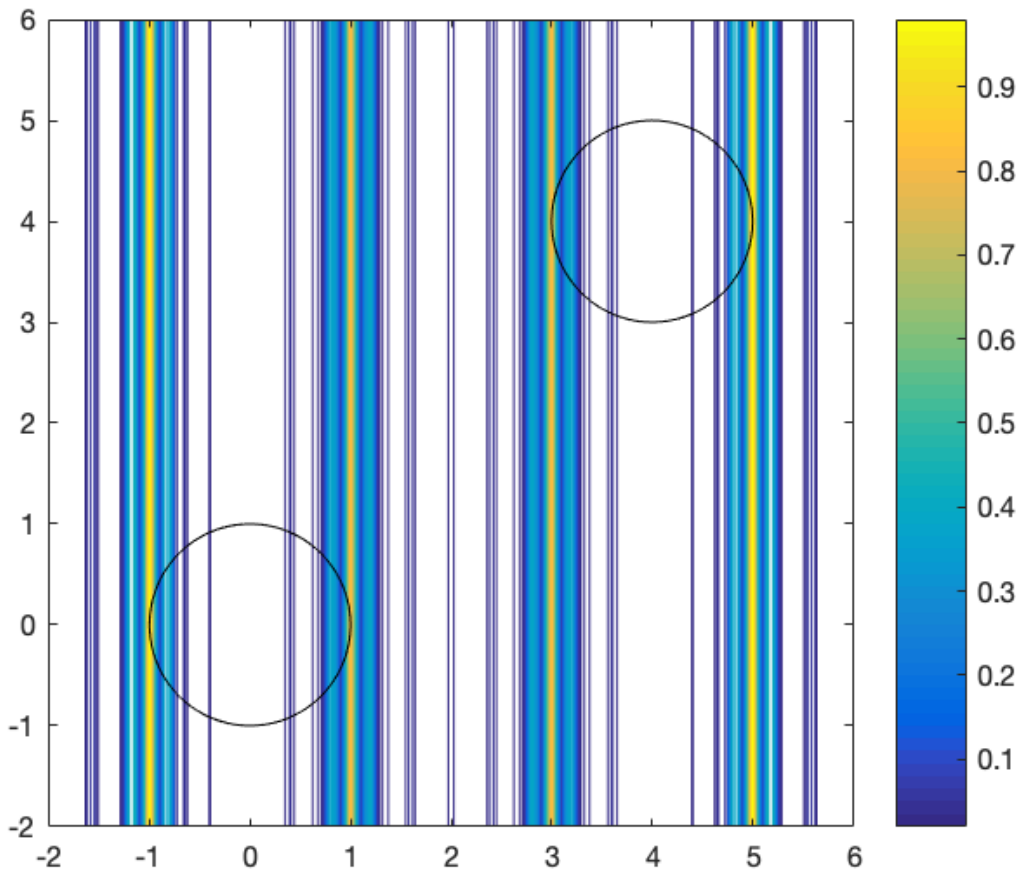}}
  \subfigure[\textbf{${\bf z_0} = (4,4,0)^{\rm T}$.}]{
    \includegraphics[width=1.5in]{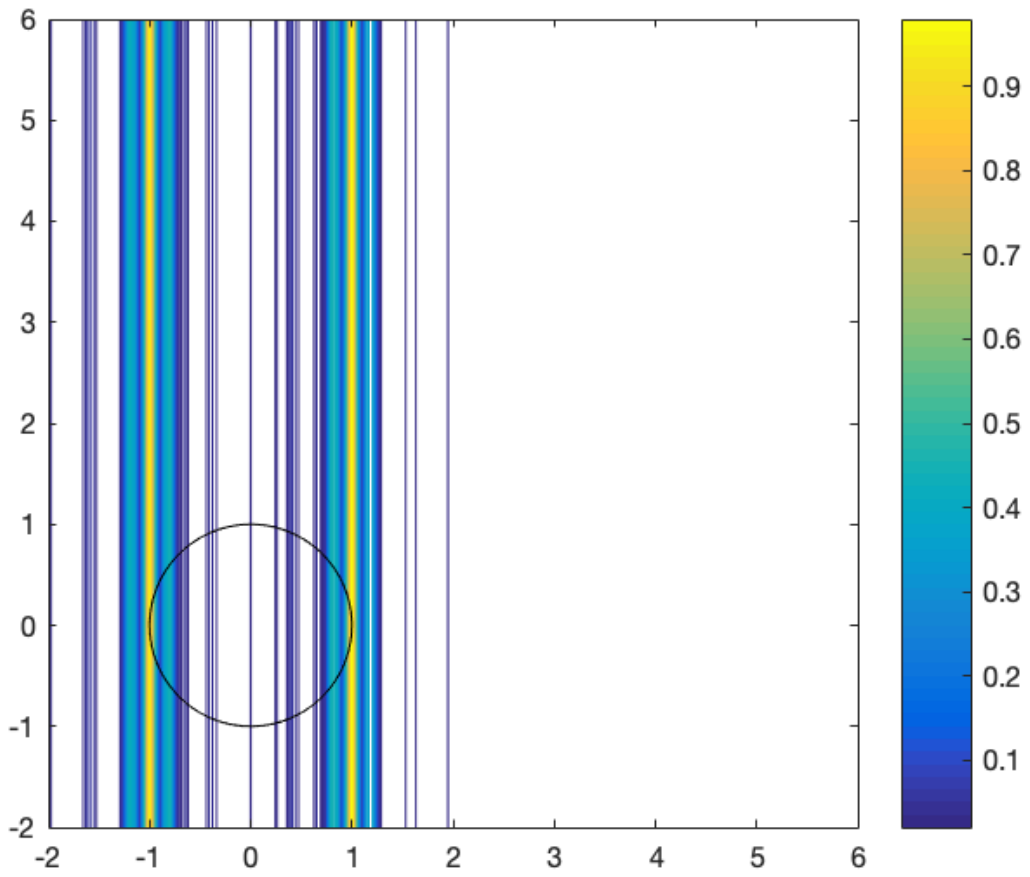}}
\caption{{\bf Strip Reconstruction Scheme Two} with different  $\bf {z_0}$ and one observation direction for ball support (${\bf S_2}$).}
\label{ball2}
\end{figure}

\begin{figure}[htbp]
  \centering
  \subfigure[\textbf{${\bf z_0} = (0,2,0)^{\rm T}$.}]{
    \includegraphics[width=1.5in, height=1.3in]{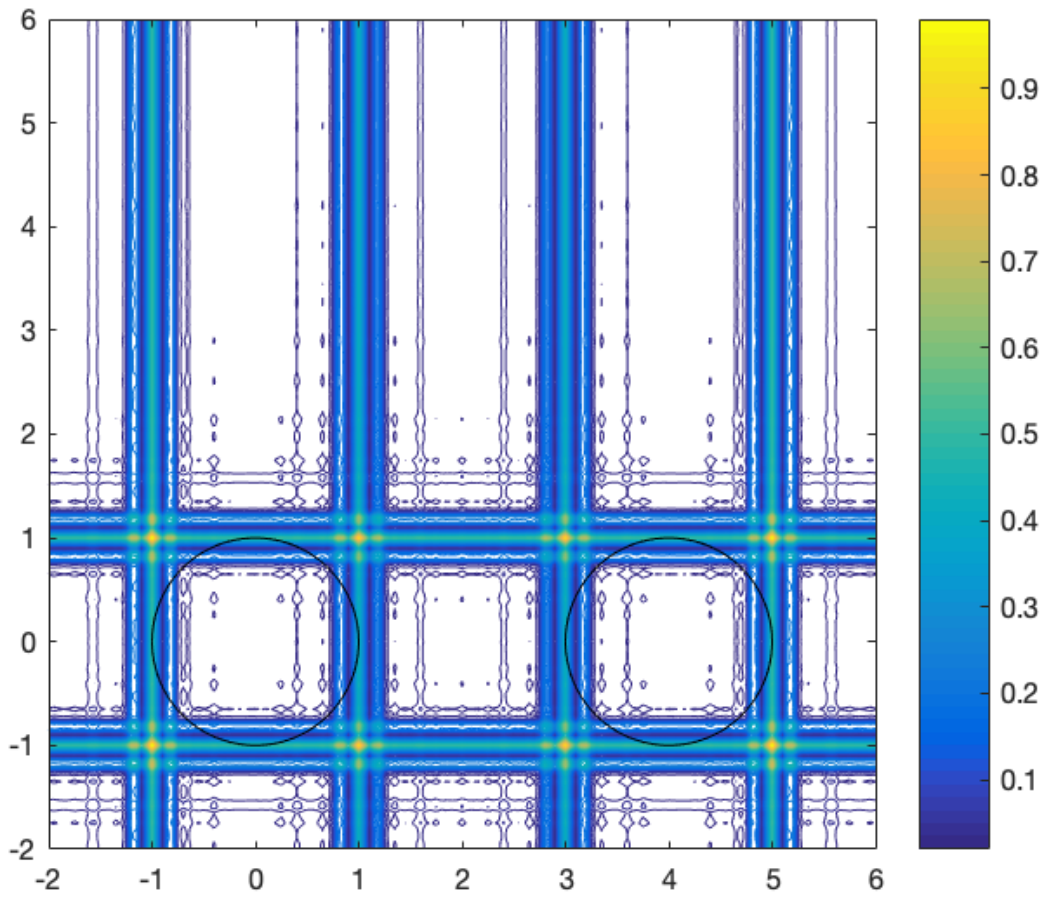}}
  \subfigure[\textbf{${\bf z_0} = (2,2,0)^{\rm T}$.}]{
    \includegraphics[width=1.5in]{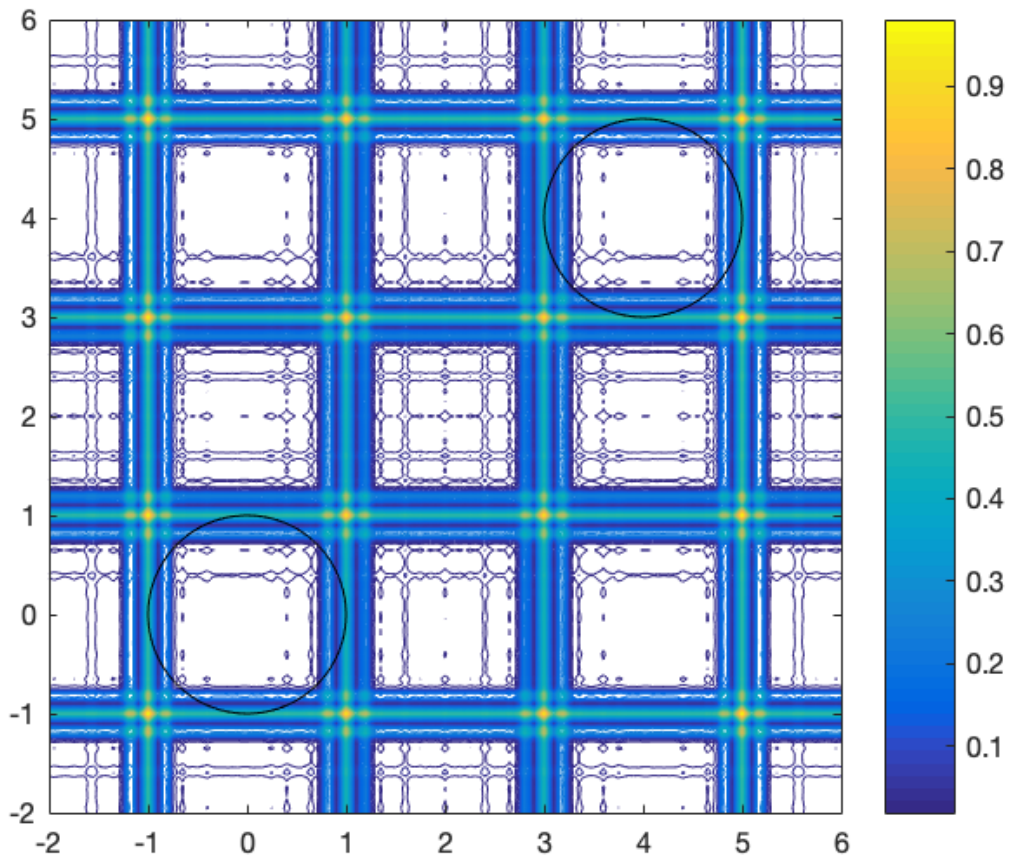}}
  \subfigure[\textbf{${\bf z_0} = (4,4,0)^{\rm T}$.}]{
    \includegraphics[width=1.5in]{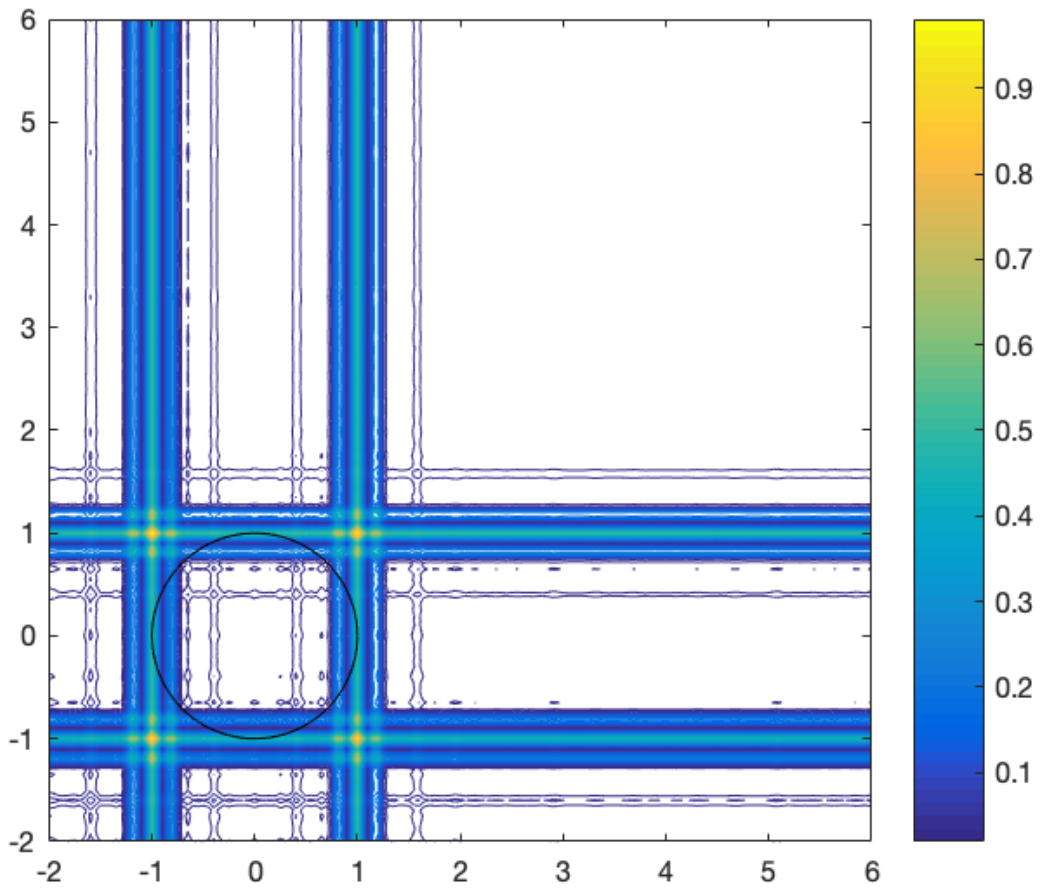}}
\caption{{\bf Strip Reconstruction Scheme Two} with different  $\bf {z_0}$ and two observation directions for ball support (${\bf S_2}$).}
\label{ball3}
\end{figure}

\begin{figure}[htbp]
  \centering
  \subfigure[\textbf{${\bf z_0} = (0,2,0)^{\rm T}$.}]{
    \includegraphics[width=1.5in, height=1.3in]{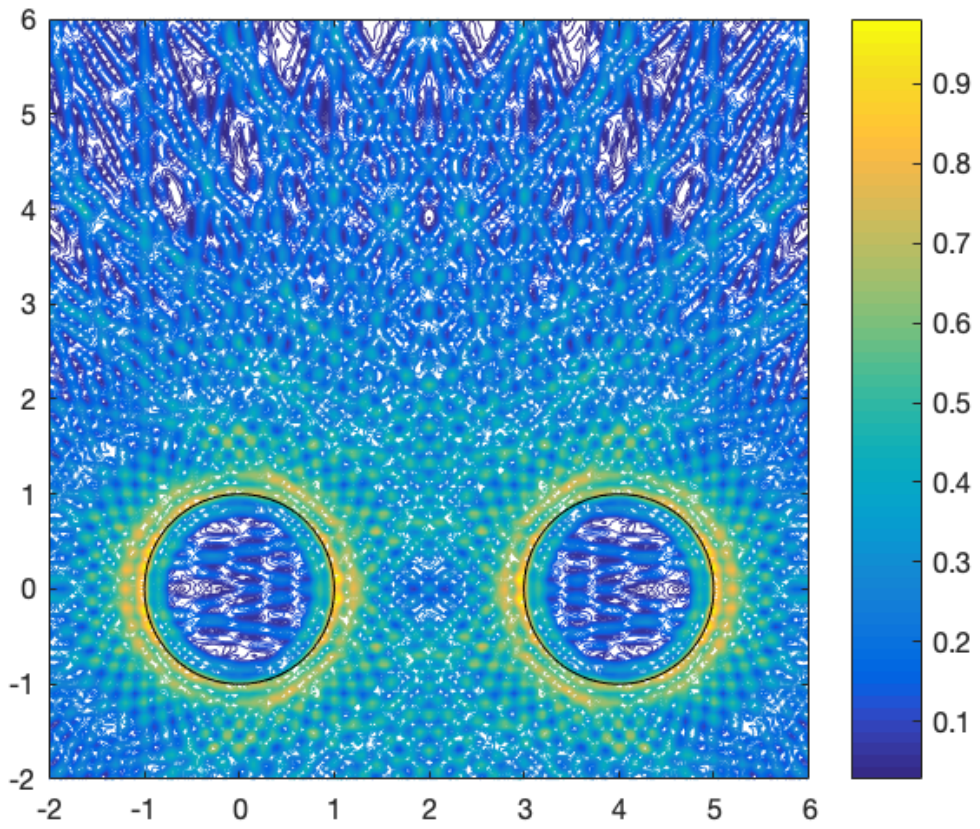}}
  \subfigure[\textbf{${\bf z_0} = (2,2,0)^{\rm T}$.}]{
    \includegraphics[width=1.5in]{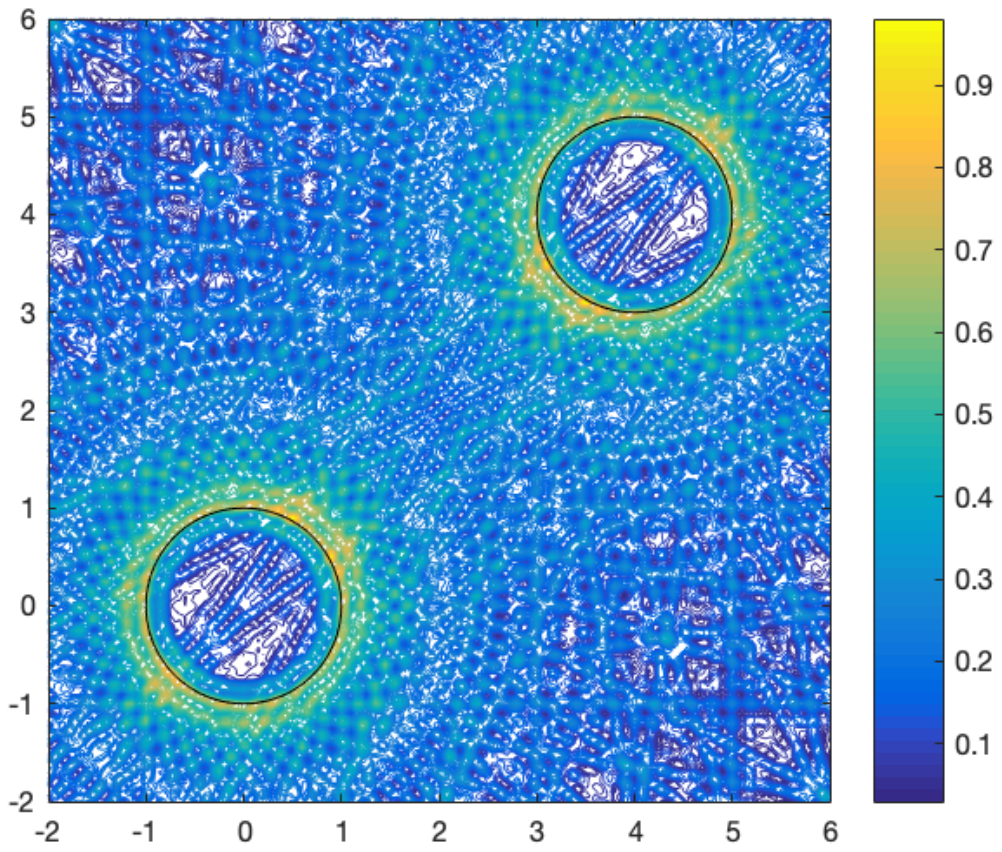}}
  \subfigure[\textbf{${\bf z_0} = (4,4,0)^{\rm T}$.}]{
    \includegraphics[width=1.5in]{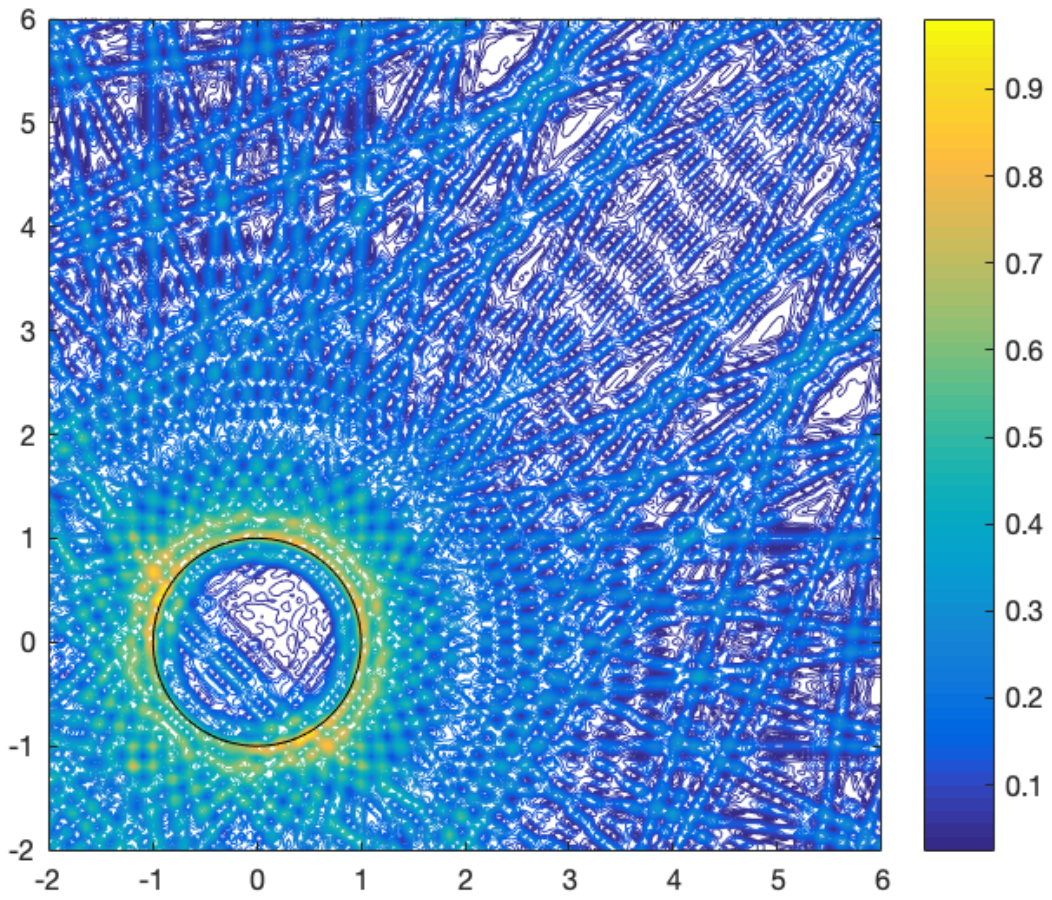}}
\caption{{\bf Strip Reconstruction Scheme Two} with different  $\bf {z_0}$ and twenty observation directions for ball support (${\bf S_2}$).}
\label{ball4}
\end{figure}

\subsection{The validity of the phase retrieval scheme}
This example is designed to check the phase retrieval scheme
proposed in the previous section. The underlying scatterer is cubic.
In Figures \ref{error1} and \ref{error2}, we compare the phase retrieval data with the exact one, the real part of $ \bf m\cdot\bE^{\infty}(\mathbf{\hx}_0,k;\mathbf{J})$
at a fixed direction $\bf\hx_0=(1,0,0)^{\rm T}$ is given. We observe that our phase retrieval scheme is very robust to noise.

\begin{figure}[htbp]
  \centering
  \subfigure[\textbf{ No noise.}]{
    \includegraphics[height=1.3in,width=1.5in]{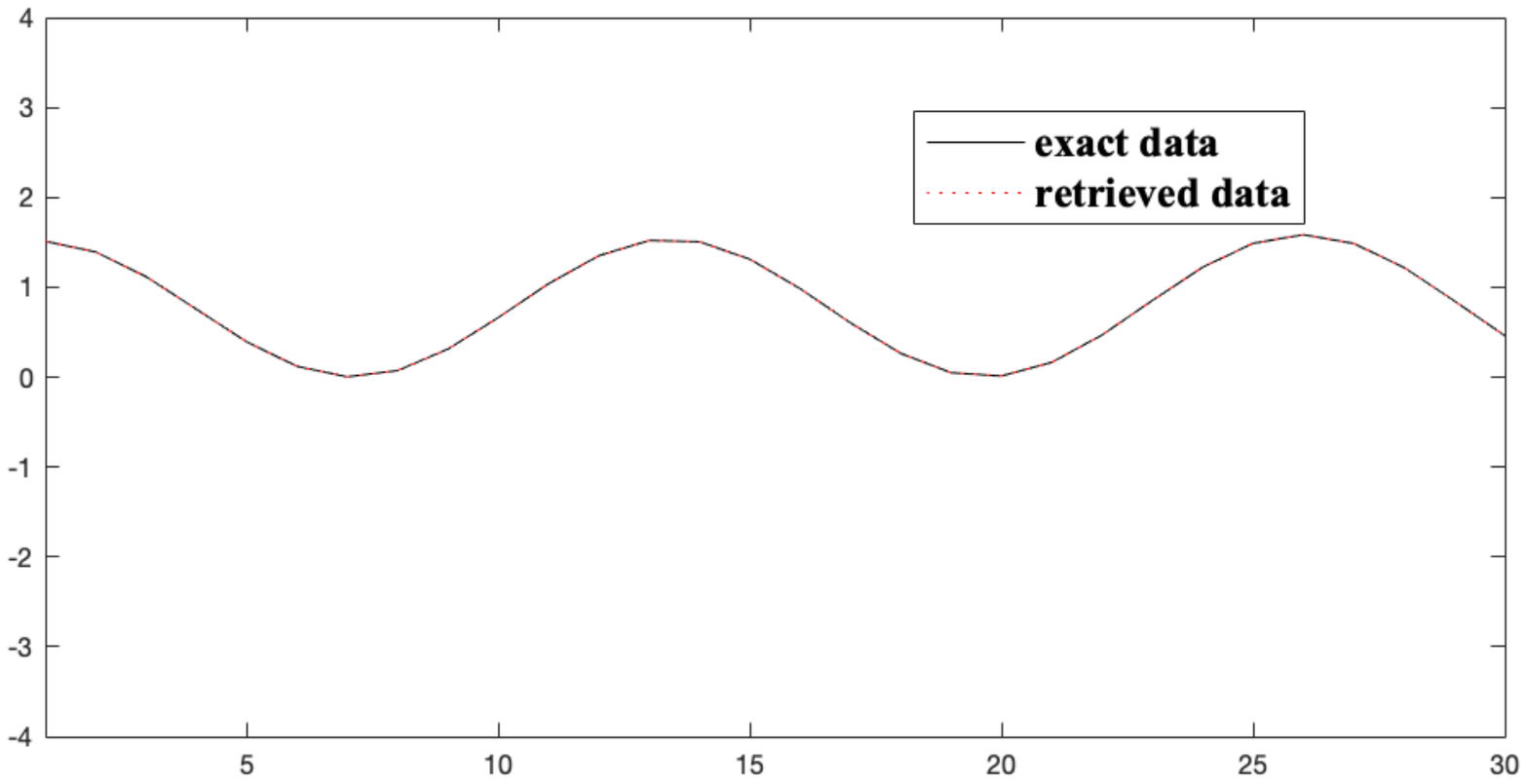}}
  \subfigure[\textbf{$10\%$ noise.}]{
    \includegraphics[height=1.3in,width=1.5in]{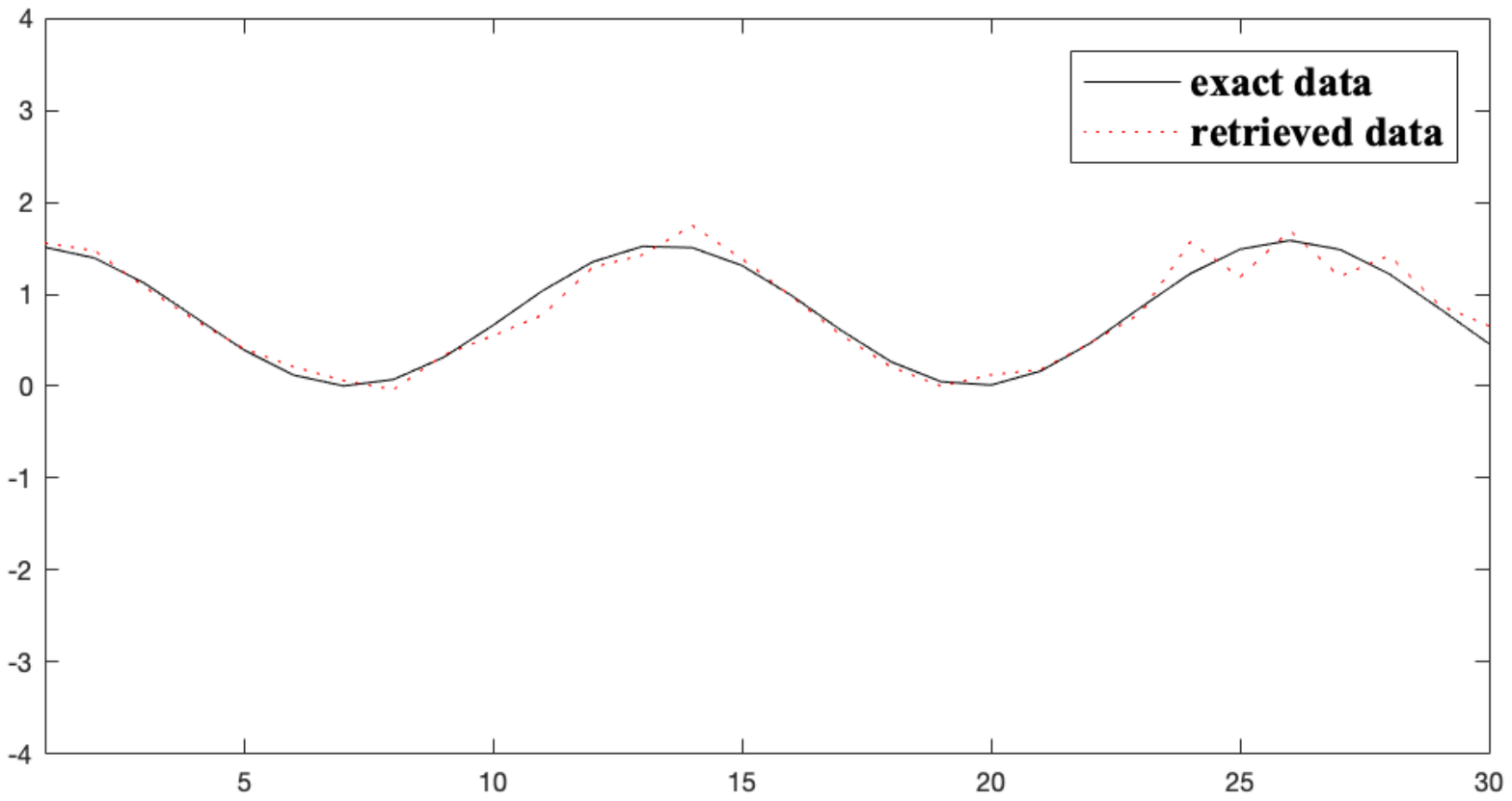}}
  \subfigure[\textbf{$30\%$ noise.}]{
    \includegraphics[height=1.3in,width=1.5in]{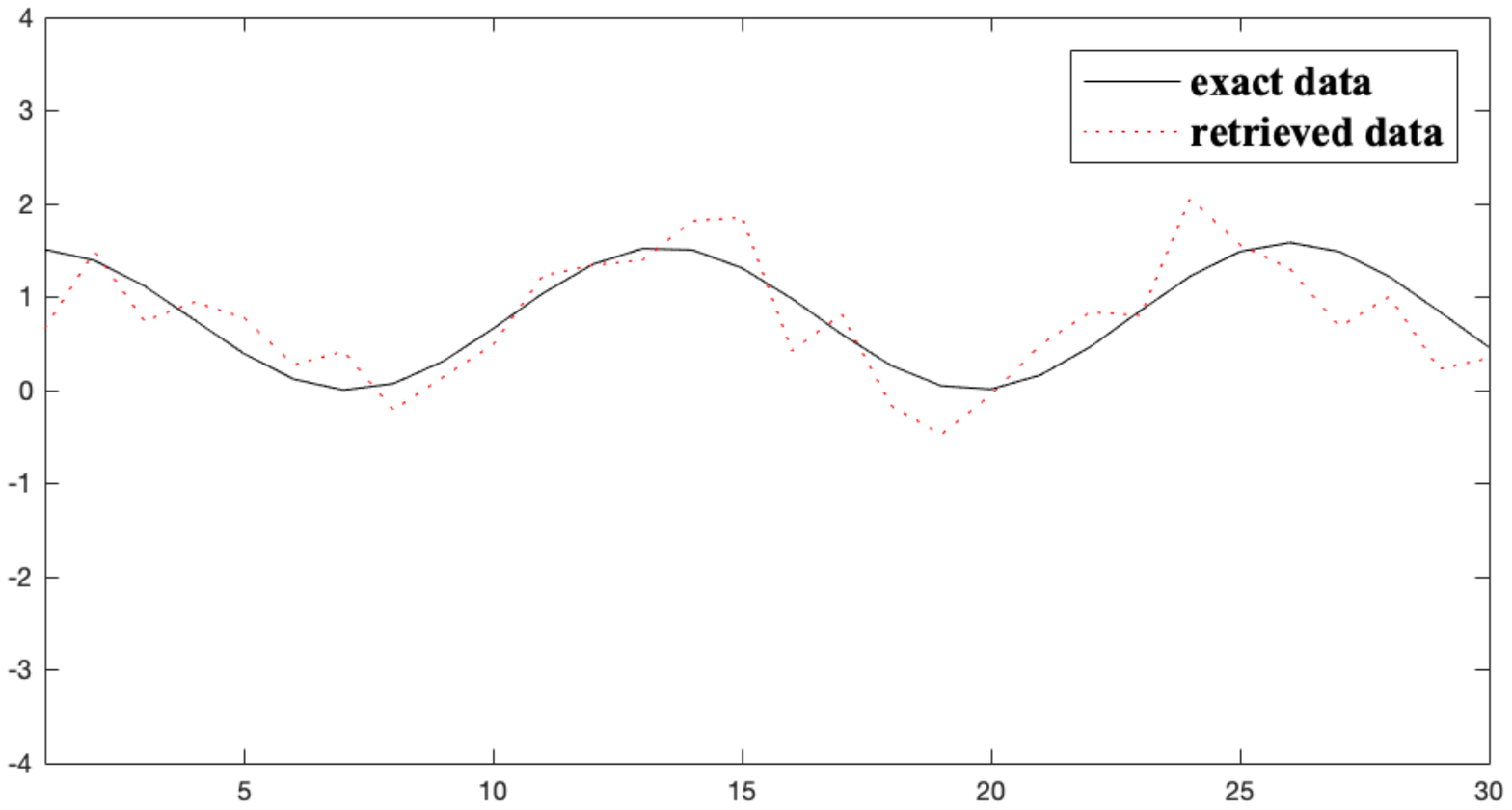}}
\caption{{\bf Example PhaseRetrieval.}\, Phase retrieval for the real part  with relative errors at a fixed direction $\bf\hx_0=(1,0,0)^{\rm T}$.}
\label{error1}
\end{figure}

\begin{figure}[htbp]
  \centering
  \subfigure[\textbf{No noise.}]{
    \includegraphics[height=1.3in,width=1.5in]{pics/PRerrorquare0.pdf}}
  \subfigure[\textbf{0.1 noise.}]{
    \includegraphics[height=1.3in,width=1.5in]{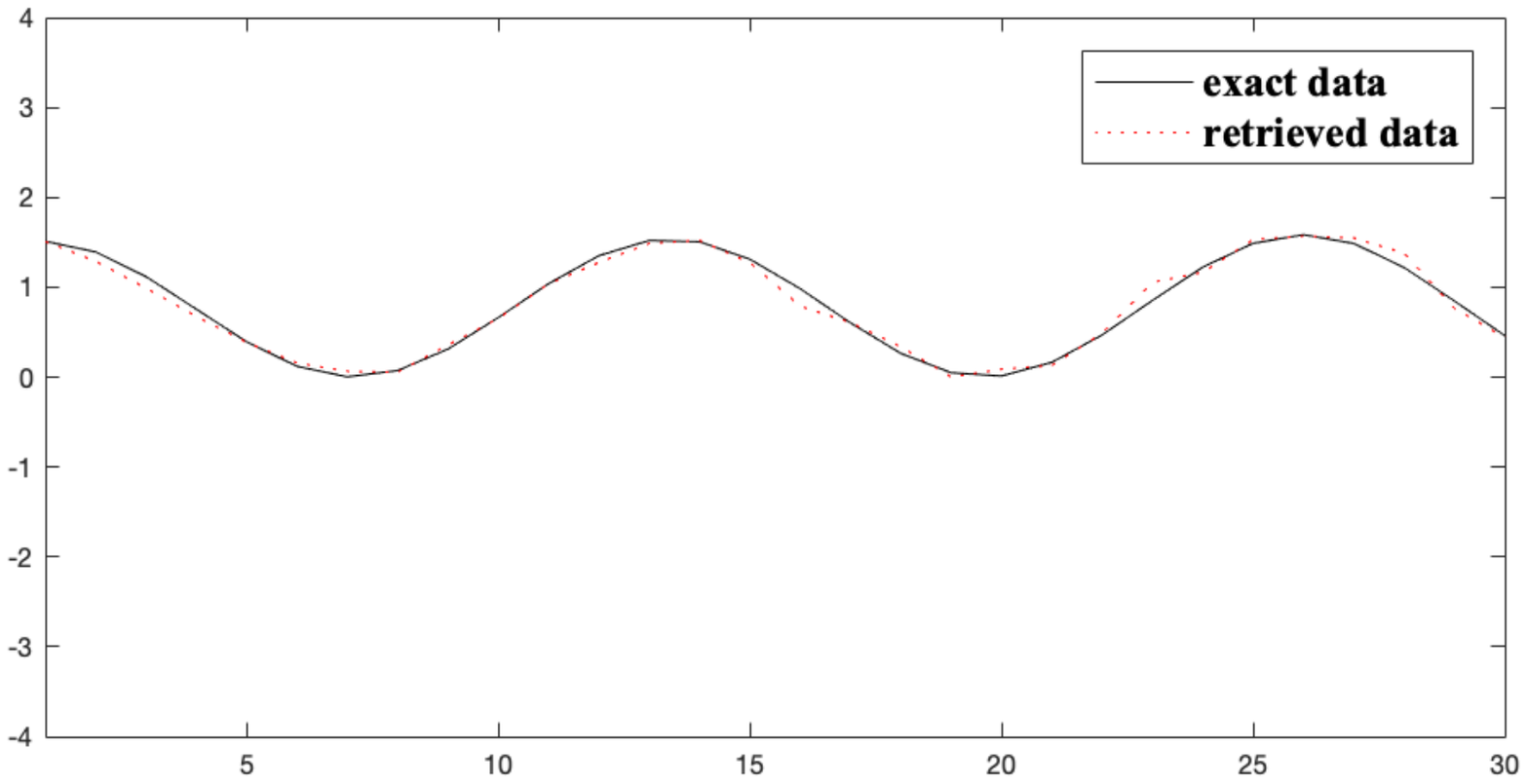}}
  \subfigure[\textbf{0.3 noise.}]{
    \includegraphics[height=1.3in,width=1.5in]{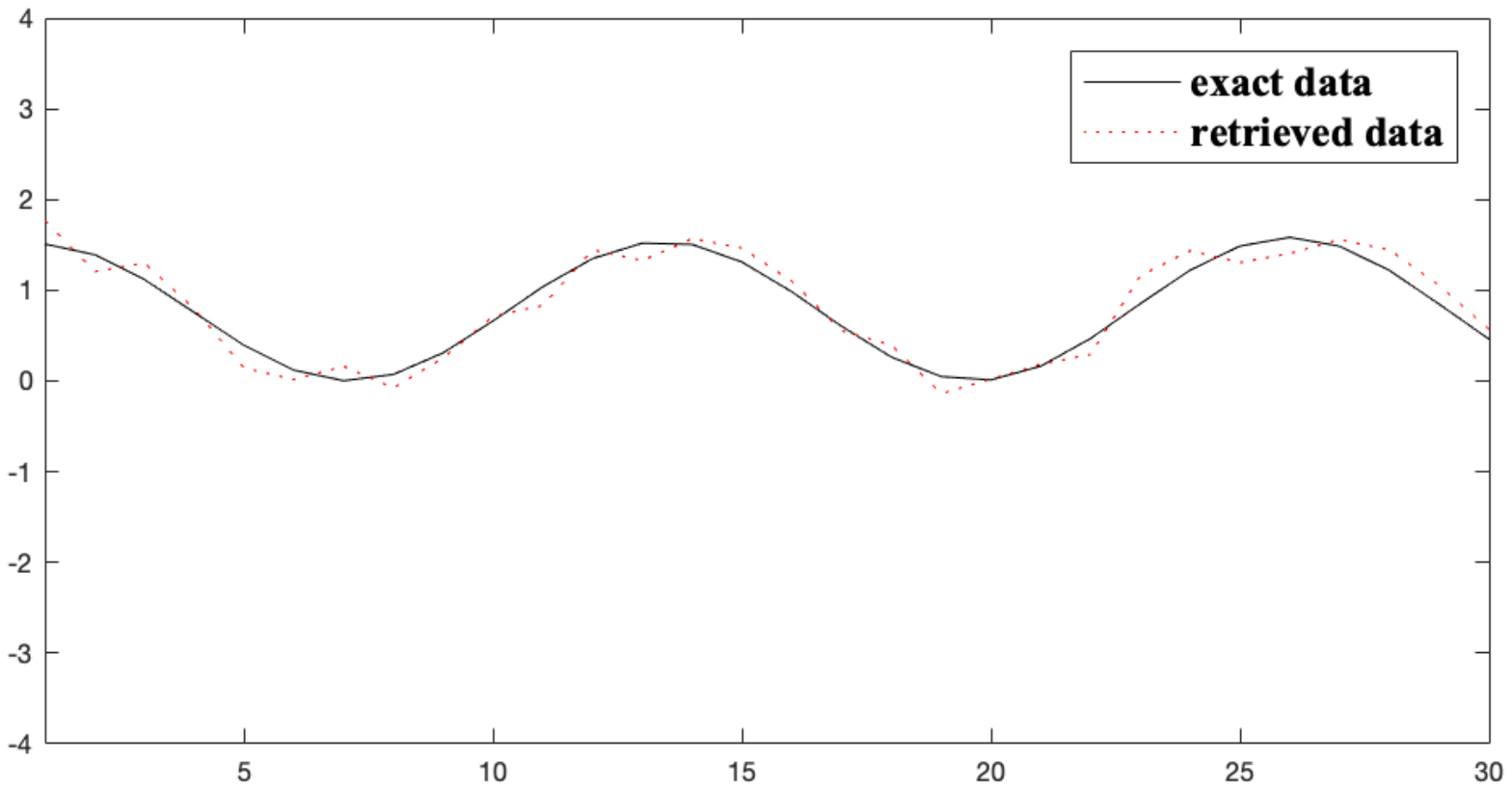}}
\caption{{\bf Example PhaseRetrieval.}\, Phase retrieval for the real part with absolute errors at a fixed direction $\bf\hx_0=(1,0,0)^{\rm T}$.}
\label{error2}
\end{figure}

\subsection{{\bf Strip Reconstruction Scheme Three} for cubic support (${\bf S_1}$)}
In this example, we consider the cubic reconstructions with $1, 2$ and $20$ observation directions. Fig. \ref{square5} gives the results, which is similar to the ones in Fig. \ref{square1}.

\begin{figure}[htbp]
  \centering
  \subfigure[\textbf{One observation direction.}]{
    \includegraphics[width=1.5in]{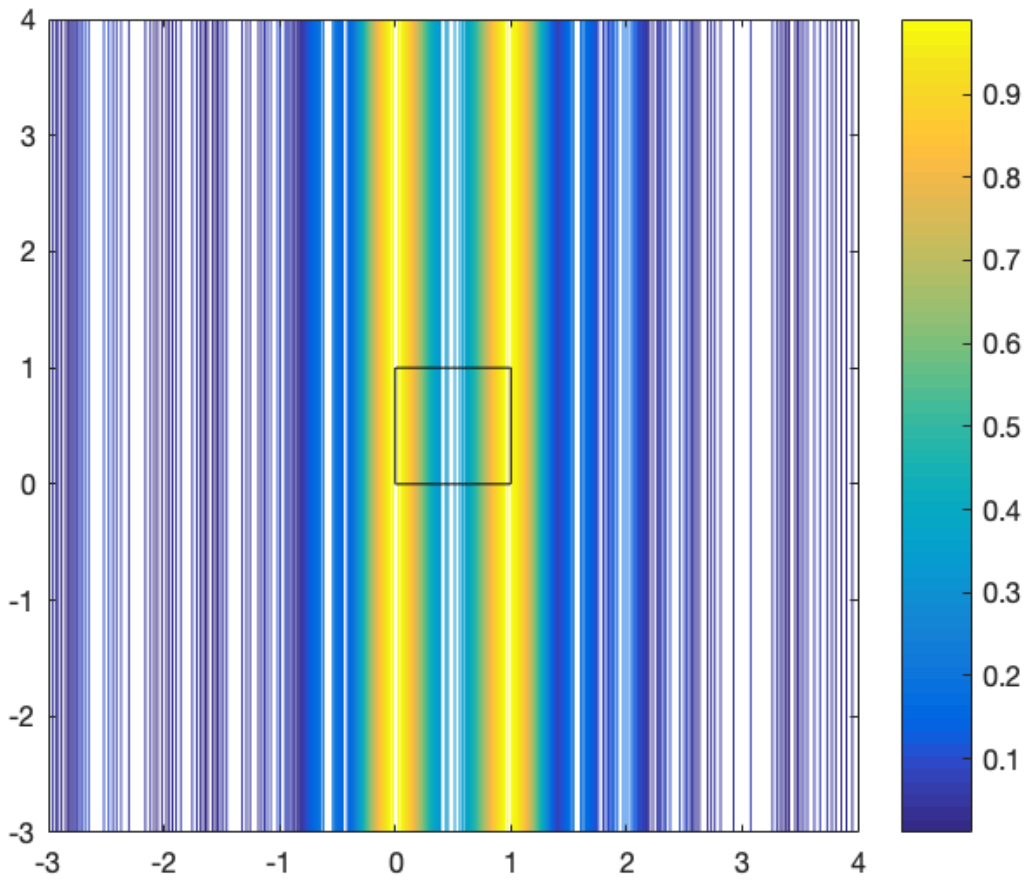}}
  \subfigure[\textbf{Two observation directions.}]{
    \includegraphics[width=1.5in]{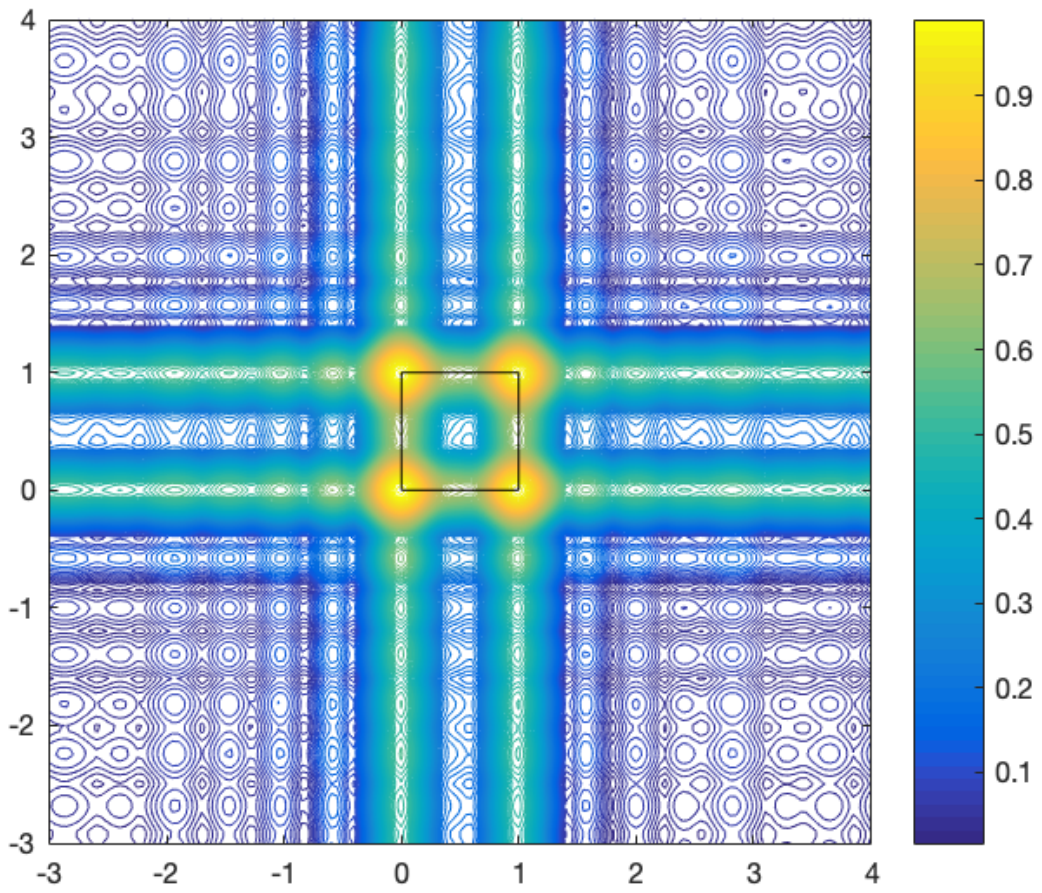}}
  \subfigure[\textbf{Twenty observation directions.}]{
    \includegraphics[width=1.5in]{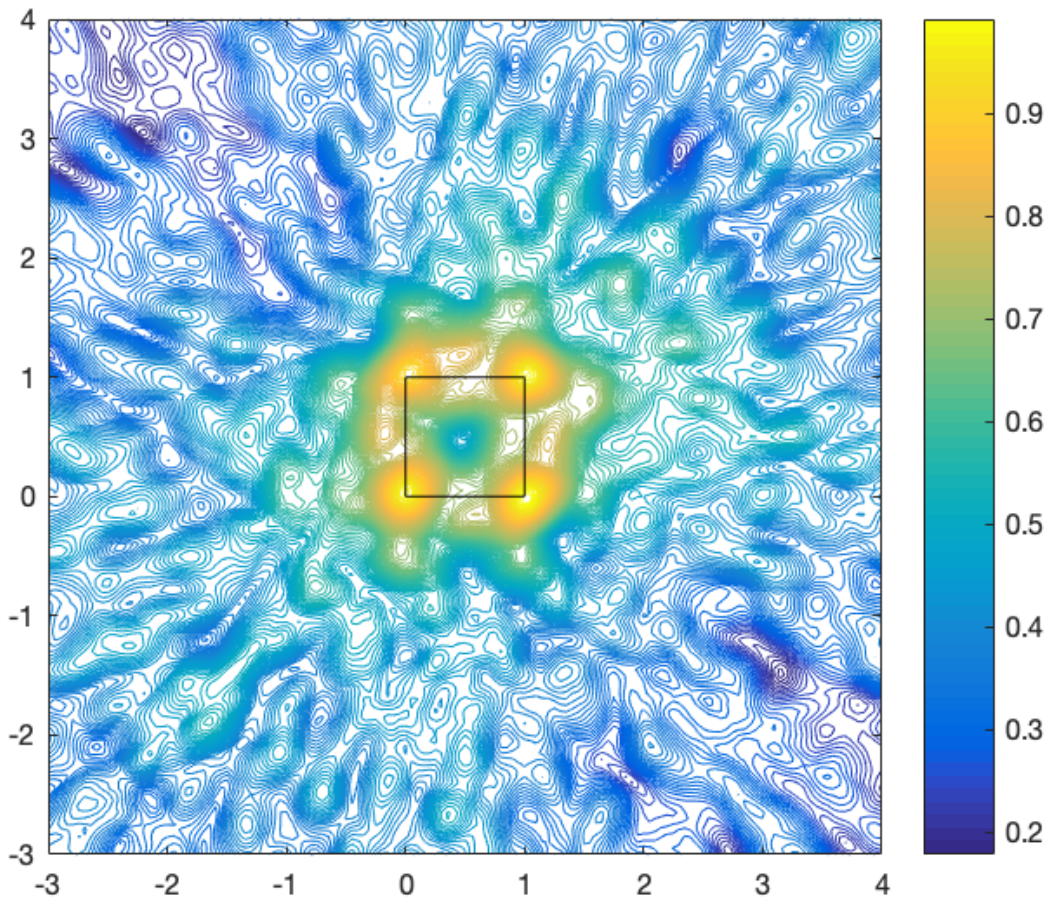}}
\caption{{\bf Strip Reconstruction Scheme Three} with different observation directions for cubic support (${\bf S_1}$).}
\label{square5}
\end{figure}

\subsection{{\bf Strip Reconstruction Scheme Three} for ball support (${\bf S_2}$)}
In this example, we consider the ball reconstruction with $1, 2$ and $20$ observation directions. The phenomenon is similar to the corresponding example in {\bf Strip Reconstruction Scheme One} . We give the results in Fig. \ref{ball5}. The support is
clearly reconstructed in Fig. \ref{ball5}(c).

\begin{figure}[htbp]
  \centering
  \subfigure[\textbf{One observation direction.}]{
    \includegraphics[width=1.5in]{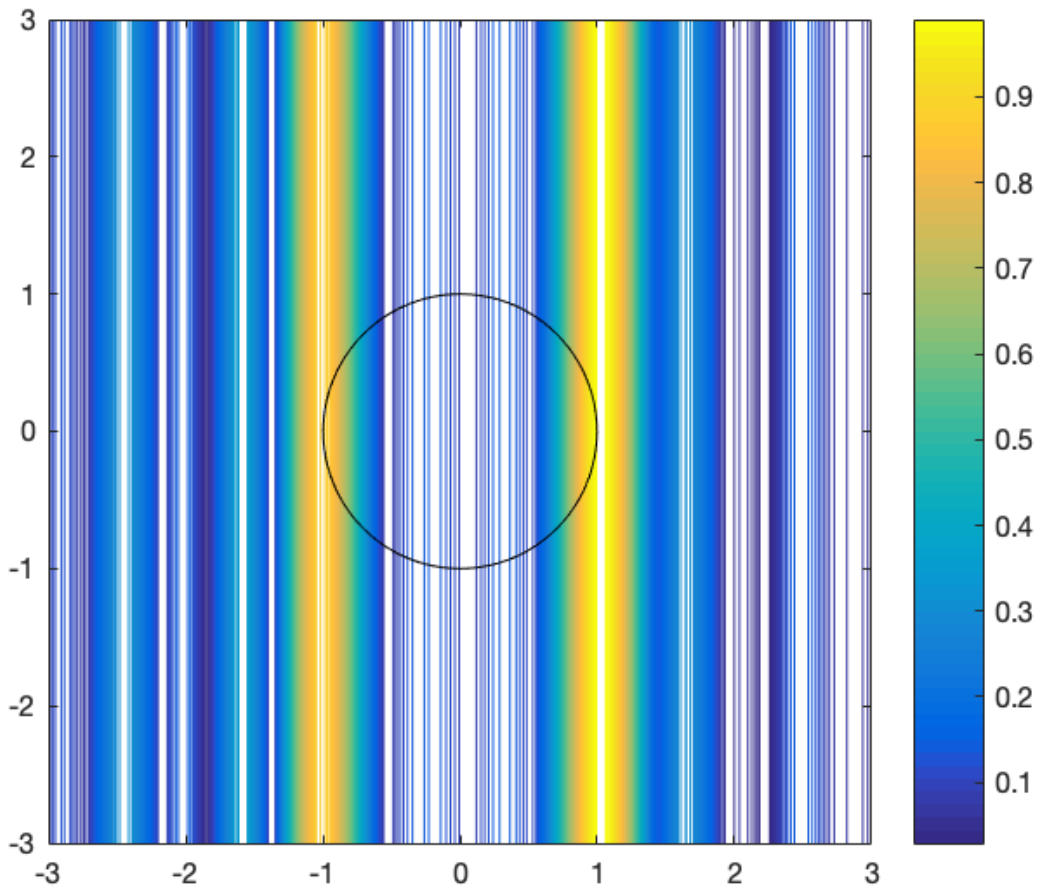}}
  \subfigure[\textbf{Two observation directions.}]{
    \includegraphics[width=1.5in]{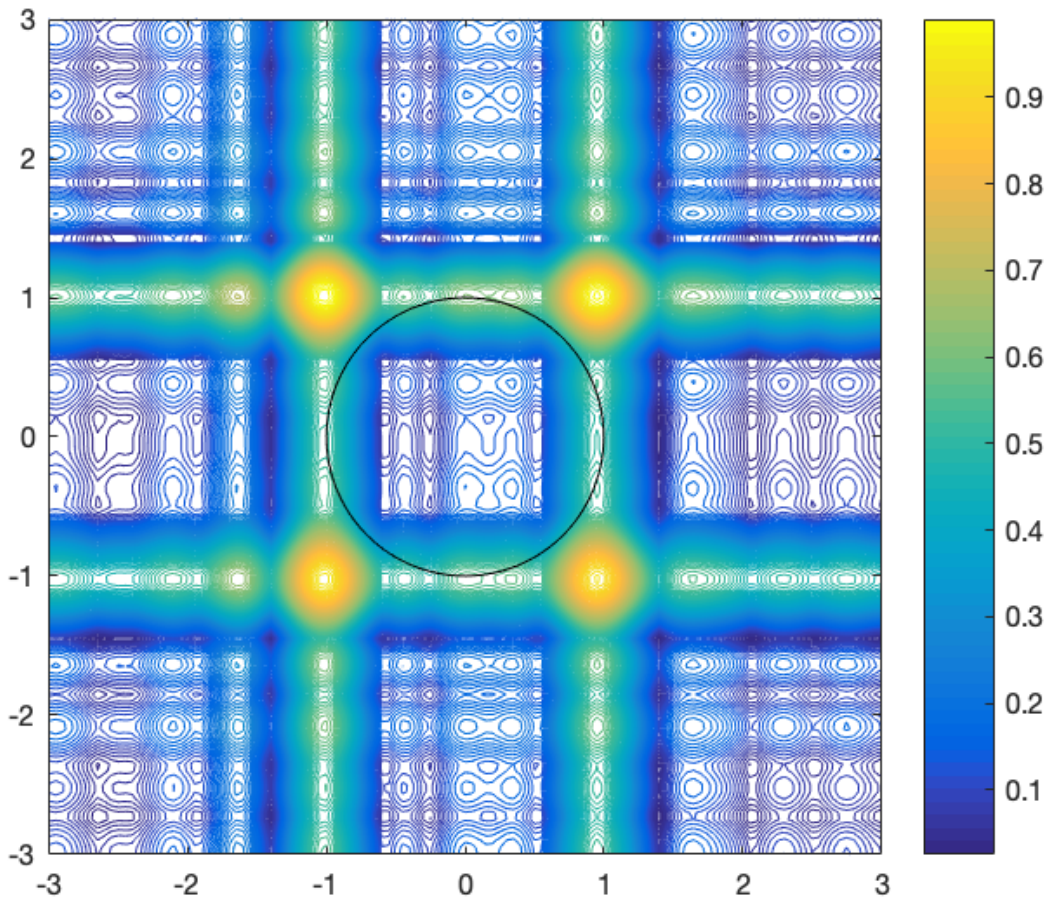}}
  \subfigure[\textbf{Twenty observation directions.}]{
    \includegraphics[width=1.5in]{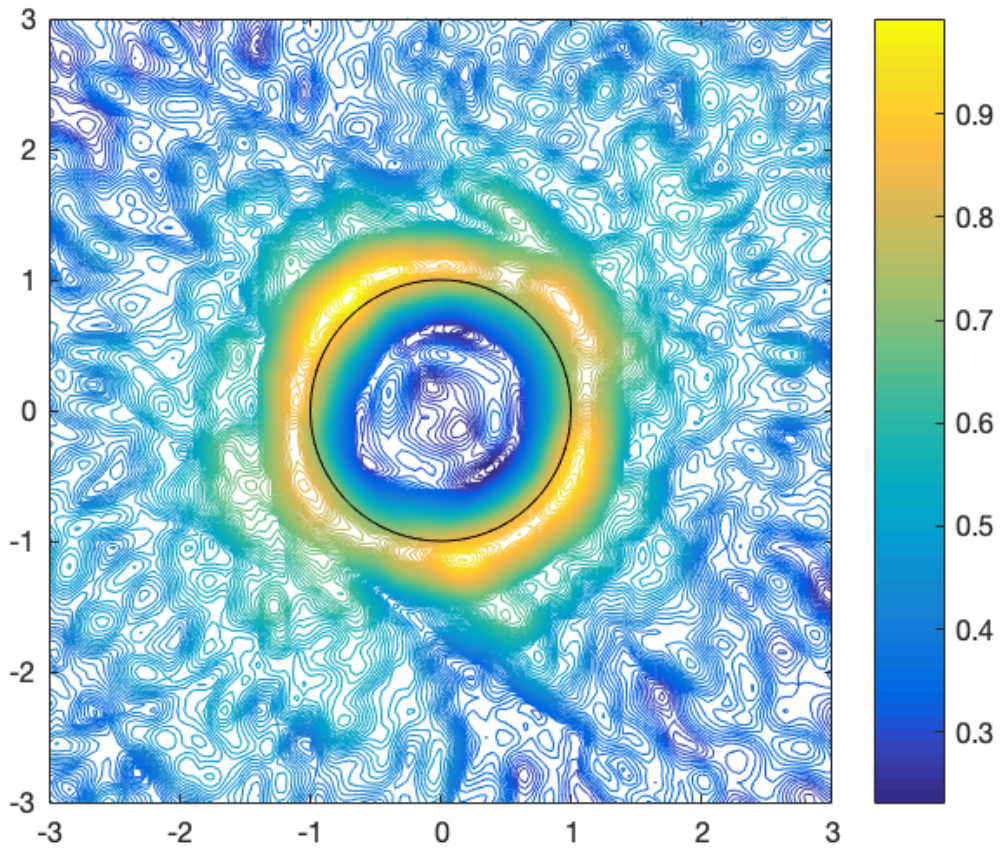}}
\caption{{\bf Strip Reconstruction Scheme Three} with different observation directions for ball support (${\bf S_2}$).}
\label{ball5}
\end{figure}

\subsection{{\bf Strip Reconstruction Scheme Three} for other supports (${\bf S_3}, {\bf S_4}, {\bf S_5}$)}
In this example, we consider $20$ observations for the supports ${\bf S_3}, {\bf S_4}$ and ${\bf S_5}$. In Fig. \ref{S3S42} (a), the cubic and ball are both well constructed. The Fig. \ref{S3S42} (b) gives the L-shaped domain. In Fig. \ref{S52}, we give the ${\bf x-y}$ projection and ${\bf y-z}$ projection of the cuboid.

\begin{figure}[htbp]
  \centering
  \subfigure[\textbf{Cubic+ball.}]{
    \includegraphics[width=1.5in]{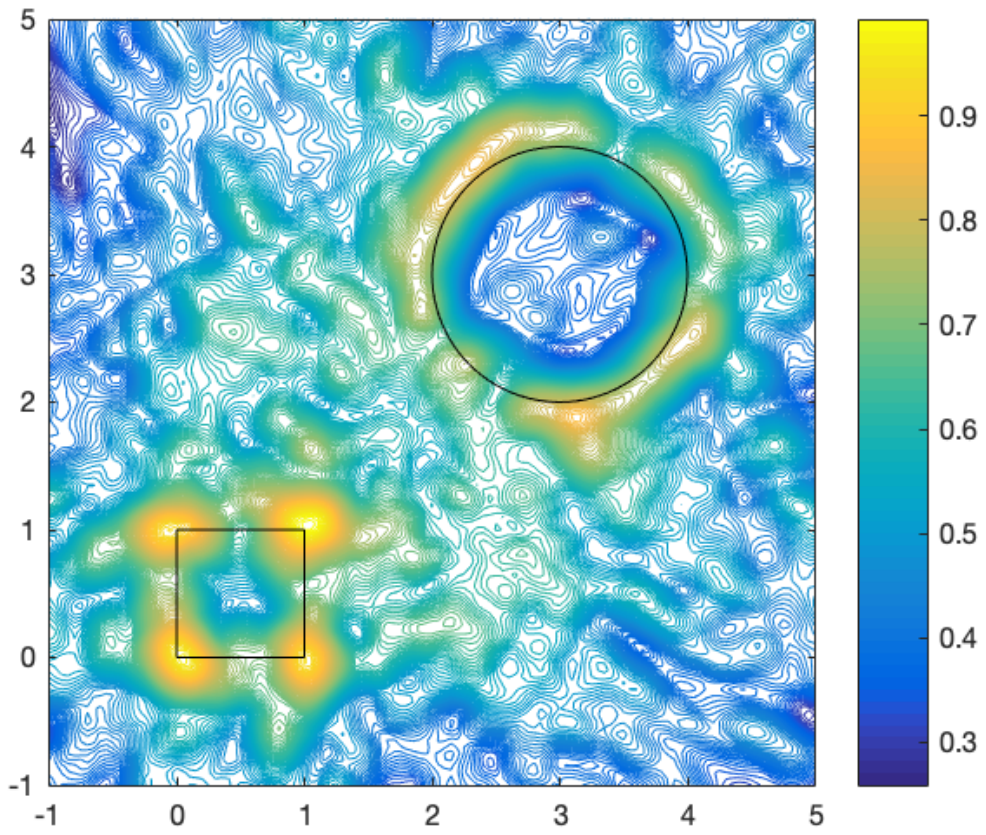}}
  \subfigure[\textbf{L-shaped domain.}]{
    \includegraphics[width=1.5in]{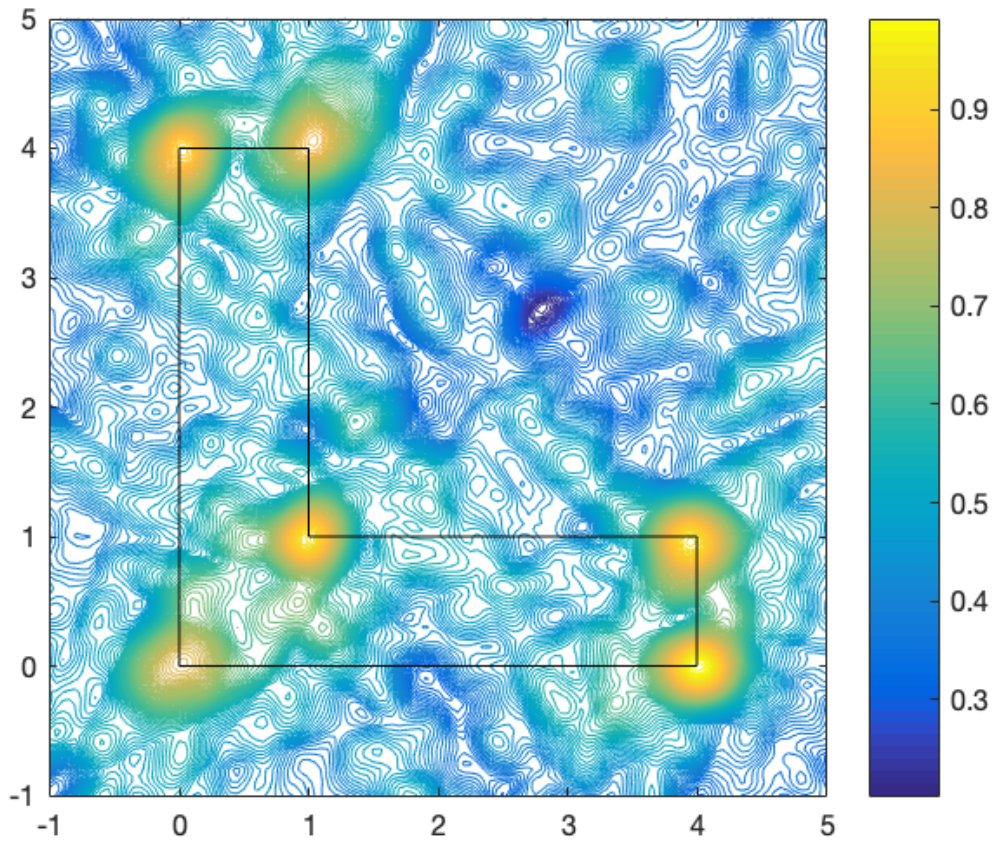}}
\caption{{\bf Strip Reconstruction Scheme Three} with $20$ observation directions for supports ${\bf S_3}$ and ${\bf S_4}$.}
\label{S3S42}
\end{figure}

\begin{figure}[htbp]
  \centering
  \subfigure[\textbf{$x-y$ projection.}]{
    \includegraphics[width=1.5in]{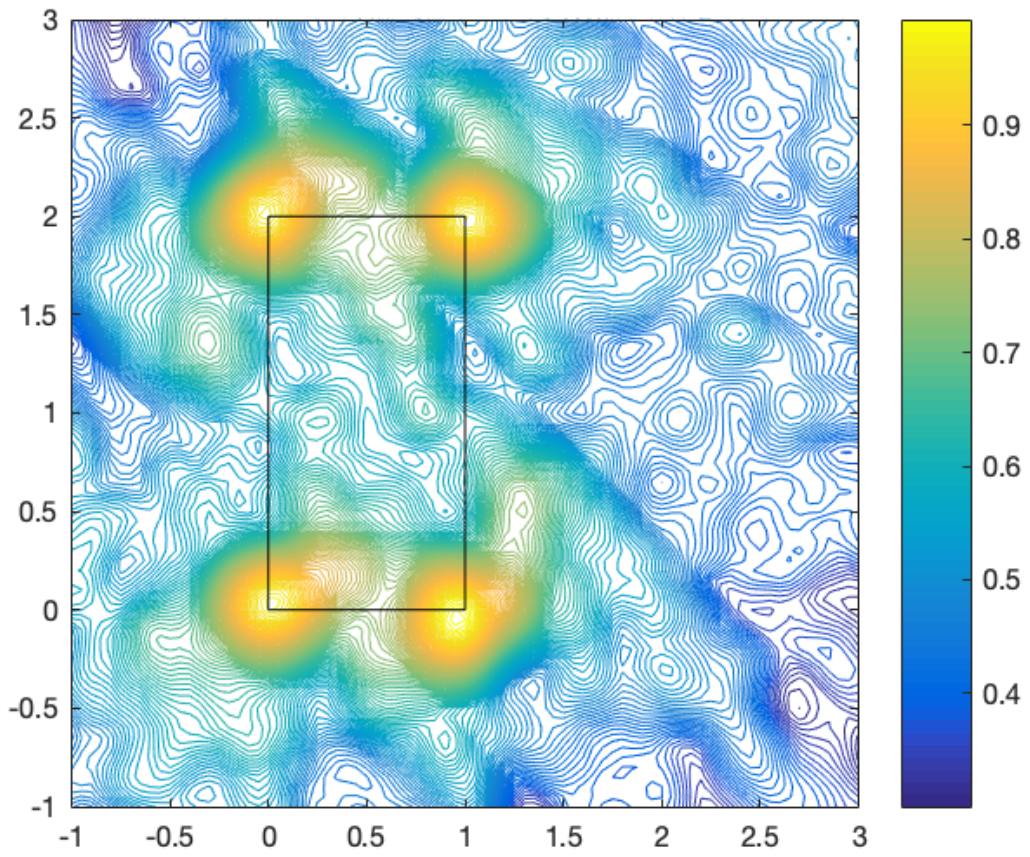}}
  \subfigure[\textbf{$y-z$ projection.}]{
    \includegraphics[width=1.5in]{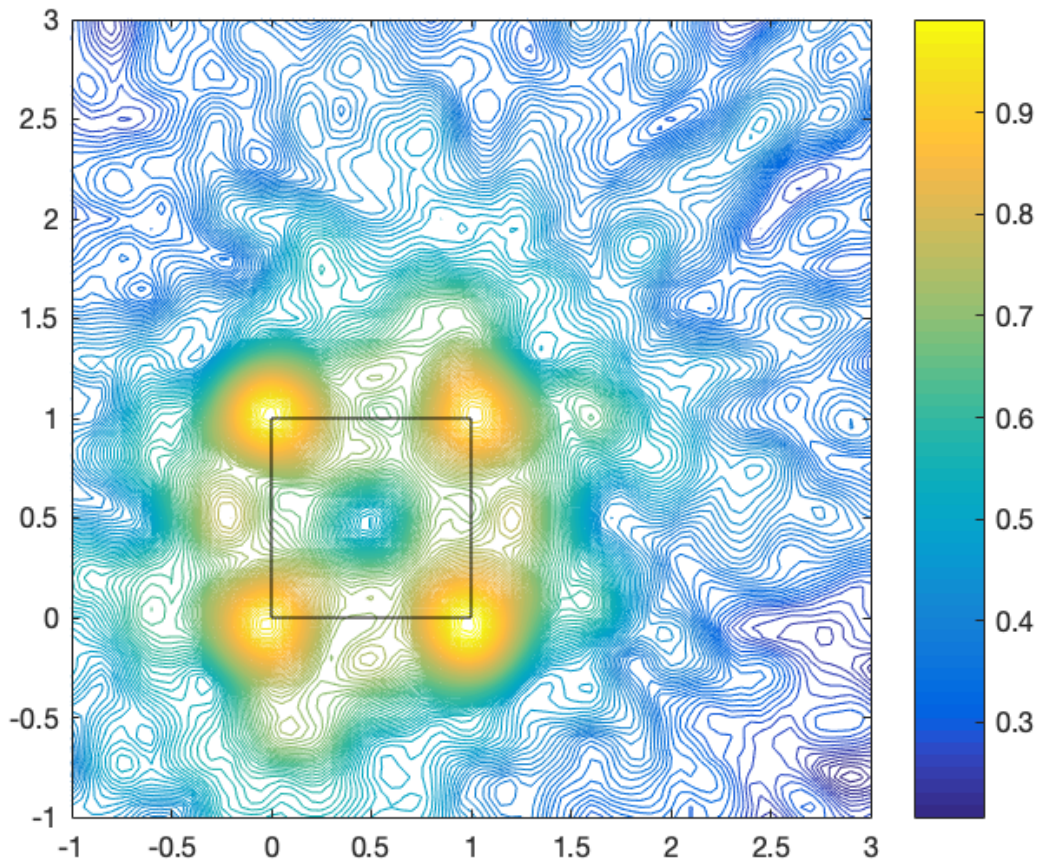}}
\caption{{\bf Strip Reconstruction Scheme Three} with $20$ observation directions for cuboid support ${\bf S_5}$.}
\label{S52}
\end{figure}

\section*{Acknowledgement}
The research of X. Ji is partially supported by the NNSF of China with Grant Nos. 11271018 and 91630313,
and National Centre for Mathematics and Interdisciplinary Sciences, CAS.
The research of X. Liu is supported by the NNSF of China under grant 11571355 and the Youth Innovation Promotion Association, CAS.

\end{document}